\documentstyle[10pt,amscd,xypic,amssymb,combelow,color,enumitem]{amsart}

\xyoption{all}
\CompileMatrices

\emergencystretch=2cm

\makeatletter
\@addtoreset{equation}{section}
\makeatother

\newenvironment{proof}{{\noindent \textbf{Proof}\,\,}}{\hspace*{\fill}$\Box$\medskip}

\newtheorem{theorem}[subsection]{Theorem}

\newtheorem{proposition}[subsection]{Proposition}

\theoremstyle{definition}

\newtheorem{definition}[subsection]{Definition}

\theoremstyle{remark}

\newtheorem{claim}[subsection]{Claim}

\newtheorem{remark}[subsection]{Remark}

\def\loccitt{\emph{loc. cit.}}
\def\loccit{\emph{loc. cit. }}

\def\fa{{\mathfrak{a}}}

\def\fp{{\mathfrak{p}}}

\def\fsl{{\mathfrak{sl}}}
\def\fgl{{\mathfrak{gl}}}

\def\hgl{{\widehat{\fgl}}}

\def\fC{{\mathfrak{C}}}

\def\fW{{\mathfrak{W}}}
\def\fZ{{\mathfrak{Z}}}

\def\BC{{\mathbb{C}}}

\def\BN{{\mathbb{N}}}
\def\BF{{\mathbb{F}}}
\def\BP{{\mathbb{P}}}

\def\BQ{{\mathbb{Q}}}
\def\BZ{{\mathbb{Z}}}

\def\CA{{\mathcal{A}}}

\def\CE{{\mathcal{E}}}
\def\CF{{\mathcal{F}}}

\def\CL{{\mathcal{L}}}
\def\CM{{\mathcal{M}}}
\def\CN{{\mathcal{N}}}
\def\CO{{\mathcal{O}}}

\def\CV{{\mathcal{V}}}

\def\tCF{\widetilde{\CF}}

\def\oZ{\overline{Z}}
\def\ofZ{\overline{\fZ}}

\def\Hom{\textrm{Hom}}

\def\col{\textrm{col }}
\def\ecol{\emph{col }}

\def\vs{\varsigma}

\def\pt{\textrm{pt}}
\def\and{\textrm{ }\&\textrm{ }}

\def\sym{\textrm{Sym}}

\def\oP{\overline{P}}
\def\op{\overline{p}}

\def\ofZ{\overline{\fZ}}
\def\tCF{\widetilde{\CF}}

\def\sym{\textrm{Sym}}

\def\nn{{{\BN}}^n}

\def\su{{U_q(\dot{\fsl}_n)}}

\def\uui{{U_q(\dot{\fgl}_1)}}

\def\uu{{U_q(\dot{\fgl}_n)}}

\def\uup{{U_q^+(\dot{\fgl}_n)}}
\def\uum{{U_q^-(\dot{\fgl}_n)}}

\def\UU{{U_{q,\oq}(\ddot{\fgl}_n)}}

\def\UUp{{U^+_{q, \oq}(\ddot{\fgl}_n)}}

\def\bd{{\mathbf{d}}}

\def\bk{{\mathbf{k}}}

\def\bu{{\mathbf{u}}}
\def\bs{{\boldsymbol{\vs}}}

\def\la{{\lambda}}
\def\sq{{\square}}
\def\bsq{{\blacksquare}}

\def\lamu{{\lambda \backslash \mu}}
\def\lanu{{\lambda \backslash \nu}}

\def\numu{{\nu \backslash \mu}}

\def\bla{{\boldsymbol{\la}}}
\def\bmu{{\boldsymbol{\mu}}}
\def\bnu{{\boldsymbol{\nu}}}

\def\blamu{{\boldsymbol{\lamu}}}
\def\blanu{{\boldsymbol{\lanu}}}

\def\bnumu{{\boldsymbol{\numu}}}

\def\bA{{\mathbf{A}}}
\def\bB{{\mathbf{B}}}

\def\bV{{\mathbf{V}}}
\def\bX{{\mathbf{X}}}
\def\bY{{\mathbf{Y}}}

\def\of{{\overline{f}}}

\def\syt{{\text{SYT}}}
\def\asyt{{\text{ASYT}}}

\def\oeta{\overline{\eta}}

\def\oQ{{\overline{Q}}}
\def\oq{{\overline{q}}}

\def\bari{\bar{i}}
\def\barj{\bar{j}}
\def\bark{\bar{k}}

\begin{document}

\title[Affine Laumon spaces and a conjecture of Kuznetsov]{\Large{\textbf{Affine Laumon spaces and a conjecture of Kuznetsov}}}

\author[Andrei Negu\cb t]{Andrei Negu\cb t}
\address{MIT, Department of Mathematics, Cambridge, MA, USA}
\address{Simion Stoilow Institute of Mathematics, Bucharest, Romania}
\email{andrei.negut@@gmail.com}

\maketitle

\begin{abstract}

We prove a conjecture of Kuznetsov stating that the equivariant $K$--theory of affine Laumon spaces is the universal Verma module for the quantum affine algebra $\uu$. We do so by reinterpreting the action of the quantum toroidal algebra $\UU$ on the $K$--theory from \cite{T} in terms of the shuffle algebra studied in \cite{Tor}, which constructs an embedding $\uu \hookrightarrow \UU$.

\end{abstract}

\section{Introduction}

\noindent Laumon spaces for the group $GL_n$ parametrize flags of torsion-free sheaves on $\BP^1$:
\begin{equation}
\label{eqn:flag finite}
\CF_1 \subset ... \subset \CF_{n-1} \subset \CO_{\BP^1}^{\oplus n}
\end{equation}
whose fibers near $\infty \subset \BP^1$ match a fixed full flag of subspaces of $\BC^n$. Laumon spaces are disconnected, with components indexed by vectors $\bd = (d_1,...,d_{n-1}) \in \BN^{n-1}$ that keep track of the first Chern classes of the sheaves in \eqref{eqn:flag finite}. The component indexed by $\bd$ coincides with the space of framed degree $\bd$ quasimaps into the complete flag variety, hence the interest in these objects in representation theory. \\

\noindent We will denote quantum affine and quantum toroidal algebras by $\uu$, $\UU$, respectively (we use dots instead of the more customary hats in order to avoid double hats on our symbols). The equivariant $K$--theory groups of Laumon spaces have been studied and identified with the universal Verma module of $U_q(\fsl_n)$ in \cite{BF}. In the present paper, we study an ``affine" \footnote{The word ``affine" here does not refer to geometric properties of the spaces in question, but it refers to the fact that their cohomology/$K$--theory groups are controlled by $\hgl_n$ instead of $\fsl_n$} version of these spaces, denoted by:
$$
\CM = \bigsqcup_{\bd = (d_1,...,d_n) \in \nn} \CM_{\bd}
$$
whose definition we will recall in Subsection \ref{sub:affdef}. The reference \cite{BF} constructs an action of $\su \curvearrowright K = K_{\text{equiv}}(\CM)$ and recalls a conjecture of Kuznetsov that this action can be extended to $\uu \curvearrowright K$. Our main purpose is to prove this fact: \\

\begin{theorem}
\label{thm:main}

There is a geometric action of the affine quantum algebra $\uu$ on $K$, with the latter being isomorphic to the universal Verma module. \\

\end{theorem}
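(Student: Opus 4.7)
The plan is to exploit the geometric action of the quantum toroidal algebra $\UU$ on $K = K_{\text{equiv}}(\CM)$ constructed in \cite{T}, and to pull it back to a $\uu$--action along an algebra embedding $\uu \hookrightarrow \UU$ supplied by the shuffle algebra description of \cite{Tor}. Once the embedding is in place, restriction automatically produces a geometric action of $\uu$, and what remains is to identify the resulting module with the universal Verma module.

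The first step is the shuffle--theoretic construction of $\uu \hookrightarrow \UU$. The positive halves $\uup$ and $\UUp$ both admit realizations as algebras of symmetric rational functions in $n$ alphabets, endowed with a shuffle product and cut out by explicit pole and wheel conditions, those for the toroidal algebra being strictly weaker than those for the affine one. Using the results of \cite{Tor}, I would write the images of the Drinfeld generators $e_{i,k}$, $f_{i,k}$ and $\psi^{\pm}_{i,k}$ of $\uu$ as explicit symmetric rational functions inside $\UU$, and verify that they satisfy the Drinfeld new relations. Thanks to the shuffle formalism, the verification reduces to a finite list of identities between specific rational functions, so that it becomes a direct check rather than a conceptual one.

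Next I would transfer the embedding to geometry. The $\UU$--action on $K$ of \cite{T} is described by simple correspondences whose matrix coefficients in the torus fixed point basis are computable via equivariant localization, and the shuffle description provides a dictionary that expresses the matrix coefficients of any element of $\UUp$ or $\UUm$ in terms of its shuffle representative. Composing this dictionary with $\uu \hookrightarrow \UU$ gives closed--form formulas for the action of any Drinfeld generator of $\uu$, which is what ``geometric'' means for the induced action; along the way one also checks that the restriction to $\su \subset \uu$ recovers the action of \cite{BF}, providing a consistency test with the conjecture of Kuznetsov as phrased there.

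To identify $K$ with the universal Verma module $M$ of $\uu$, the torus fixed points on $\CM_\bd$, which are indexed by $n$--tuples of partitions of total size $\bd$, give an explicit basis of $K$ whose Cartan weights can be matched, component by component, against those of $M$; this forces the graded characters to agree. The class of the structure sheaf of $\CM_0$ is the natural candidate for a highest weight vector, being annihilated by the positive Drinfeld generators of $\uu$ and carrying the universal highest weight read off from the equivariant parameters of the framing. The character match together with the existence of such a vector forces $K \cong M$ as $\uu$--modules. The main obstacle throughout is the explicit identification and verification of the shuffle embedding $\uu \hookrightarrow \UU$ and its compatibility with the geometric operators of \cite{T}: this is precisely what the shuffle formalism of \cite{Tor} is designed to make tractable, and it is where the bulk of the technical work lies.
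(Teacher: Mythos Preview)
Your proposal is correct and follows essentially the same strategy as the paper: restrict the toroidal action of \cite{T} along the embedding $\uu \hookrightarrow \UU \cong \CA$ of \cite{Tor}, then identify $K$ with the universal Verma module by exhibiting a singular vector and matching graded dimensions against the fixed-point count. Two small corrections: the paper realizes the embedding via the \emph{root} generators $e_{[i;j)}, f_{[i;j)}$ (sent to the explicit shuffle elements $E_{[i;j)}=S_1^+$, $F_{[i;j)}=T_1^-$) rather than the Drinfeld-new generators $e_{i,k}, f_{i,k}$, and the class $|\emptyset\rangle \in K_{(0,\dots,0)}$ is a \emph{lowest} weight vector annihilated by the $f_{[i;j)}$'s (the $e$'s raise degree), so your ``highest weight / positive generators'' should be flipped.
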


\noindent Affine Laumon spaces appear naturally in mathematical physics, geometry and representation theory as semismall resolutions of singularities of Uhlenbeck spaces for $\dot{\fgl}_n$. In \cite{CM}, we use Theorem \ref{thm:main} to prove a conjecture of Braverman that relates the Nekrasov partition function of ${{\mathcal{N}}}=2$ supersymmetric $U(n)$ gauge theory with bifundamental matter in the presence of a complete surface operator to the elliptic Calogero-Moser system. The fact that the $K$--theory group of $\CM$ is isomorphic to the universal Verma module of $\uu$ is crucial to our proof. \\

\noindent Reference \cite{T} constructed an action of the bigger algebra $\UU$ on $K$, by generators and relations. We will recast this action in terms of the shuffle algebra realization of $\UU$, see \cite{Tor}. Thus, any element of the shuffle algebra gives rise to an operator on $K$. In particular, we have introduced in \loccit the elements:
\begin{equation}
\label{eqn:operator}
S_m^\pm, T_m^\pm \in \UU \qquad \Rightarrow \qquad S_m^\pm, T_m^\pm \curvearrowright K
\end{equation}
for any Laurent polynomial $m(z_i,...,z_{j-1})$ and any pair of integers $i<j$. When $m$ is the constant Laurent polynomial $1$, the operators \eqref{eqn:operator} will give rise to the action of the root generators of $\uu$ on $K$, and we will use this fact to prove Theorem \ref{thm:main}. \\

\noindent The word ``geometric" in the statement of Theorem \ref{thm:main} comes from the fact that the operators \eqref{eqn:operator} will be given by certain explicit correspondences. Specifically, we will define three types of correspondences in Section \ref{sec:corr}: \\

\begin{itemize}
	
\item the fine correspondences $\fZ_{[i;j)}$ in Definition \ref{def:fine} \\

\item the eccentric correspondences $\ofZ_{[i;j)}$ in Definition \ref{def:eccentric} \\

\item the smooth correspondences $\fW_{(k,...,k)}$ in \eqref{eqn:rum} \\

\end{itemize}

\noindent The fine and eccentric correspondences are equipped with tautological line bundles $\CL_i,...,\CL_{j-1}$. With this in mind, we may identify the operators \eqref{eqn:operator} with those induced by the fine and eccentric correspondences: \\

\begin{theorem}
\label{thm:geom}

The shuffle element $S^\pm_m$ \emph{(}resp. $T_m^\pm$\emph{)} $\in \UU$ acts on $K$ via:
$$
m \left (\CL_i,...,\CL_{j-1} \right) \quad \emph{on} \quad \fZ_{[i;j)} \ \ (\text{resp. }\ofZ_{[i;j)})
$$
interpreted as an operator on $K$ in \eqref{eqn:opop1} \emph{(}resp. \eqref{eqn:opop2}\emph{)}. Similarly, the shuffle element $G_{\pm (k,...,k)}$ of \eqref{eqn:g} acts on $K$ via the smooth correspondence $\fW_{(k,...,k)}$. \\

\end{theorem}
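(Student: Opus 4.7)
The plan is to exploit the shuffle presentation of $\UU$ from \cite{Tor} in order to reduce the theorem to the simple-root generators of $\UU$, whose action on $K$ is already described geometrically in \cite{T} by elementary Hecke-type correspondences modifying the flag \eqref{eqn:flag finite} at one step. The key observation is that in the shuffle algebra, the elements $S_m^\pm$, $T_m^\pm$ and $G_{\pm(k,\ldots,k)}$ are built from iterated shuffle products of simple-root generators, weighted by the Laurent polynomial $m$ (or by the specific multiplicity $(k,\ldots,k)$). Hence, modulo bookkeeping, the proof reduces to identifying the $(j-i)$-fold composition of simple correspondences with the geometrically defined $\fZ_{[i;j)}$, $\ofZ_{[i;j)}$ and $\fW_{(k,\ldots,k)}$.

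Concretely, I would first write each shuffle element as a symmetrization over all orderings of a product of simple-root generators, weighted by the explicit rational factors that encode the wheel relations of $\UU$ from \cite{Tor}. Next, I would compute the composition of $j-i$ consecutive simple correspondences via the push-pull formula on the iterated fiber product of Laumon spaces. That iterated fiber product, which parametrizes a chain of $j-i$ successive elementary modifications at the residues $i,i+1,\ldots,j-1$, is naturally identified with the fine correspondence $\fZ_{[i;j)}$ (respectively with the eccentric correspondence $\ofZ_{[i;j)}$, depending on whether one takes the $+$ or $-$ ordering of the composition, the latter picking up the boundary degeneration built into the definition of $\ofZ_{[i;j)}$). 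Under this identification, the successive one-dimensional quotient sheaves produce precisely the tautological line bundles $\CL_i,\ldots,\CL_{j-1}$, so that inserting the formal variables $z_i,\ldots,z_{j-1}$ in the shuffle product translates into inserting the classes $\CL_i,\ldots,\CL_{j-1}$ on the correspondence, yielding the symbol $m(\CL_i,\ldots,\CL_{j-1})$ claimed in the theorem.

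The main obstacle will be the excess-intersection contributions that arise because the simple correspondences do not intersect transversally: whenever two consecutive modifications occur at neighbouring residues, the fiber product acquires extra components or excess normal bundle terms. The crucial computation will be to show that these corrections reproduce exactly the rational wheel factors that appear in the shuffle product on $\UU$, so that after symmetrization the excess terms assemble into the smooth operators associated to $\fZ_{[i;j)}$ and $\ofZ_{[i;j)}$ with no spurious remainder. For the smooth correspondence $\fW_{(k,\ldots,k)}$, the diagonal weight forces the symmetrization to run over the full symmetric group $S_k^{\,n}$, so that most excess contributions cancel in pairs; the remaining class is the one geometrically carried by $\fW_{(k,\ldots,k)}$. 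I expect this last case to follow from the analysis of $S_m^\pm$ and $T_m^\pm$ by a specialization argument, identifying $\fW_{(k,\ldots,k)}$ as the principal component of the corresponding degenerate iterated fiber product.
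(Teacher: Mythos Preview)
Your approach differs substantially from the paper's, and contains a genuine gap. The paper does not factor $S_m^\pm$ or $T_m^\pm$ into products of simple generators at all. Instead, it proceeds directly: the geometric operator $s_m^\pm$ is applied to an arbitrary tautological class $\of$, and the push-forward $p_*^\pm$ is computed explicitly by equivariant localization (Proposition~\ref{prop:pushsyt}), producing an integral of the form \eqref{eqn:pushsyt}. This integral is then matched literally against the universal formula \eqref{eqn:p} in Theorem~\ref{thm:act} with $R^\pm = S_m^\pm$; the identification is immediate once one unwinds the definition \eqref{eqn:s}. The cases of $T_m^\pm$ and $G_{\pm\bk}$ are handled by the parallel Propositions~\ref{prop:pushasyt} and~\ref{prop:pushsmooth}. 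All of the work is concentrated in proving these push-forward propositions, which is done by summing over fixed points (SYT, ASYT, and strip decompositions respectively) and verifying that the residue expansion of the normal-ordered integral reproduces exactly this fixed-point sum.

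The gap in your proposal is the first step: the elements $S_m^\pm$ and $T_m^\pm$ are \emph{not} iterated shuffle products of simple generators. Look at \eqref{eqn:s}: the symmetrand contains the denominators $\prod_a (1 - z_{a+1}/z_a q^2)$, which do not arise from any shuffle product of degree-one elements (those only produce the $\zeta$ factors). So there is no decomposition of $S_m^\pm$ into simple pieces that would let you read off the geometric operator from compositions of simple correspondences plus excess-intersection corrections. Relatedly, your claim that $\fZ_{[i;j)}$ versus $\ofZ_{[i;j)}$ is distinguished by the $\pm$ ordering of a composition is incorrect: compare the flags \eqref{eqn:funk} and \eqref{eqn:zeit} and the two bullets in Subsections~\ref{sub:fine} and~\ref{sub:eccentric}. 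The eccentric correspondence imposes a genuinely different nilpotency/compatibility condition (the ``almost preserves'' clause \eqref{eqn:almost}), and is not a fiber product of simple correspondences in any ordering. This is why its fixed points are indexed by ASYT rather than SYT, and why the push-forward formula \eqref{eqn:pushasyt} has $(1-z_{a-1}/z_a)$ in the denominator rather than $(1-z_a/z_{a-1}q^2)$. The excess-intersection picture you sketch simply does not organize these objects.
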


\noindent The content of the present paper and of \cite{Tor} is synthesized in \cite{thesis}, with additional details, in the related context when affine Laumon spaces are replaced by Nakajima cyclic quiver varieties. Indeed, affine Laumon spaces may be interpreted as ``chainsaw quiver varieties" (see \cite{FR}), which we review in Subsection \ref{sub:quiver2}. \\

\noindent I would like to thank Alexander Braverman, Michael Finkelberg and Andrei Okounkov for teaching me all about affine Laumon spaces, and for numerous very useful talks along the way. I am also grateful to Boris Feigin, Sachin Gautam, Valerio Toledano Laredo, Francesco Sala, and especially Alexander Tsymbaliuk for their interest and help. I gratefully acknowledge the support of NSF grant DMS--1600375. 

\newpage 

\section{The quantum toroidal and shuffle algebras}\label{sec:algebra}

\subsection{} \label{sub:colors} Let us now review the main constructions of \cite{Tor}, and introduce certain notation and results that will be used. In the present paper, we will encounter variables:
$$
x_{i1}, x_{i2},...
$$
for arbitrary $i \in \{1,...,n\}$. The first index is called the \textbf{color} of the variable $x_{ia}$, and is denoted by $i = \col x_{ia}$. Although colors come from the set $\{1,...,n\}$, in many of our formulas we will encounter arbitrary colors $i \in \BZ$, by the convention:
\begin{equation}
\label{eqn:color change}
\Big( z \text{ of color }i \Big) \quad \text{is identified with} \quad \Big( z\oq^{-2 \left \lfloor \frac {i-1}n \right \rfloor} \text{ of color }\bari \Big)
\end{equation}
where $\bari$ denotes the residue class of $i \in \BZ$ in the set $\{1,...,n\}$. As an example, let us consider the following color-dependent rational function:
\begin{equation}
\label{eqn:def zeta}
\zeta \left( \frac zw \right) = \left( \frac {zq \oq^{2 \left \lceil \frac {i-j}n \right\rceil} - wq^{-1}}{z \oq^{2 \left \lceil \frac {i-j}n \right\rceil} - w}\right)^{\delta_{i - j \text{ mod } n}^0 - \delta_{i - j + 1 \text{ mod } n}^0}
\end{equation}
for variables $z,w$ of colors $i,j \in \BZ$. If we wanted to convert the right-hand sides of \eqref{eqn:def zeta} into an expression that only involves variables of colors $\bari, \barj \in \{1,...,n\}$, then:
\begin{equation}
\label{eqn:def zeta exp}
\zeta \left( \frac zw \right)  = \begin{cases} \frac {zq - wq^{-1}}{z-w} & \text{if } \bari = \barj \\
\frac {z-w}{zq - wq^{-1}} &\text{if } \bari + 1 = \barj \\ \frac {z\oq^2-w}{zq\oq^2 - wq^{-1}} &\text{if } \bari = n, \barj = 1 \\ 1 & \text{otherwise} \end{cases}
\end{equation}

\subsection{} Let us now introduce the trigonometric shuffle algebra of type $\widehat{A}_n$ (see \cite{FO} for the original inspiration for shuffle algebras in the context of elliptic quantum algebras of finite type). A rational function:
\begin{equation}
\label{eqn:shuffle}
R(...,z_{i1},...,z_{ik_i},...)
\end{equation}
($i$ goes from 1 to $n$) will be called \textbf{color-symmetric} if it is symmetric in $z_{i1},...,z_{ik_i}$ for all $i$ separately. We will refer to the vector $\bk = (k_1,...,k_n) \in \nn$ as the degree of $R$, and we will write $\bk! = k_1!...k_n!$. Let $\BF = \BQ(q,\oq)$ and define the vector space:
\begin{equation}
\label{eqn:big}
\CV = \bigoplus_{\bk \in \nn} \BF(...,z_{i1},...,z_{ik_i},...)^{\sym}
_{1 \leq i \leq n} 
\end{equation}
where the superscript $\sym$ means that we only consider color-symmetric rational functions. We make \eqref{eqn:big} into a $\BF-$algebra via the \textbf{shuffle product}:
\begin{equation}
\label{eqn:mult}
R(...,z_{i1},...,z_{ik_i},...) * R'(...,z_{i1},...,z_{ik'_i},...) = \frac 1{\bk! \cdot \bk'!} \cdot
\end{equation}
$$
\textrm{Sym} \left[ R(...,z_{i1},...,z_{ik_i},...) R'(...,z_{i,k_i+1},...,z_{i,k_i+k'_i},...) \prod_{i,i'=1}^{n} \prod^{1 \leq j \leq k_i}_{k_{i'} + 1 \leq j' \leq k_{i'} + k'_{i'}} \zeta \left( \frac {z_{ij}}{z_{i'j'}} \right) \right] 
$$
for all rational functions $R$ and $R'$ in $\bk$ and $\bk'$ variables, respectively. \\

\begin{definition}
\label{def:shuf}

The \textbf{shuffle algebra} $\CA^+ \subset \CV$ consists of rational functions:
\begin{equation}
\label{eqn:shuf}
R(...,z_{i1},...,z_{ik_i},...) = \frac {r(...,z_{i1},...,z_{ik_i},...)}{\prod_{i=1}^{n} \prod_{1\leq b \leq k_{i+1}}^{1\leq a \leq k_{i}} (z_{ia} q - z_{i+1,b} q^{-1})} 
\end{equation}
where $r$ goes over all color-symmetric Laurent polynomials satisfying the conditions:
\begin{equation}
\label{eqn:wheel}
r(...,z_{ia},...) \Big |_{z_{i1} \mapsto w, \ z_{i2} \mapsto wq^{\pm 2}, \ z_{i \mp 1,1} \mapsto w} = 0
\end{equation}
for all $i \in \{1,...,n\}$.\footnote{We note that when $i \in \{1,n\}$, we would encounter the variables $z_{01}$ or $z_{n+1,1}$ in \eqref{eqn:wheel}; in this case, we apply the convention \eqref{eqn:color change} to convert these variables into $z_{n1}\oq^2$ and $z_{11}\oq^{-2}$, respectively} \\

\end{definition}

\noindent Implicit in the terminology ``shuffle algebra" is that the vector subspace $\CA^+ \subset \CV$ defined by properties \eqref{eqn:shuf}--\eqref{eqn:wheel} is a subalgebra with respect to the multiplication \eqref{eqn:mult}. This is a straightforward exercise, proved almost word for word as Proposition 2.3 of \cite{Shuf}, so we leave it to the interested reader. \\

\subsection{} It was shown in \cite{Tor} that the shuffle algebra coincides with the subalgebra of $\CV$ generated by the one-variable rational functions $z_{i1}^d$, $\forall i \in \{1,...,n\}, d \in \BZ$. This fact allowed us to prove that there exists an isomorphism:
\begin{equation}
\label{eqn:iso plus}
\CA^+ \cong \UUp 
\end{equation}
between the shuffle algebra and the positive half of the quantum toroidal algebra (the precise definition of the latter will not be important to us, but can be found in \cite{Tor}, where we also introduce a Hopf algebra structure and pairing on the shuffle algebra). The isomorphism \eqref{eqn:iso plus} induces an isomorphism between Drinfeld doubles:
\begin{equation}
\label{eqn:iso}
\CA \cong \UU
\end{equation}
The algebra $\CA$ will be called the double shuffle algebra, and it is defined as:
\begin{equation}
\label{eqn:double}
\CA = \CA^- \otimes \CA^0 \otimes \CA^+ 
\end{equation}
where:
\begin{align*}
&\CA^- = (\CA^+)^{\text{op}} \\
&\CA^0 = \BF \left[c^{\pm 1},\psi_{i,d}^\pm \right]^{d \in \BN \sqcup 0}_{1\leq i \leq n} \Big / \left( \psi_{i,0}^+ \psi_{i,0}^- - 1 \right)
\end{align*}
and the element $c \in \CA$ is central. We refer the reader to \cite{Tor} for details on the multiplicative relations between the three factors of \eqref{eqn:double}. Let us write $\psi_i = \psi_{i,0}^+$ and note that the definition of $\CA^0$ implies that $\psi_i$ is invertible. We will write $R^-$ or $R^+$ when specifying that a particular rational function $R$ as in \eqref{eqn:shuffle} lies in either of the opposite algebras $\CA^-$ or $\CA^+$, respectively. There exists a grading:
$$
\CA^\pm = \bigoplus_{\bk \in \nn} \CA_{\pm \bk}
$$
which assigns to a rational function $R^\pm \in \CA^\pm$ its degree $\pm \bk = \pm (k_1,...,k_n)$. Thus, the degree of $R$ keeps track of the number of variables of each color of $R$. The subalgebra $\CA^0$ is concentrated in degree 0, and the grading extends to the algebra $\CA$. \\

\subsection{} For any $i<j$, let $[i;j) \in \nn$ denote the vector whose $a$--th entry is the number of integers in the set $\{i,...,j-1\}$ that are congruent to $a$ mod $n$. An important role in establishing the isomorphism \eqref{eqn:iso} was played by the rational functions:
\begin{align}
&S^\pm_m(z_i,...,z_{j-1}) = \sym \left[ \frac {m(z_i,...,z_{j-1})}{\left(1 - \frac {z_{i+1}}{z_{i}q^2}  \right) ... \left(1 - \frac {z_{j-1}}{z_{j-2}q^2} \right)} \prod_{i\leq a < b < j} \zeta \left( \frac {z_b}{z_a} \right)  \right] \in \CA_{\pm [i;j)}  \label{eqn:s} \\ &T^\pm_m(z_i,...,z_{j-1}) = \sym \left[ \frac {m(z_i,...,z_{j-1})}{\left(1 - \frac {z_{i}}{z_{i+1}}  \right) ... \left(1 - \frac {z_{j-2}}{z_{j-1}} \right)} \prod_{i\leq a < b < j} \zeta \left( \frac {z_a}{z_b} \right)  \right] \in \CA_{\pm [i;j)} \label{eqn:t} 
\end{align}
defined for all $i < j$ and all $m \in \BF[z_i^{\pm 1},...,z_{j-1}^{\pm 1}]$ (see \eqref{eqn:color change} on how to change the variables $z_i,...,z_{j-1}$ of colors $i,...,j-1$ into variables of color $\in \{1,...,n\}$, so that the right-hand sides of \eqref{eqn:s}--\eqref{eqn:t} are well-defined elements of \eqref{eqn:big}). In particular:
\begin{align}
&E_{[i;j)} := S_1^+ \label{eqn:root e} \\
&F_{[i;j)} := T_1^- \label{eqn:root f}
\end{align}
were shown in \cite{Tor} to give rise to an embedding:
\begin{equation}
\label{eqn:quantum affine}
\uu \hookrightarrow \CA
\end{equation}
$$
 e_{[i;j)} \leadsto E_{[i;j)}, \qquad f_{[i;j)} \leadsto F_{[i;j)}
 $$
$e_{[i;j)}, f_{[i;j)}$ are the root generators of the subalgebras $\uup, \uum \subset \uu$, respectively, \footnote{Note that $e_{[i;i+1)}, f_{[i;i+1)}$ are the usual Drinfeld-Jimbo generators of the subalgebra $\su \subset \uu$, which we will recount in the next Subsection} and the relations between them were deduced in \loccit based on the RTT presentation of the quantum affine algebra (akin to \cite{DF}, \cite{GM}). The indexing sets go over all integers $i<j$, where we identify the pair $[i,j)$ with $[i+n,j+n)$. \\ 

\subsection{}

Consider the following elements of the shuffle algebra, defined for all $\bk \in \nn$:
\begin{align}
&G_{\bk} =  \frac 1{(q^{-1}-q)^{|\bk|}} \prod_{1 \leq i \leq n}  \frac {\prod^{1\leq a \leq k_i}_{1\leq b \leq k_i} \left(\frac 1q - \frac {z_{ia}q}{z_{ib}} \right)}{\prod^{1\leq a \leq k_i}_{1\leq b \leq k_{i+1}} \left(\frac 1q - \frac {z_{ia}q}{z_{i+1,b}} \right)} \in \CA_{\bk} \label{eqn:g plus} \\
&G_{-\bk} =  \frac 1{(1 - q^{-2})^{|\bk|}} \prod_{1 \leq i \leq n}   \frac {\prod^{1\leq a \leq k_i}_{1\leq b \leq k_i} \left(\frac 1q - \frac {z_{ia}q}{z_{ib}} \right)}{\prod^{1\leq a \leq k_i}_{1\leq b \leq k_{i+1}} \left(\frac 1q - \frac {z_{ia}q}{z_{i+1,b}} \right)} \in \CA_{-\bk} \label{eqn:g minus}
\end{align}
where $|\bk| = k_1+...+k_n$. It was shown in \loccit that there exists an isomorphism:
$$
\su \otimes \uui \cong \uu 
$$
as well as an injective homomorphism:
$$
\uu \hookrightarrow \CA 
$$
given by sending:
\begin{align}
&\text{Drinfeld-Jimbo generators of } \su \leadsto E_{[i;i+1)}, F_{[i;i+1)} \label{eqn:dj} \\
&\text{group-like generators } g_{\pm k} \text{ of } \uui \leadsto \oq^{k^2} G_{\pm (k,...,k)} \label{eqn:g}
\end{align}
$\forall k \in \BN$. We refer to \loccit for a discussion of the Heisenberg algebra $\uui$ and its embedding into $\uu$, in terms of which the defining relation takes the form:
\begin{equation}
\label{eqn:comm heis}
[P_k, P_l] = \delta_{k+l}^0 k \cdot \frac {(q^{nk} - q^{-nk})(c^k - c^{-k})}{(\oq^{k} - \oq^{-k})(q^{nk} \oq^{k} - q^{-nk} \oq^{-k})}
\end{equation}
where $\sum_{k=0}^\infty \oq^{k^2} G_{\pm (k,...,k)} x^k  =\exp \left( \sum_{k=1}^\infty \frac {P_{\pm k}x^k}k \right)$ and $c$ is the central element of $\CA$. \\

\section{Laumon Spaces}\label{sec:laumon}

\subsection{}\label{sub:affdef}

\noindent To define affine Laumon spaces, consider the surface $\BP^1 \times \BP^1$ and the divisors:
$$
D = \BP^1 \times \{0\}, \qquad \qquad \infty = \BP^1 \times \{\infty\} \cup \{\infty\} \times \BP^1
$$
A rank $n$ \textbf{parabolic sheaf} $\CF$ is a flag of rank $n$ torsion free sheaves: 
\begin{equation}
\label{eqn:flag0}
\CF_\bullet = \Big\{ \CF_n(-D) \subset \CF_1 \subset ... \subset \CF_{n-1} \subset \CF_n \Big\}
\end{equation}
on $\BP^1 \times \BP^1$, together with a collection of isomorphisms:
$$
\xymatrix{\CF_n(-D)|_{\infty} \ar[r] \ar[d]^\cong & \CF_1|_{\infty} \ar[r] \ar[d]^\cong &  ... \ar[r] \ar[d]^\cong & \CF_{n-1}|_{\infty} \ar[r] \ar[d]^\cong& \CF_n|_{\infty} \ar[d]^\cong \\
\CO_{\infty}^{\oplus n}(-D) \ar[r] & \CO_{\infty} \oplus \CO_{\infty}^{\oplus n-1}(-D) \ar[r] & ... \ar[r] & \CO_{\infty}^{\oplus n-1} \oplus \CO_{\infty}(-D) \ar[r] & \CO_{\infty}^{\oplus n}} \qquad 
$$
The vertical isomorphism are called \textbf{framing}, and they force $c_1(\CF_i) = -(n-i)D$. On the other hand, $c_2(\CF_i) =: d_i$ can vary over all non-negative integers, and we call the vector $\bd=(d_1,...,d_n) \in \nn$ the \textbf{degree} of the parabolic sheaf \eqref{eqn:flag0}. \\

\begin{definition} 
\label{def:laumon}
	
The {\bf affine Laumon space} $\CM_{\bd}$ is the moduli space of rank $n$, degree $\bd$ parabolic sheaves as above (see the footnote on page 1 for the etymology of the word ``affine"). \\ 

\end{definition}

\noindent Affine Laumon spaces are smooth and quasi-projective varieties of dimension $2|\bd| = 2(d_1+...+d_n)$. It will be convenient to extend \eqref{eqn:flag0} to an infinite flag of sheaves, by setting $\CF_{i+n} = \CF_i(D)$ for all $i \in \BZ$. \\

\begin{remark}
\label{rem:laumon}

When $d_n=0$, one has an isomorphism:
$$
\CF_n \cong \CO^{\oplus n}_{\BP^1 \times \BP^1}
$$
for any parabolic sheaf \eqref{eqn:flag0}. In this case, the parabolic sheaf \eqref{eqn:flag0} is completely determined by its restriction to the divisor $D$, so affine Laumon spaces for $d_n = 0$ parametrize flags of sheaves \eqref{eqn:flag finite} on a projective line. \\

\end{remark}

\subsection{}\label{sub:fixed}

An alternative presentation of affine Laumon spaces was given in \cite{FFNR}, inspired by independent constructions of Biswas and Okounkov. Consider the $n$-to-1 map:
$$
\sigma : \BP^1 \times \BP^1 \longrightarrow \BP^1 \times \BP^1, \qquad \qquad \sigma(x,y) = (x,y^n) 
$$
To any flag \eqref{eqn:flag0}, we may associate the following rank $n$ torsion-free sheaf on $\BP^1 \times \BP^1$:
\begin{equation}
\label{eqn:flag}
\CF = \sigma^*(\CF_0) + \sigma^*(\CF_1)(-D) + ... + \sigma^*(\CF_{n-1})(-(n-1)D)
\end{equation}
Let us stress the fact that the right hand side is not a direct sum, but simply the linear span of the sheaves $\{ \sigma^*(\CF_i)(-iD) \}_{0\leq i < n}$ inside the sheaf $\sigma^*(\CF_{n-1})$. Because of the framing condition, we observe that:
\begin{equation}
\label{eqn:framing}
\CF|_\infty = \CO_\infty(-D) \oplus ... \oplus \CO_\infty(-nD)
\end{equation}
It is straightforward to show that $c_2(\CF) = |\bd|$. The authors of \cite{FR} modify the framing of $\CF$ at $\BP^1 \times \{\infty\} \subset \infty$ and produce a one-to-one correspondence:  
\begin{equation}
\label{eqn:misha}
(\CF_n(-D) \subset \CF_1 \subset ... \subset \CF_{n-1} \subset \CF_n) \quad \rightsquigarrow \quad \tCF
\end{equation}
where $\tCF$ matches $\CF$ off $\infty$, but the framing condition is changed to $\tCF|_\infty = \CO_\infty^{\oplus n}$. The advantage of this choice is that one can directly match the construction with the usual moduli space of framed sheaves, as defined below. \\

\begin{definition} 

Consider the moduli space $\CN_d$ which parametrizes rank $n$ torsion free sheaves $\tCF$ on $\BP^1 \times \BP^1$, with $c_2(\tCF) = d$ and a framing isomorphism $\tCF|_\infty \cong \CO_\infty^{\oplus n}$. \\
	
\end{definition}

\noindent As explained in \loccitt, any sheaf $\tCF$ on $\BP^1 \times \BP^1$ which arises from the correspondence \eqref{eqn:misha} is invariant under the action: 
\begin{equation}
\label{eqn:act1}
\BZ/n\BZ \ \curvearrowright \ \BP^1 \times \BP^1, \qquad e^{\frac {2\pi i}n}\cdot (x,y) = (x,e^{-\frac {2\pi i}n} y)
\end{equation}
and $\BZ/n\BZ \curvearrowright \CO_\infty^{\oplus n}$ via:
\begin{equation}
\label{eqn:act2}
e^{\frac {2\pi i}n} \ \mapsto \ \text{the matrix } \ \delta = \text{diag}\left(e^{\frac {2\pi i}n} ,..., e^{\frac {2\pi i(n-1)}n}, e^{\frac {2\pi in}n} \right)
\end{equation}
Conversely, any $\BZ/n\BZ-$invariant rank $n$ torsion free sheaf on $\BP^1 \times \BP^1$ is of the form \eqref{eqn:flag} for some parabolic sheaf \eqref{eqn:flag0}. We conclude that:
\begin{equation}
\label{eqn:fixedlocus}
\CN_d^{\BZ/n\BZ} \ = \ \bigsqcup_{\bd \in \nn}^{|\bd| = d} \CM_\bd
\end{equation}

\subsection{}\label{sub:quiver1}

We note that the moduli space $\CN_d$ of framed sheaves on $\BP^1 \times \BP^1$ is isomorphic to the moduli space of framed sheaves on $\BP^2$ (see \cite{BFG}, Section 4). Therefore, we conclude that $\CN_d$ is a Nakajima quiver variety, which by \cite{Nak} can be presented as the set of quadruples of linear maps $(X,Y,A,B)$ which satisfy certain properties. In more detail, consider the double framed Jordan quiver: \\

\begin{picture}(200,120)(30,-50)

\label{pic:jordan}

\put(200,31){\circle*{10}}
\put(200,51){\circle{60}}
\put(200,50){\circle{30}}

\put(192,36){\vector(4,-1){5}}
\put(208,32){\vector(-4,-1){5}}

\put(198,-10){\vector(0,1){36}}
\put(202,26){\vector(0,-1){36}}

\put(195,-20){\line(1,0){10}}
\put(195,-10){\line(1,0){10}}
\put(195,-20){\line(0,1){10}}
\put(205,-20){\line(0,1){10}}

\put(208,23){$\color{red}{V \cong \BC^d}$}
\put(208,-20){$\color{blue}{W \cong \BC^n}$}
\put(188,6){$A$}
\put(204,6){$B$}
\put(185,47){$X$}
\put(223,47){$Y$}

\put(180,-40){\text{Figure \ref{pic:jordan}}}

\end{picture} 

\noindent Specifically, the picture represents the fact that we fix vector spaces $V \cong \BC^d$ and $W \cong \BC^n$ and consider the following vector space of linear maps between them:
$$
N_d = \text{Hom}(V,V) \oplus \text{Hom}(V,V) \oplus \text{Hom}(W,V) \oplus \text{Hom}(V,W)  
$$
Elements of this vector space are quadruples of linear maps $(X,Y,A,B)$, whose domains and codomains are depicted in Figure \ref{pic:jordan}. Consider the quadratic map:
$$
N_d \stackrel{\mu}\longrightarrow \text{End}(V), \qquad \mu(X,Y,A,B) = [X,Y]+AB
$$
and the action of $GL_d := GL(V)$ on $N_d$ by conjugation. The well-known ADHM presentation of the moduli space of framed sheaves (see \loccitt) asserts that:
\begin{equation}
\label{eqn:adhm0}
\CN_d \cong \mu^{-1}(0)^s / GL_d
\end{equation}
Here and throughout this paper, the superscript $s$ means that we intersect $\mu^{-1}(0)$ with the open set of {\bf stable} quadruples $(X,Y,A,B)$, i.e. those for which the vector space $V$ is generated by $X$ and $Y$ acting on $\text{Im }A$. \\

\begin{remark}
\label{rem:stable}

The quotient \eqref{eqn:adhm0} is geometric because the stable locus defined as above matches the one prescribed by Geometric Invariant Theory (with respect to the determinant character of $GL_d$) and the $GL_d$ action on $\mu^{-1}(0)$ is free. \\

\end{remark}

\subsection{}\label{sub:quiver2}

We will now recall the \textbf{chainsaw quiver} construction from \cite{FR}. In terms of quadruples, the action $\BZ/n\BZ \curvearrowright \CN_d$ from \eqref{eqn:act1}--\eqref{eqn:act2} is given by:
$$
e^{\frac {2\pi}n} \cdot (X,Y,A,B) = (X, e^{\frac {2\pi}n} Y, A \delta, e^{\frac {2\pi}n} \delta^{-1} B)
$$
For such a quadruple to be $\BZ/n\BZ-$fixed, i.e. for the quadruple to correspond to a point in affine Laumon space by \eqref{eqn:fixedlocus}, there must exist some $g \in GL_d$ such that:
\begin{equation}
\label{eqn:china}
(X, e^{\frac {2\pi}n} Y, A\delta , e^{\frac {2\pi}n} \delta^{-1} B) = (gXg^{-1}, gYg^{-1}, gA, Bg^{-1})
\end{equation}
If we decompose $V = \bigoplus_{\lambda \in \BC} V(\lambda)$ in terms of the generalized eigenspaces of $g$, then \eqref{eqn:china} specifies that the linear maps $X,Y,A,B$ can only act non-trivially between the following pairs of eigenspaces:
\begin{align}
&V(\lambda) \stackrel{X}\rightarrow V(\lambda) \qquad \ \quad V(\lambda) \stackrel{Y}\rightarrow V \left(\lambda \cdot e^{\frac {2\pi}n} \right) \label{eqn:act 1} \\ &w_k \stackrel{A}\rightarrow V\left(e^{\frac {2\pi i k}n}\right) \qquad V\left(e^{\frac {2\pi i k}n}\right) \stackrel{B}\rightarrow w_{k+1} \label{eqn:act 2}
\end{align}
where $w_k$ denotes the basis vector of $W$ which is acted on by the character $e^{\frac {2\pi i k}n}$ in the action \eqref{eqn:act2}. Because we only consider stable quadruples, then \eqref{eqn:act 1}--\eqref{eqn:act 2} imply that the only non-zero eigenspaces are $V_k := V(e^{\frac {2\pi i k}n})$. Therefore, the maps $(X,Y,A,B)$ in a $\BZ/n\BZ-$fixed quadruple split up according to the following diagram: \\

\begin{picture}(200,130)(30,-60)\label{pic:chainsaw}

\put(43,31){\dots}
\put(343,31){\dots}

\put(60,31){\vector(1,0){45}}
\put(72,34){$Y_{i-2}$}

\put(115,31){\vector(1,0){50}}
\put(135,34){$Y_{i-1}$}

\put(175,31){\vector(1,0){50}}
\put(195,34){$Y_{i}$}

\put(235,31){\vector(1,0){50}}
\put(255,34){$Y_{i+1}$}

\put(295,31){\vector(1,0){50}}
\put(315,34){$Y_{i+2}$}

\put(110,31){\circle*{10}}
\put(110,50){\circle{30}}
\put(95,47){$X_{i-2}$}
\put(117,22){$\color{red}{V_{i-2}}$}
\put(102,36){\vector(4,-1){5}}

\put(170,31){\circle*{10}}
\put(170,50){\circle{30}}
\put(155,47){$X_{i-1}$}
\put(177,22){$\color{red}{V_{i-1}}$}
\put(162,36){\vector(4,-1){5}}

\put(230,31){\circle*{10}}
\put(230,50){\circle{30}}
\put(215,47){$X_{i}$}
\put(237,22){$\color{red}{V_{i}}$}
\put(222,36){\vector(4,-1){5}}

\put(290,31){\circle*{10}}
\put(290,50){\circle{30}}
\put(275,47){$X_{i+1}$}
\put(297,22){$\color{red}{V_{i+1}}$}
\put(282,36){\vector(4,-1){5}}

\put(75,-20){\line(1,0){10}}
\put(75,-10){\line(1,0){10}}
\put(75,-20){\line(0,1){10}}
\put(85,-20){\line(0,1){10}}
\put(88,-20){$\color{blue}{\BC w_{i-2}}$}

\put(51,27){\vector(2,-3){25}}
\put(85,-10){\vector(2,3){25}}
\put(68,4){$B_{i-2}$}
\put(98,4){$A_{i-2}$}

\put(135,-20){\line(1,0){10}}
\put(135,-10){\line(1,0){10}}
\put(135,-20){\line(0,1){10}}
\put(145,-20){\line(0,1){10}}
\put(148,-20){$\color{blue}{\BC w_{i-1}}$}

\put(111,27){\vector(2,-3){25}}
\put(145,-10){\vector(2,3){25}}
\put(128,4){$B_{i-1}$}
\put(158,4){$A_{i-1}$}

\put(195,-20){\line(1,0){10}}
\put(195,-10){\line(1,0){10}}
\put(195,-20){\line(0,1){10}}
\put(205,-20){\line(0,1){10}}
\put(208,-20){$\color{blue}{\BC w_{i}}$}

\put(171,27){\vector(2,-3){25}}
\put(205,-10){\vector(2,3){25}}
\put(188,4){$B_{i}$}
\put(218,4){$A_{i}$}

\put(255,-20){\line(1,0){10}}
\put(255,-10){\line(1,0){10}}
\put(255,-20){\line(0,1){10}}
\put(265,-20){\line(0,1){10}}
\put(268,-20){$\color{blue}{\BC w_{i+1}}$}

\put(231,27){\vector(2,-3){25}}
\put(265,-10){\vector(2,3){25}}
\put(248,4){$B_{i+1}$}
\put(278,4){$A_{i+1}$}

\put(315,-20){\line(1,0){10}}
\put(315,-10){\line(1,0){10}}
\put(315,-20){\line(0,1){10}}
\put(325,-20){\line(0,1){10}}
\put(328,-20){$\color{blue}{\BC w_{i+2}}$}

\put(291,27){\vector(2,-3){25}}
\put(325,-10){\vector(2,3){25}}
\put(308,4){$B_{i+2}$}
\put(338,4){$A_{i+2}$}

\put(180,-45){\text{Figure \ref{pic:chainsaw}}}

\end{picture} 

\noindent More precisely, we fix vector spaces $V_1,...,V_n$ of dimensions $d_1,...,d_n$ that sum up to $d$, identify $V_0$ with $V_n$, and consider the vector space of linear maps:
\begin{equation}
\label{eqn:affine}
M_\bd = \bigoplus_{i=1}^{n} \Hom(V_i,V_i) \bigoplus_{i=1}^{n} \Hom(V_{i-1},V_i) \bigoplus_{i=1}^{n} \Hom(W_i,V_i) \bigoplus_{i=1}^{n} \Hom(V_{i-1},W_i) \qquad
\end{equation}
where $\bd = (d_1,...,d_n) \in \nn$. Elements of the vector space $M_\bd$ will be quadruples $(X_i,Y_i,A_i,B_i)_{1 \leq i \leq n}$. Consider the quadratic map:
\begin{equation}
\label{eqn:moment}
M_\bd \stackrel{\nu}\longrightarrow \bigoplus_{i=1}^{n} \Hom(V_{i-1},V_i)
\end{equation}
$$
\nu(X_i,Y_i,A_i,B_i)_{1 \leq i \leq n} = \bigoplus_{i=1}^{n} \left( X_iY_i - Y_iX_{i-1}+A_iB_i \right)
$$
We let $GL_\bd := \prod_{i=1}^{n} GL(V_i)$ act on the vector space \eqref{eqn:affine} by conjugation, and with this in mind, \eqref{eqn:fixedlocus}--\eqref{eqn:adhm0} imply the following description of affine Laumon spaces:
\begin{equation}
\label{eqn:adhm1}
\CM_\bd \cong \nu^{-1}(0)^s/GL_\bd
\end{equation}
The superscript $s$ refers to the open subset of stable quadruples, i.e. those where the vector spaces $V_i$ are generated by the $X$ and $Y$ maps acting on the images of the $A$ maps. Therefore, points of affine Laumon spaces are stable quadruples of linear maps as in Figure \ref{pic:chainsaw} mod conjugation, satisfying the moment map equation $\nu = 0$. It is well-known that the moment map \eqref{eqn:moment} is a submersion on the open subset of stable quadruples, which implies the smoothness of \eqref{eqn:adhm1}. Moreover, a straightforward analogue of Remark \ref{rem:stable} shows that the quotient \eqref{eqn:adhm1} is geometric. \\

\subsection{}\label{sub:toract}

The maximal torus $T_n \subset GL_n$ and the rank 2 torus $\BC^* \times \BC^*$ act on $\CM_\bd$ by changing the trivialization at $\infty$, respectively by multiplying the base $\BP^1 \times \BP^1$. In terms of quadruples \eqref{eqn:moment}, this action is given by:
$$
(U_1,...,U_n,Q,\oQ) \cdot (..., X_i,Y_i,A_i,B_i,...) = \left(..., Q^2 X_i, \oQ^{2\delta_i^1} Y_i, A_iU^2_i, \frac {Q^2 \oQ^{2\delta_i^1} B_i}{U^2_{i}},... \right)
$$
for all $(U_1,...,U_n,Q,\oQ) \in T_n \times \BC^* \times \BC^*$. We choose even powers in the expression above in order to avoid square roots later on (in other words, the torus action we consider is a $2^{n+2}-$fold cover of the usual one). Write $T = T_n \times \BC^* \times \BC^*$, and let
\begin{equation}
\label{eqn:coordinates}
K_{T}(\pt) = \text{Rep}(T) = \BZ \left[u_1^{\pm 1},...,u_n^{\pm 1}, q^{\pm 1}, \oq^{\pm 1} \right] 
\end{equation}
(where $u_1,...,u_n,q,\oq$ denote the characters on $T$ which are dual to $U_1,...,U_n,Q,\oQ$ above). In the present paper, we will study the $T-$equivariant algebraic $K-$theory groups of affine Laumon spaces:
$$
K_\bd^{\text{int}} := K_{T}(\CM_{\bd})
$$
which are all modules over the ring \eqref{eqn:coordinates}. The superscript ``int" refers to integral $K-$theory, as opposed from the localized $K-$theory that we will mostly focus on:
$$
K_\bd := K_{T}(\CM_{\bd}) \bigotimes_{K_{T}(\pt)} \text{Frac}(K_{T}(\pt))
$$
Our main actor is the $\nn-$graded vector space:
\begin{equation}
\label{eqn:ktheory}
K = \bigoplus_{\bd \in \nn} K_\bd
\end{equation}
over the field $\BF_\bu := \text{Frac}(K_{T}(\pt)) = \BQ \left(u_1,...,u_n,q,\oq\right) $. \\

\begin{remark}
	
We claim that the natural localization map $K_{\bd}^{\text{int}} \hookrightarrow K_{\bd}$ is injective, which is simply another way of saying that $K_{\bd}^{\text{int}}$ is a torsion-free $K_T(\pt)$--module. Coupled with the fact that the algebra action $\CA \curvearrowright K$ constructed in the present paper is given by geometric correspondences, this means that (an appropriate normalization of) this action preserves $K^{\text{int}}$. However, we see no benefit in working with integral over localized $K$--theory, so we stick to the latter to keep things concise. \\

\noindent As for why $K_{\bd}^{\text{int}}$ is a torsion-free $K_T(\pt)$--module, this holds (by a standard excision argument) for all smooth quasi-projective varieties which have a $T$--equivariant affine cell decomposition. A standard generalization of the Bialynicki-Birula theorem shows that, in order to have such a cell decomposition, the only thing one needs to prove is that there exists a 1-parameter subgroup $\sigma : \BC^* \rightarrow T$ such that:
$$
\lim_{t \rightarrow 0} \sigma(t) \cdot x
$$
exists and is a $T$--fixed point of $\CM_\bd$, for all $x \in \CM_\bd$. Since $\CM_\bd  \subset \CN_{|\bd|}^{\BZ/n\BZ}$ and the $\BZ/n\BZ$--action commutes with the $T$--action, this follows from the analogous property for the smooth quasi-projective varieties $\CN_d$ (in which case, see Theorem 3.7 of \cite{NY}). \\
	
\end{remark}

\subsection{} \label{sub:tangent}

For any $\bd \in \nn$, consider the ring of color-symmetric Laurent polynomials:
$$
\Lambda_\bd = \BF_\bu[...,x_{ia}^{\pm 1},...]^{\sym}_{1 \leq i \leq n, 1\leq a \leq d_i}
$$
namely those polynomials $f$ which are symmetric in $x_{i1},...,x_{id_i}$ for each $i$ separately. Let $\pi : \nu^{-1}(0)^s \rightarrow \text{pt}$ denote the standard map, and consider the composition:
$$
\Lambda_\bd \cong K_{T \times GL_\bd}(\text{pt}) \stackrel{\pi^*}\longrightarrow K_{T \times GL_\bd}(\nu^{-1}(0)^s) \stackrel{\eqref{eqn:adhm1}}= K_{T}(\CM_\bd)
$$
\begin{equation}
\label{eqn:kirwan}
f \leadsto \of
\end{equation}
Elements of the image of the map \eqref{eqn:kirwan} will be called \textbf{tautological classes}. When $f(...,x_{ia},...) = x_{i1}+...+x_{id_i}$ is the first power sum function, then:
\begin{equation}
\label{eqn:chernroots}
\of = [\CV_i]
\end{equation}
is the class of the {\bf tautological vector bundle} $\CV_i$, whose fiber over a quadruple $\in M_\bd$ is the vector space $V_i$ itself. To compute the tangent space to $\CM_{\bd}$, we recall that the tangent space to a vector space is the vector space itself, hence:
\begin{equation}
\label{eqn:mich}
[TM_\bd] = \sum_{i = 1}^{n} \left( \frac {\CV_i}{\CV_i q^2} + \frac {\CV_i}{\CV_{i-1}} + \frac {\CV_i}{u_i^2} + \frac {u_i^2}{\CV_{i-1}q^2} \right) 
\end{equation}
The four sums above are the contributions of the $X,Y,A,B$ linear maps, respectively. To keep formulas simple, here and throughout this paper we abuse notation and write $\CV$ instead of $[\CV]$ for the class of a vector bundle. We also write:
\begin{equation}
\label{eqn:weird}
\frac {\CV'}{\CV} \quad \text{instead of} \quad [\CV' \otimes \CV^\vee]
\end{equation}
and set:
\begin{align}
&\CV_i = \CV_{\bari} \cdot \oq^{-2\left \lfloor \frac {i-1}n \right \rfloor} \label{eqn:change 1} \\
&u_i = u_{\bari} \cdot \oq^{-\left \lfloor \frac {i-1}n \right \rfloor} \label{eqn:change 2} 
\end{align}
for all $i\in \BZ$, which absorbs the dependence of \eqref{eqn:mich} on the equivariant parameter $\oq$. Since the map \eqref{eqn:moment} is a submersion on the stable locus, the $K$--theory class of $T\CM_\bd$ is equal to \eqref{eqn:mich} minus the $K$--theory classes of the tangent bundles to the codomain of $\mu$ and to the gauge group $GL_\bd$, which are respectively:
$$
\sum_{i = 1}^{n} \frac {\CV_i}{\CV_{i-1}q^2} \qquad \text{and} \qquad \sum_{i = 1}^{n} \frac {\CV_i}{\CV_i}
$$
We obtain the following formula for the tangent bundle to affine Laumon spaces:
\begin{equation}
\label{eqn:tangentspace}
[T\CM_\bd] = \sum_{i=1}^{n} \left[\left(1 - \frac 1{q^2} \right) \left(\frac {\CV_i}{\CV_{i-1}} -  \frac {\CV_i}{\CV_i} \right) + \frac {\CV_i}{u_i^2} + \frac {u_{i+1}^2}{\CV_i q^2} \right] 
\end{equation}
In particular, the rank of the tangent bundle, i.e. the dimension of $\CM_\bd$, is: 
$$
2 \sum_{i=1}^{n} \text{rk } \CV_i = 2(d_1+...+d_n) = 2|\bd|
$$

\subsection{}\label{sub:lambda}

One expects that tautological classes generate the integral $K-$theory ring $K_\bd^{\text{int}}$. We do not have a proof of this claim, but it becomes quite simple once we replace integral $K-$theory with localized $K-$theory. In other words, the following Proposition establishes the fact that the map \eqref{eqn:kirwan} is surjective upon localization: \\

\begin{proposition}
\label{prop:gen}

For any $\bd \in \BN^n$, the $\BF_\bu$--vector space $K_\bd$ is spanned by the tautological classes $\of$, as $f \in \Lambda_\bd$ goes over all color-symmetric Laurent polynomials. \\

\end{proposition}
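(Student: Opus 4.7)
The strategy is to use equivariant $K$-theoretic localization to reduce the claim to a combinatorial separation-of-fixed-points statement. By the affine cell decomposition recalled in the remark preceding the Proposition, $\CM_\bd^T$ consists of finitely many isolated points, and the Thomason/Atiyah-Bott localization theorem furnishes an isomorphism of $\BF_\bu$-vector spaces
$$
\iota^* \,:\, K_\bd \;\stackrel{\sim}{\longrightarrow}\; \bigoplus_{\bla \in \CM_\bd^T} \BF_\bu \cdot [\bla].
$$
It therefore suffices to produce, for each fixed point $\bla$, a color-symmetric Laurent polynomial $f_\bla \in \Lambda_\bd$ whose tautological class $\overline{f_\bla}$ restricts to a nonzero multiple of $[\bla]$ and vanishes at every other fixed point; the resulting classes $\overline{f_\bla}$ will then span $K_\bd$.

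To compute $\iota^*\overline{f}$ explicitly, I would invoke the ADHM/chainsaw description of Subsection~\ref{sub:quiver2}. The standard analysis of torus fixed points on Nakajima-type quiver varieties identifies $\CM_\bd^T$ with a finite set of tuples of Young diagrams $\bla$ appropriately constrained by $\bd$, and at each such $\bla$ the tautological bundle $\CV_i$ splits $T$-equivariantly as a direct sum of $d_i$ one-dimensional characters $\chi_\sq(\bla) \in \BF_\bu^\times$, indexed by the boxes $\sq$ contributing to $V_i$ in Figure~\ref{pic:chainsaw}. Each $\chi_\sq(\bla)$ is an explicit monomial in $u_j^{\pm 2}, q^{\pm 2}, \oq^{\pm 2}$, and consequently, for any $f \in \Lambda_\bd$,
$$
(\iota^*\overline{f})_\bla \;=\; f\bigl(\ldots, \chi_\sq(\bla), \ldots\bigr).
$$

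The heart of the argument, and what I expect to be the main obstacle, is the combinatorial injectivity statement that the assignment
$$
\bla \,\longmapsto\, \bigl(\{\chi_\sq(\bla)\}_{\col\sq = i}\bigr)_{i=1}^{n}
$$
is one-to-one on $\CM_\bd^T$. Granting this, standard Lagrange interpolation produces the required polynomials $f_\bla$: any two distinct multisets of nonzero elements of $\BF_\bu$ are separated by some power-sum symmetric polynomial, and taking suitable products yields color-symmetric Laurent polynomials with Kronecker-delta values across the fixed points. The injectivity itself should follow by inspection of the explicit weight formula — the $u$-exponent of $\chi_\sq$ records which component of $\bla$ contains the box $\sq$, while the $q, \oq$-exponents record its coordinates within that component — so the multiset of characters at $\bla$ recovers $\bla$ uniquely. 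This completes the spanning argument.
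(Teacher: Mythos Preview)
Your proposal is correct and follows essentially the same route as the paper: reduce to the fixed-point basis via localization, then use the fact that distinct fixed points $\bla \neq \bmu$ are separated by their tautological weight data. The paper packages the final step slightly differently---instead of Lagrange interpolation it applies a Vandermonde-type claim to the operators of multiplication by $[\CV_k]$ acting on the structure sheaf $\overline{1}$---but the substance (separation of fixed points by the multisets $\{\chi_\sq\}_{\col\sq=k}$) is identical to your injectivity statement.
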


\noindent The Proposition above will be proved in Subsection \ref{sub:restrictions}. An advantage of working with tautological classes is that we can replace the ring $\Lambda_\bd$ of color-symmetric Laurent polynomials in finitely many variables by the ring:
$$
\Lambda = \BF_\bu[...,x_{ia}^{\pm 1},...]_{1 \leq i \leq n, a\in \BN}^{\sym}
$$
of color-symmetric Laurent polynomials in infinitely many variables over the field $\BF_\bu = \BQ(u_1,...,u_n,q,\oq)$. Then we may collect the maps \eqref{eqn:kirwan} together for all $\bd$:
$$
\Lambda \longrightarrow \prod_{\bd \in \nn} K_\bd, \qquad f \leadsto \of
$$
We abuse notation by writing $\of$ either for a $K-$theory class on a single moduli space $\CM_\bd$, or for the collection of these $K-$theory classes over all $\bd \in \nn$. We will often write color-symmetric functions $f\in \Lambda$ by using the shorthand notation:
\begin{equation}
\label{eqn:notation}
f(X) = f(...,x_{ia},...)^{1 \leq i \leq n}_{a\in \BN}
\end{equation}
in terms of the alphabet $X = \sum_{i=1}^{n} \sum_{a \in \BN} x_{ia}$. Recalling the color-symmetric rational function \eqref{eqn:def zeta}, we may write expressions such as:
\begin{align}
&\zeta \left( \frac zX \right) = \prod_{j,a} \zeta \left(\frac z{x_{ja}} \right) = \prod_{a=1}^\infty \frac {zq - x_{ia}q^{-1}}{z-x_{ia}} \prod_{a=1}^\infty \frac {z - x_{i+1,a}}{zq - x_{i+1,a}q^{-1}} \label{eqn:zeta1} \\
&\zeta \left( \frac Xz \right)^{-1} = \prod_{j,a} \zeta \left(\frac {x_{ja}}z \right)^{-1} = \prod_{a=1}^\infty \frac {x_{ia} - z}{x_{ia}q - z q^{-1}} \prod_{a=1}^\infty \frac {x_{i-1,a} q - z q^{-1}}{x_{i-1,a} - z} \label{eqn:zeta2}
\end{align}
for any variable $z$ of color $i$. The right-hand sides are interpreted as color-symmetric functions in the $x$ variables by expanding $z$ around either 0 or $\infty$, depending on the situation. Moreover, the notation \eqref{eqn:notation} allows us to define the so-called {\bf plethysm} homomorphism with respect to any variable $z$ of any color $i$:
\begin{equation}
\label{eqn:pleth1}
f(X) \leadsto f(X+z) = f(x_{11},...,\underbrace{z,x_{i1},x_{i2},...}_{\text{color }i \text{ variables}},..., x_{n1},...)
\end{equation}
for any $f\in \Lambda$. The inverse operation to \eqref{eqn:pleth1} is denoted by:
\begin{equation}
\label{eqn:pleth2}
f(X) \leadsto f(X-z)
\end{equation}
It does not have a closed formula akin to \eqref{eqn:pleth1}, but if $z$ is a variable of color $i$ and $f = x_{j1}^k+x_{j2}^k+...$ is a {\bf power sum function} of variables of color $j$, then:
$$
f(X \pm z) = f(X) \pm \delta_i^j z^k
$$
Since all elements of $\Lambda$ are polynomials in the power sum functions, the equation above determines the ring homomorphisms \eqref{eqn:pleth1} and \eqref{eqn:pleth2} completely. \\

\subsection{}\label{sub:action}

Plethysms can also be defined by adding or subtracting whole alphabets:
\begin{equation}
\label{eqn:xyz}
Z = \sum^{1 \leq i \leq n}_{1\leq a \leq k_i} z_{ia}
\end{equation}
by successively adding or subtracting the individual variables $z_{ia}$. We will write:
$$
R^\pm(Z) \ = \ R^\pm(...,z_{ia},...)^{1 \leq i \leq n}_{1\leq a \leq k_i} 
$$
for arbitrary shuffle elements $R^\pm \in \CA_{\pm \bk}$. We define:
\begin{align}
&\tau_+(Z) = \prod^{1 \leq i \leq n}_{1\leq a \leq k_i} \left(\frac {u_{i+1}}q - \frac {z_{ia}q}{u_{i+1}}\right) \label{eqn:deftau 1} \\
&\tau_-(Z) = \prod^{1 \leq i \leq n}_{1\leq a \leq k_i} \left(u_{i} - \frac {z_{ia}}{u_{i}}\right) \label{eqn:deftau 2}
\end{align}
and make the convention that: 
$$
\zeta \left( \frac ZZ \right) \ = \ \prod_{i,j = 1}^{n} \prod_{a \leq k_i, b \leq k_j}^{(i,a) \neq (j,b)} \zeta \left( \frac {z_{ia}}{z_{jb}} \right)
$$
since $\zeta(z/z')$ has a pole at $z=z'$ if $\col z = \col z'$. Similarly, we define:
$$
DZ = \prod^{1 \leq i \leq n}_{1 \leq a \leq k_i} Dz_{ia}, \quad \text{where} \quad Dz = \frac {dz}{2\pi i z}
$$

\begin{definition}
\label{def:normal}
	
For a rational function $F(Z) = F(...,z_{ia},...)^{1 \leq i \leq n}_{1\leq a \leq k_i}$ with poles:
\begin{equation}
\label{eqn:factor denominator}
z_{ia} q^s \oq^t - z_{jb} q^u \oq^v
\end{equation}
for various integers $s,t,u,v$, we define:
\begin{equation}
\label{eqn:intplus}
\int^+ F(Z) DZ = \sum^{\text{functions}}_{\sigma : \{(i,a)\} \rightarrow \{1,-1\}}  \int^{|q|^{\pm 1}, |\oq|^{\pm 1} < 1}_{|z_{ia}| = \gamma^{\sigma(i,a)}  \oq^{-\frac {2i}n}} F(...,z_{ia},...) \prod_{(i,a)} \sigma(i,a) Dz_{ia} \qquad 
\end{equation}
\begin{equation}
\label{eqn:intminus}
\int^- F(Z) DZ = \sum^{\text{functions}}_{\sigma : \{(i,a)\} \rightarrow \{1,-1\}}  \int^{|q|^{\pm 1}, |\oq|^{\pm 1} > 1}_{|z_{ia}| = \gamma^{\sigma(i,a)} \oq^{-\frac {2i}n}} F(...,z_{ia},...) \prod_{(i,a)} \sigma(i,a) Dz_{ia} \qquad
\end{equation}
for some positive real $\gamma \ll 1$. In each summand, each variable $z_{ia}$ is integrated over either a very small circle of radius $\gamma\oq^{\frac {-2i}n}$ or a very large circle of radius $\gamma^{-1} \oq^{\frac {-2i}n}$. The orientation of the contours is such that the residue theorem reads:
\begin{multline}
\int_{|z| = \gamma} f(z) Dz - \int_{|z| = \gamma^{-1}} f(z) Dz = \\ = - \sum_{\text{poles } \alpha \neq \{0,\infty\}} \underset{z = \alpha}{\text{Res}} \ \frac {f(z)}z = \underset{z = 0}{\text{Res}} \ \frac {f(z)}z + \underset{z = \infty}{\text{Res}} \ \frac {f(z)}z \label{eqn:residue theorem}
\end{multline}
The meaning of the superscripts ``$|q|^{\pm 1}, |\oq|^{\pm 1} < 1$" that adorn the integral \eqref{eqn:intplus} is the following. In the summand corresponding to a particular function $\sigma$, if:
$$
\sigma(i,a) = \sigma(j,b) = 1 \qquad (\text{respectively} \quad \sigma(i,a) = \sigma(j,b) = - 1) 
$$
then the variables $z_{ia}$ and $z_{jb}$ are both integrated over the small (respectively large) circle. For any factor of the form \eqref{eqn:factor denominator} appearing in the denominator of $F(Z)$, we make the assumption that $|q|, |\oq| < 1$ (respectively $|q|, |\oq| > 1$) when evaluating the integral via residues.\footnote{As far as Theorem \ref{thm:act} is concerned, where the notion of normal-ordered integrals will be applied, the procedure in Definition \ref{def:normal} will produce the same answer regardless of the relative sizes of $q$ and $\bar{q}$, as long as they are both either $>1$ or $<1$ in absolute value} If $\sigma(i,a) \neq \sigma(j,b)$, then we need not assume anything about the sizes of $q$ and $\oq$. One defines \eqref{eqn:intminus} analogously. \\

\end{definition}

\subsection{} We may now restate Theorem 4.13 of \cite{T} in terms of the shuffle algebra: \\

\begin{theorem}
\label{thm:act}

There is an action $\CA \curvearrowright K$, where elements $R^\pm \in \CA_{\pm \bk}$ act by:
\begin{equation}
\label{eqn:p}
R^\pm * \of = \frac {q^{-|\bk|\delta_\pm^-}}{\bk!} \int^\pm \frac {R^\pm(Z)}{\zeta(Z/Z)} \overline{f(X \mp Z) \Big[ \zeta\left( \frac {Z^{\pm 1}}{X^{\pm 1}} \right)}  \tau_\pm(Z) \Big]^{\pm 1} DZ 
\end{equation}
for all $f(X) \in \Lambda$. The commuting elements $\psi_{i,d}^\pm \in \CA^0$ act by: 
\begin{equation}
\label{eqn:psi1}
\sum_{d=0}^\infty \frac {\psi_{i,d}^\pm}{z^{\pm d}} = \text{multiplication by } q^{\pm i} \left(\frac 1{u^{\pm 1}_i} - \frac {u^{\pm 1}_i}{z^{\pm 1}}\right) \cdot \overline{\zeta \left( \frac Xz \right)}
\end{equation}
expanded around $z^{\pm 1} = \infty$. \\

\end{theorem}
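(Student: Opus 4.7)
The existence of the action $\CA \curvearrowright K$ is automatic: \cite{T} constructs an action of $\UU$ on $K$ by generators and relations, and the shuffle isomorphism \eqref{eqn:iso} transports it to $\CA$. The content of the theorem is the identification of this action with the explicit integral operators \eqref{eqn:p} and the multiplication operators \eqref{eqn:psi1}. The plan is to verify the formulas on a set of generators of $\CA$, and then to promote this to the entire shuffle algebra by showing that \eqref{eqn:p} is compatible with the shuffle product \eqref{eqn:mult}.

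For the Cartan piece \eqref{eqn:psi1}, I would argue by equivariant localization. By Proposition \ref{prop:gen} it suffices to test on tautological classes $\overline{f(X)}$, and by localization to $T$--fixed points of $\CM_\bd$ it suffices to match eigenvalues pointwise. Combining the tangent bundle formula \eqref{eqn:tangentspace} with the expansions \eqref{eqn:zeta1}--\eqref{eqn:zeta2} renders $\overline{\zeta(X/z)}$ at a fixed point as an explicit rational function in $z$, whose coefficients (after multiplication by the prefactor in \eqref{eqn:psi1}) reproduce the eigenvalues of $\psi^\pm_{i,d}$ as listed in \cite{T}.

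For the $\CA^\pm$ halves, I would first check \eqref{eqn:p} on the one-variable generators $R^\pm = z_{i1}^d$. In this case the integral collapses to a single contour integral whose residues at $x_{ia}$--type poles reproduce the matrix coefficients of the Drinfeld generators $e_{i,d}, f_{i,d}$ listed in \cite{T}; the decomposition of contours in Definition \ref{def:normal} into small versus large circles is calibrated so that, via \eqref{eqn:residue theorem}, exactly these residues survive, while the residues at $z=0,\infty$ contribute the framing terms. Since \cite{Tor} proves that $\CA^\pm$ is generated by these one-variable elements, the identification of \eqref{eqn:p} with the action from \cite{T} on all of $\CA^\pm$ will follow from the multiplicativity step below.

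The main obstacle is this shuffle-product compatibility: if operators $\Phi_R$ and $\Phi_{R'}$ are defined on $K$ by \eqref{eqn:p} for $R \in \CA^\pm_{\pm\bk}$ and $R' \in \CA^\pm_{\pm\bk'}$, one must show that their composition equals $\Phi_{R * R'}$. Unpacking the composition $R * (R' * \overline{f(X)})$ yields a double integral over $Z \sqcup Z'$; after the plethystic substitution $X \mapsto X \mp Z$ is applied to the inner integrand, the factor $[\zeta(Z'^{\pm 1}/X^{\pm 1})]^{\pm 1}$ picks up exactly the cross-correction needed to reconstitute the symmetrized product $\prod \zeta(z_{ij}/z'_{i'j'})$ appearing in \eqref{eqn:mult}, while the individual denominators $\zeta(Z/Z)$ and $\zeta(Z'/Z')$ combine with this cross-correction into the joint denominator $\zeta((Z \sqcup Z')/(Z \sqcup Z'))$ demanded by \eqref{eqn:p} applied to $R * R'$. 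The prefactor $1/(\bk! \bk'!)$ and the full $(\bk + \bk')$--variable symmetrization in \eqref{eqn:mult} are absorbed by the invariance of the contours of Definition \ref{def:normal} under relabelling of integration variables. Once multiplicativity is established on each half, the remaining Drinfeld-double cross-relations between $\CA^+$, $\CA^0$, $\CA^-$ reduce to a finite list of residue identities, which are checked against the RTT commutation relations of \cite{T}.
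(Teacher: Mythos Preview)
Your overall strategy coincides with the paper's: establish \eqref{eqn:p} on the one-variable generators $z_{i1}^d$ (by matching with the formulas of \cite{T} in the fixed point basis), and then bootstrap to all of $\CA^\pm$ by showing that the assignment $R \mapsto \Phi_R$ is multiplicative with respect to the shuffle product. The Cartan part and the cross-relations are indeed inherited from \cite{T} once the generator case is matched, so your last sentence is redundant rather than wrong.

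There is, however, a genuine gap in your multiplicativity argument. When you compose $\Phi_{R_1} \circ \Phi_{R_2}$, the iterated application of \eqref{eqn:p} places the $Z_1$ and $Z_2$ variables on \emph{nested} contours (say $|Z_1| = \gamma_1^{\pm 1}$, $|Z_2| = \gamma_2^{\pm 1}$ with $\gamma_2 \ll \gamma_1 \ll 1$), not on the common contours required to invoke ``invariance under relabelling''. The plethystic substitution you describe produces a cross factor $\zeta(Z_2/Z_1)^{-1}$, and this factor has poles separating the two families of contours. Before you can symmetrize you must move the $Z_1$ and $Z_2$ contours together, and you must argue that no residues are picked up in the process. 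This is precisely why Definition \ref{def:normal} imposes the conditions $|q|^{\pm 1}, |\oq|^{\pm 1} < 1$ (resp.\ $>1$) on the pairs of variables lying on the same small (resp.\ large) circle: under those size assumptions, every pole of $\zeta(Z_2/Z_1)^{-1}$ lies on the correct side of the contours, so the deformation is legitimate. Your sentence about the contours being ``calibrated'' applies only to the single-variable residue computation; it does not cover this merging step, and without it the algebraic identity you wrote down between $\zeta(Z_1/Z_1)^{-1}\zeta(Z_2/Z_2)^{-1}\zeta(Z_2/Z_1)^{-1}$ and $\zeta(Z_1/Z_2)/\zeta(Z/Z)$ cannot be promoted to an equality of integrals. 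Once you insert this contour-moving argument, your proof becomes the paper's proof.
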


\begin{remark}
\label{rem:lowest}

The central element $c$ acts by $q^n \oq$, and the leading term of \eqref{eqn:psi1} is:
\begin{equation}
\label{eqn:psi2}
\psi_i = \text{multiplication by } \frac {q^{i + d_{i-1} - d_i}}{u_i}
\end{equation}
on the graded component $K_\bd \subset K$. This tells us that the various $K_\bd$ are the weight subspaces of $K$, and the weights are prescribed by the integers $d_1,...,d_n$ and the equivariant parameters $u_1,...,u_n$. \\	
\end{remark}

\begin{proof} When $R = z_{i1}^d$, for various $i\in \{1,...,n\}$ and $d\in \BZ$, formula \eqref{eqn:p} reads:
\begin{equation}
\label{eqn:z formula}
\left(z_{i1}^d\right)^\pm * \of = 
\end{equation}
$$
= q^{-\delta_\pm^-} \sum_{\pm'} \pm' \int_{|z_{i1}| = \gamma^{\pm' 1} \oq^{-\frac {2i}n}} z_{i1}^d \cdot \overline{f(X \mp z_{i1}) \Big[ \zeta \left ( \frac {z^{\pm 1}_{i1}}{X^{\pm 1}} \right)} \tau_\pm(z_{i1}) \Big]^{\pm 1} Dz_{i1} 
$$
(because we are integrating a function of a single variable in \eqref{eqn:z formula}, we do not need to assume anything about the sizes of $q$ and $\oq$). Using the formal series $\delta(z) = \sum_{d \in \BZ} z^d$, one may repackage the formula above as:
\begin{equation}
\label{eqn:e}
\delta\left(\frac {z_{i1}}z \right)^\pm * \of = q^{-\delta_\pm^-} \overline{f(X\mp z) \Big[ \zeta \left( \frac {z^{\pm 1}}{X^{\pm 1}} \right)} \tau_\pm(z) \Big]^{\pm 1}
\end{equation}
for any $i \in \{1,...,n\}$. We claim that formulas \eqref{eqn:psi1} and \eqref{eqn:e} induce an action: 
\begin{equation}
\label{eqn:action toroidal}
\UU \curvearrowright K
\end{equation}
Indeed, the defining relations in the algebra $\UU$ are spelled out in (2.70), (2.71), (2.72), (2.81), (2.82), (2.83) of \cite{Tor}, and the proof that they are satisfied by \eqref{eqn:psi1} and \eqref{eqn:e} is analogous to Theorem II.9 of \cite{thesis}. Alternatively, using Proposition \ref{prop:coeffs}, one can show that formulas \eqref{eqn:e} are equivalent (in the basis of torus fixed points that we will recall in Subsection \ref{sub:fixed point basis}) with the formulas for the action \eqref{eqn:action toroidal} defined previously in \cite{T}. By invoking \eqref{eqn:iso}, we thus obtain an action $\CA \curvearrowright K$. So far, we have showed that this action is given by \eqref{eqn:p} when $R = z_{i1}^d$ is a rational function in a single variable. Because the shuffle algebras $\CA^\pm$ are generated by the $z_{i1}^d$'s, it remains to show that formula \eqref{eqn:p} is multiplicative, i.e.:
$$
\text{if \eqref{eqn:p} holds for }R_1^\pm \text{ and }R_2^\pm, \text{ then \eqref{eqn:p} holds for }R_1^\pm * R_2^\pm
$$
We will prove this assertion in the case $\pm = +$, as the $\pm = -$ case is analogous. Assume $\gamma_2 \ll \gamma_1 \ll 1$ are positive real numbers. Applying \eqref{eqn:p} twice gives us:
\begin{multline*}
R^+_1 * R^+_2 * \of = \frac 1{\bk_1!\bk_2!} \int^{+}_{|Z_1| = \gamma_1^{\pm 1}, |Z_2|=\gamma_2^{\pm 1}} \frac {R^+_1(Z_1)R_2^+(Z_2)}{\zeta(Z_1 / Z_1)\zeta(Z_2 / Z_2)} \cdot \\ \cdot \overline{f(X-Z_1-Z_2) \zeta \left( \frac {Z_1}X \right) \zeta \left( \frac {Z_2}X \right)} \zeta \left( \frac {Z_2}{Z_1} \right)^{-1} \tau_+(Z_1)\tau_+(Z_2) DZ_1DZ_2 
\end{multline*}
where $|Z| = \gamma$ is shorthand for $|z_{ia}| = \gamma \oq^{-\frac {2i}n}$ for all $(i,a)$. The poles that involve both $Z_1$ and $Z_2$ in the integral all come from the rational function $\zeta(Z_2/Z_1)^{-1}$. The choice of contours and the assumption on the parameters $q,\oq$ in \eqref{eqn:intplus} were made in such a way that these poles do not hinder us from moving the contours together, i.e. to assume $\gamma_1 = \gamma_2 =: \gamma \ll 1$. With this in mind, $R^+_1 * R^+_2 * \of$ becomes:
$$
 \frac 1{\bk_1!\bk_2!} \int^{+}_{|Z_1| = |Z_2| = \gamma^{\pm 1}} \frac {R^+_1(Z_1)R_2^+(Z_2)\zeta \left( \frac {Z_1}{Z_2} \right)}{\zeta(Z/Z)} \cdot \overline{f(X-Z) \zeta\left(\frac ZX \right)} \tau_+(Z) DZ
$$
Since the contours are symmetric, we can replace the integrand by its symmetrization in the $Z$ variables, which precisely amounts to \eqref{eqn:p} for $R_1^+ * R_2^+$. 
	
\end{proof}
	
\subsection{}\label{sub:fixedpoints}

Let us now describe the fixed points of the action $T \curvearrowright \CM_\bd$. Because \eqref{eqn:fixedlocus} realizes Laumon spaces as $\BZ/n\BZ-$fixed loci of $\CN_{|\bd|}$, and $\BZ/n\BZ \subset T$, such points are among the torus fixed points of the moduli space of torsion-free sheaves $\CN_{|\bd|}$. It is well-known that the latter are indexed by $n-$tuples of partitions:
\begin{equation}
\label{eqn:tuple}
\bla = (\la^1,...,\la^{n}) \qquad \text{where} \qquad \la^i = (\la^i_0 \geq \la^i_1 \geq ...) 
\end{equation}
are usual partitions whose sizes sum up to $|\bd|$. Recall that partitions are in one-to-one correspondence with Young diagrams (see \cite{thesis} for an introduction to the terminology of partitions), so we will often say ``the boxes of the partition $\la^i$" instead of ``the boxes of the Young diagram corresponding to the partition $\la^i$". By extension, we will refer to the boxes of $\bla$ as the disjoint union of the boxes of the constituent partitions $\la^i$. The quadruple $(X,Y,A,B) \in \CN_{|\bd|}$ that corresponds to $\bla$ has:
$$
V = \bigoplus_{\sq \in \bla} \BC \cdot v_\sq
$$ 
and: 
\begin{equation}
\label{eqn:enrique1}
X \cdot v_{\sq} = v_{\text{box directly to the right of }\sq}
\end{equation}
\begin{equation}
\label{eqn:enrique2}
Y \cdot v_{\sq} = v_{\text{box directly above }\sq} 
\end{equation}
\begin{equation}
\label{eqn:enrique3}
A \cdot w_i = v_{\text{corner of }\la^i}
\end{equation}
as well as $B=0$. The same description applies to $\bla$ as a fixed point of $\CM_\bd$, where:
\begin{equation}
\label{eqn:iglesias}
V_i = \bigoplus_{k=1}^{n} \  \bigoplus^{i \equiv y+k \text{ mod }n}_{\sq = (x,y) \in \la^k} \BC \cdot v_\sq
\end{equation}
The maps $X_i,Y_i,A_i$ are given by restricting \eqref{eqn:enrique1}--\eqref{eqn:enrique3} to the subspaces \eqref{eqn:iglesias}, according to Figure \ref{pic:chainsaw}. To make the definition \eqref{eqn:iglesias} simpler, for the box $\sq = (x,y)$ in the partition $\la^k$, we will refer to $y+k$ as the {\bf color} of $\sq$. Then a basis of $V_i$ is given by those boxes of color $i$ mod $n$ in the $n-$tuple of partitions $\bla$. Write:
$$
\bla \vdash \bd \quad \Leftrightarrow \quad |\bla| = \bd
$$
where $\bd = (d_1,...,d_n)$ is the $n$--tuple of non-negative integers defined by the property that $\bla$ contains $d_i$ boxes of color $\equiv i$ modulo $n$, for all $i\in \{1,...,n\}$. \\

\subsection{}\label{sub:fixed point basis}

Recall the following equivariant localization formula, true for any smooth variety $X \curvearrowleft T$ acted on by a torus with isolated fixed points:
\begin{equation}
\label{eqn:eqloc0}
c = \sum_{x\in X^{T}} \frac {c|_x \cdot [x]}{[\wedge^\bullet(T_x^\vee X)]}
\end{equation}
where $[x] \in K_{T}^{\text{int}}(X)$ denotes the class of the skyscraper sheaf at $x$. For notational convenience, we renormalize $[x]$ by the exterior class of the tangent bundle:
\begin{equation}
\label{eqn:renormalize}
|x \rangle := \frac {[x]}{[\wedge^\bullet(T_x^\vee X)]} \ \in  \ K_{T}(X) := K_{T}(X)_{\text{loc}}
\end{equation}
in terms of which the equivariant localization formula becomes:
\begin{equation}
\label{eqn:eqloc}
c = \sum_{x\in X^{T}} c|_x \cdot |x\rangle \qquad \forall c \in K_{T}(X)
\end{equation}
In order to use this formula for $X = \CM_\bd$, we recall from the previous Subsection that the fixed points are indexed by $n$--tuples of partitions $\bla$. Therefore, we have: \\

\begin{proposition}
\label{prop:basis}

As a $\BF_\bu = \emph{Frac}(K_T(\emph{pt}))$ vector space, we have:
$$
K_\bd = \bigoplus_{\bla \vdash \bd} \BF_\bu \cdot |\bla \rangle
$$
for all $\bd \in \nn$. Summing over all $\bd$, we conclude that:
$$
K = \bigoplus_{\bla} \BF_\bu \cdot |\bla \rangle
$$
where the direct sum goes over all $n$--tuples of partitions. \\

\end{proposition}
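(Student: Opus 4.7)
The plan is to deduce this from equivariant $K$-theoretic localization (Thomason's theorem): for a smooth quasi-projective $T$-variety $X$ whose fixed locus $X^T$ is finite and isolated, the skyscraper classes $\{[x]\}_{x \in X^T}$ form an $\BF_\bu$-basis of the localized equivariant $K$-theory $K_T(X)_{\text{loc}}$.

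I would first verify the hypotheses for $X = \CM_\bd$. Smoothness and quasi-projectivity follow from the GIT presentation \eqref{eqn:adhm1} recalled in Subsection \ref{sub:quiver2}: the moment map $\nu$ is a submersion on the stable locus, and the quotient is geometric. The $T$-fixed locus $\CM_\bd^T$ is explicitly described in Subsection \ref{sub:fixedpoints} as the finite set of $n$-tuples of partitions $\bla \vdash \bd$, hence is manifestly isolated. Thomason's theorem then furnishes a basis of $K_\bd$ indexed by these tuples.

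The final step is to pass from the skyscraper classes $[\bla]$ to their normalizations $|\bla\rangle$ from \eqref{eqn:renormalize}. This is a scalar change of basis in $\BF_\bu$, and is valid provided each Euler class $[\wedge^\bullet T_\bla^\vee \CM_\bd]$ is invertible in $\BF_\bu$. Writing $T_\bla \CM_\bd = \bigoplus_\alpha \chi_\alpha$ as a sum of $T$-characters, this Euler class equals $\prod_\alpha (1 - \chi_\alpha^{-1})$, so invertibility reduces to showing that no tangent character $\chi_\alpha$ at $\bla$ is trivial. This is equivalent to isolatedness of $\bla$; alternatively, one can read off each tangent weight as a nontrivial monomial in $u_1, \ldots, u_n, q, \oq$ by specializing the tautological classes $\CV_i$ in \eqref{eqn:tangentspace} via \eqref{eqn:iglesias}, and such monomials are nontrivial in $\BF_\bu = \BQ(u_1, \ldots, u_n, q, \oq)$ by algebraic independence of the equivariant parameters. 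Summing the resulting statement over all $\bd \in \nn$ in accordance with \eqref{eqn:ktheory} yields the assertion for $K$. The only point requiring explicit attention is this Euler class nonvanishing, which I expect to be the main (and entirely routine) obstacle: it is a transparent consequence of the explicit tangent character formula \eqref{eqn:tangentspace} once tautological classes are evaluated at fixed points.
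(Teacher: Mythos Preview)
Your proposal is correct and follows the same approach as the paper: both deduce the statement from the equivariant localization formula \eqref{eqn:eqloc} applied to the smooth variety $\CM_\bd$ with its isolated $T$-fixed points. In fact the paper does not supply a separate proof of this proposition at all --- it is stated with a ``Therefore, we have:'' immediately after \eqref{eqn:eqloc}, treating it as an immediate corollary --- so your write-up, which verifies smoothness, isolatedness, and invertibility of the Euler classes, is more detailed than what the paper itself provides.
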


\begin{proof} {\bf of Theorem \ref{thm:main}:} According to \eqref{eqn:root e} and \eqref{eqn:root f}, the assignment:
\begin{align*}
&e_{[i;j)} \leadsto E_{[i;j)} = S_1^+ \curvearrowright K \\
&f_{[i;j)} \leadsto F_{[i;j)} = T_1^- \curvearrowright K
\end{align*}
yields an action of $\uu \hookrightarrow \CA$ on $K$ (the fact that this action is ``geometric", i.e. given by explicit correspondences, will be proved as part of Theorem \ref{thm:geom}). Moreover, the Cartan elements $\psi_i$ of the quantum affine algebra act on $K$ by formulas \eqref{eqn:psi2}. The unit class $|\emptyset \rangle \in K_{(0,...,0)} = K_T(\pt)$ is a lowest weight vector, in the sense that:
\begin{align}
f_{[i;j)} \cdot |\emptyset \rangle &= 0 \label{eqn:lowest 1} \\
\psi_i \cdot |\emptyset \rangle &= \frac {q^i}{u_i} |\emptyset \rangle \label{eqn:lowest 2}
\end{align}
Letting $M$ denote the universal Verma module of $\uu$ over $\BF_\bu$, which is generated by a lowest weight vector $|\emptyset \rangle$ satisfying properties \eqref{eqn:lowest 1}--\eqref{eqn:lowest 2}, we obtain a morphism:
$$
M \rightarrow K
$$
of $\uu-$modules. Because the universal Verma module is irreducible for generic $u_1,...,u_n$, proving that $M \cong K$ reduces to showing that the modules $M$ and $K$ have the same dimension in every graded component. As a graded vector space:
$$
M \cong \uup
$$
and so that the dimension of $M_\bd$ equals the number of partitions of the degree vector $\bd$ into arcs $[i;j)$ (\cite{Tor}). The dimension of $K_\bd$ is the number of fixed points of Laumon spaces, which were interpreted in \cite{FFNR} as collections $(d_{j,i})_{i \leq j \in \BZ}$ such that:
$$
d_{j,i} \geq d_{j',i} \text{ if } j \leq j', \qquad d_{j+n,i+n} = d_{j,i}, \qquad d_{j,i} = 0 \text{ for } j-i \gg 0, \qquad d_j = \sum_{i \leq j} d_{j,i}
$$
To such a collection, we associate the following partition of $\bd$ into arcs:
$$
\bd = \sum_{i<j} [i;j) \cdot (d_{j-1,i} - d_{j,i}) 
$$
Conversely, to a partition of $\bd$ into arcs, we associate the collection:
$$
d_{j,i} = \# \text{ of arcs } [i;a) \text{ with } a > j
$$
It is easy to see that these two assignments are inverses of each other, and therefore produce the required bijection between partitions of $\bd$ into arcs and collections $(d_{j,i})$. Therefore $\dim M_\bd = \dim K_\bd$, $\forall \bd \in \nn$, and so we conclude that $M \cong K$. \\
\end{proof}

\subsection{}\label{sub:restrictions}

Later in our paper, we will need the restriction of important $K-$theory classes to the fixed points $\bla$. For the tautological vector bundle $\CV_i$, we have:
\begin{equation}
\label{eqn:tautfixed0}
\CV_i|_\bla = \sum_{k=1}^{n} \ \sum_{\sq = (x,y) \in \la^k}^{y + k \equiv i \text{ mod }n} \chi_\sq \cdot \oq^{2 \frac {y+k-i}n} 
\end{equation}
where the {\bf weight} of a box $\sq = (x,y)$ situated in the partition $\la^k \in \bla$ is:
\begin{equation}
\label{eqn:weight}
\chi_\sq := u_k^2 q^{2x}
\end{equation}
Along the same lines, the tautological class associated to any $f\in \Lambda$ has restriction to the fixed point $\bla$ given by:
\begin{equation}
\label{eqn:tautfixed1}
\of|_\bla = f(\bla) := f \left(...,\chi_\sq,... \right)_{\sq \in \bla}
\end{equation}
In the right-hand side, for any box $\sq \in \bla$ of color $i$, we plug the weight $\chi_\sq$ into an argument of the function $f$ of color $\bari$ (one must remember to take into account rule \eqref{eqn:color change} to change colors within a certain residue class modulo $n$). \\

\begin{proof} {\bf of Proposition \ref{prop:gen}}: By Proposition \ref{prop:basis}, the classes $|\bla\rangle$ span the $\BF_\bu$-vector space $K_\bd$. Then Proposition \ref{prop:gen} is a consequence of the following: \\
		
\begin{claim} Consider any vector space $V = \textrm{span}(v_1,...,v_p)$. Take a vector $v = \sum \alpha_i v_i$ with all $\alpha_i \neq 0$, and a collection of endomorphisms $A_1,...,A_n$ that are diagonal in the basis $\{v_1,...,v_p\}$. The collection of vectors:
$$
A_1^{\beta_1}... A_n^{\beta_n}\cdot v, \quad \beta_1,...,\beta_n \in \BN
$$ 
generates the vector space $V$ if for any two different basis vectors $v_i$ and $v_j$, there exists an $l\in \{1,...,n\}$ such that $A_l$ has distinct eigenvalues on $v_i$ and $v_j$. \\
\end{claim}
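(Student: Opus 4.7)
The plan is to exhibit, for each index $i\in\{1,\ldots,p\}$, a polynomial $P_i\in\BC[A_1,\ldots,A_n]$ whose action on the basis satisfies $P_i v_k = \delta_{ik}\,v_i$. Once such projectors are in hand, the Claim follows at once: the subspace
$$ W := \textrm{span}\bigl\{A_1^{\beta_1}\cdots A_n^{\beta_n} v \ : \ \beta_1,\ldots,\beta_n \in \BN\bigr\} \subseteq V $$
is by construction closed under multiplication by any polynomial in the $A_l$'s, so $P_i v = \alpha_i v_i$ lies in $W$. Since $\alpha_i\neq 0$, this forces $v_i \in W$ for every $i$, and hence $W = V$, which is the desired conclusion.

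To build the $P_i$ I will use Lagrange interpolation. Writing $\lambda_{l,k}$ for the eigenvalue of $A_l$ on $v_k$, the hypothesis supplies, for every pair $j\neq i$, an index $l(i,j) \in \{1,\ldots,n\}$ with $\lambda_{l(i,j),i}\neq\lambda_{l(i,j),j}$. The single factor
$$ \frac{A_{l(i,j)} - \lambda_{l(i,j),j}}{\lambda_{l(i,j),i} - \lambda_{l(i,j),j}} \ \in \ \BC[A_1,\ldots,A_n] $$
is then well-defined, fixes $v_i$, and annihilates $v_j$. Taking the product of these factors over all $j\neq i$ yields the desired $P_i$; the factors commute (they are polynomials in the commuting family $A_1,\ldots,A_n$), each acts diagonally in the basis $\{v_k\}$, and the identity $P_i v_k = \delta_{ik} v_i$ then reduces to a one-line verification.

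The proof is short and presents no genuine obstacle. The only subtlety worth noting is that the indices $l(i,j)$ may depend on the specific pair $(i,j)$ rather than being a single fixed $l$, so $P_i$ is in general a product of factors drawn from several different $A_l$'s; since the family commutes, this causes no issue.
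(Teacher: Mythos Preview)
Your proof is correct and is essentially the constructive form of the argument the paper has in mind: the paper does not spell out a proof at all, merely calling it ``an easy exercise which relies on the fact that the Vandermonde determinant is non-zero,'' and your Lagrange interpolation construction of the projectors $P_i$ is precisely the standard way to cash out that Vandermonde remark. There is nothing to add.
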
 

\noindent The claim is an easy exercise which relies on the fact that the Vandermonde determinant is non-zero. We will apply this situation when $V=K_\bd$, $v=\overline{1}$ is the class of the structure sheaf, and $\{v_1,...,v_p\}$ is the basis of torus fixed points. We will choose $A_k$ to be the operator of multiplication by the class $[\CV_k]$. Since for any two distinct fixed points $\bla \neq \bmu$ of $\CM_\bd$, there will be a box $\sq \in \blamu$, this ensures that the hypothesis of the Claim holds. 

\end{proof}

\subsection{}\label{sub:funkyzeta}

In the current Subsection, we will study the color-symmetric rational functions \eqref{eqn:zeta1} and \eqref{eqn:zeta2}, and particularly the $K$--theory classes they give rise to under the map \eqref{eqn:kirwan}. In Theorem \ref{thm:act}, these $K$--theory classes are multiplied by the power series $\tau_\pm(z)^{\pm 1}$, and the following Proposition computes their restrictions to the fixed points. We will use multiplicative notation akin to \eqref{eqn:tautfixed1}, specifically:
\begin{equation}
\label{eqn:not}
\zeta \left(\frac z{\chi_\bla} \right) := \prod_{\sq \in \bla} \zeta \left(\frac z{\chi_\sq} \right)
\end{equation}
Given an $n$--tuple of partitions $\bla$, an \textbf{inner corner} will refer to any box $\sq \notin \bla$ such that $\bla \sqcup \sq$ is a valid $n$--tuple of partitions. Similarly, an \textbf{outer corner} of $\bla$ refers to any box $\sq \in \bla$ such that $\bla \backslash \sq$ is a valid $n$--tuple of partitions. \\

\begin{proposition}
\label{prop:restrictions}

For any variable $z$ and for any $n-$tuple of partitions $\bla$:
\begin{align}
\zeta \left( \frac z{\chi_\bla} \right) \tau_+(z) &= \frac {\prod^{\ecol \sq = \ecol z +1}_{\sq \emph{ inner corner of }\bla} \left(\frac {\sqrt{\chi_\sq}}q - \frac {zq}{\sqrt{\chi_\sq}} \right)}{\prod^{\ecol \sq = \ecol z}_{\sq \emph{ outer corner of }\bla} \left(\sqrt{\chi_\sq} - \frac {z}{\sqrt{\chi_\sq}} \right)} \label{eqn:Gamma+} \\ \zeta \left( \frac {\chi_\bla} z \right)^{-1} \tau_-(z)^{-1} &= \frac {\prod^{\ecol \sq = \ecol z - 1}_{\sq \emph{ outer corner of }\bla}\left(\sqrt{\chi_\sq}q - \frac {z}{\sqrt{\chi_\sq}q} \right)}{\prod^{\ecol \sq = \ecol z}_{\sq \emph{ inner corner of }\bla} \left(\sqrt{\chi_\sq} - \frac {z}{\sqrt{\chi_\sq}} \right)} \label{eqn:Gamma-}
\end{align}
where the outer/inner corners of a partition $\bla$ are those boxes which can be removed/added from/to $\bla$ in such a way as to produce another valid partition. \\

\end{proposition}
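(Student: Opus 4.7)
The plan is to prove both identities \eqref{eqn:Gamma+} and \eqref{eqn:Gamma-} by induction on $|\bla|$. For the base case $\bla = \emptyset$, the left-hand side of \eqref{eqn:Gamma+} equals $\tau_+(z) = u_{i+1}/q - zq/u_{i+1}$ (writing $i = \col z$), and the right-hand side has no outer corners in the denominator and a single numerator factor coming from the unique inner corner $(0,0)$ of $\la^{\overline{i+1}}$ of weight $u_{i+1}^2$, yielding the same expression; the base case for \eqref{eqn:Gamma-} is analogous.

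For the inductive step, fix an inner corner $\bsq = (x,y) \in \la^k$ of $\bla$, of color $c = y+k$, and pass to $\bla \sqcup \bsq$. This multiplies the left-hand side of \eqref{eqn:Gamma+} by $\zeta(z/\chi_\bsq)$, so the goal is to verify the same factor arises on the right-hand side. Split on the residue class of $c$ modulo $n$. If $c \equiv \col z \bmod n$, then $\bsq$ becomes a new outer corner of the relevant color, contributing $\sqrt{\chi_\bsq} - z/\sqrt{\chi_\bsq}$ to the denominator; basic partition combinatorics shows that exactly one of two further changes occurs (mutually exclusive and exhaustive because $\la^k_{y+1} \leq \la^k_y = x$): either $(x, y+1)$ becomes a new inner corner of color $\col z + 1$ when $\la^k_{y+1} = x$, or the left neighbor $(x-1, y)$ ceases to be an outer corner of color $\col z$ when $\la^k_{y+1} \leq x-1$. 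Using the elementary identity $\sqrt{q^2\chi}/q - zq/\sqrt{q^2\chi} = \sqrt{\chi} - z/\sqrt{\chi}$, both sub-cases contribute an additional factor $\sqrt{\chi_\bsq}/q - zq/\sqrt{\chi_\bsq}$ (either as a new inner corner of weight $\chi_\bsq$ in the numerator, or by removing the outer corner of weight $\chi_\bsq/q^2$ from the denominator), so the net change in the right-hand side is $\zeta(z/\chi_\bsq)$. The case $c \equiv \col z + 1 \bmod n$ is analogous: $\bsq$ is removed from the numerator, while either $(x+1, y)$ becomes a new inner corner of weight $q^2 \chi_\bsq$, or $(x, y-1)$ ceases to be an outer corner of weight $\chi_\bsq$; both contribute the same factor $\sqrt{\chi_\bsq} - z/\sqrt{\chi_\bsq}$, so the net change is again $\zeta(z/\chi_\bsq)$. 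Otherwise, $c$ lies in neither residue class, and neither $\bsq$ nor any of its neighbors affects corners of colors $\col z$ or $\col z + 1$, while simultaneously $\zeta(z/\chi_\bsq) = 1$.

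The main obstacle is the case-by-case verification of the mutually-exclusive/exhaustive dichotomy above, which relies on the staircase structure of Young diagrams together with the fact (apparent from \eqref{eqn:weight}) that $\chi_\sq$ depends only on the column of $\sq$, so vertically adjacent boxes share the same weight while horizontally adjacent boxes differ by $q^2$. The companion identity \eqref{eqn:Gamma-} is proved by an entirely parallel argument, with the roles of $+$ and $-$ (and correspondingly the roles of inner and outer corners) interchanged throughout.
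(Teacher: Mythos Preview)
Your inductive argument is correct and complete: the base case is immediate, and in the inductive step your dichotomy (either the vertical neighbor becomes a new inner corner, or the horizontal neighbor ceases to be an outer corner) really is mutually exclusive and exhaustive, because the weight formula \eqref{eqn:weight} makes horizontally adjacent boxes differ by $q^2$ while vertically adjacent boxes share the same weight. The remaining corner changes (those at $(x+1,y)$ and $(x,y-1)$) have colors $c$ and $c-1$ respectively, so they never touch the sets appearing in \eqref{eqn:Gamma+}; your ``otherwise'' case is therefore genuinely vacuous.

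The paper takes a different route: it writes out $\zeta(z/\chi_{\bla})\tau_+(z)$ explicitly as a product over all boxes of $\bla$ of colors $i$ and $i+1$, groups these boxes by row, and observes that the resulting product over rows telescopes in the row index $y$, leaving only factors at those $y$ with $\la^k_y > \la^k_{y+1}$ --- precisely the rows carrying an outer corner of color $i$ and an inner corner of color $i+1$. The paper's computation is a single closed-form manipulation and makes the telescoping structure visible; your induction is more elementary and avoids tracking the row-by-row product, at the cost of the four-way case analysis on neighboring boxes. Both arguments ultimately rest on the same combinatorial fact (that corners are detected by jumps $\la^k_y > \la^k_{y+1}$), just packaged differently.
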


\begin{proof} Let us first prove \eqref{eqn:Gamma+} and leave \eqref{eqn:Gamma-} as an exercise for the interested reader. Write $i = \col z$, and let us recall that the rational function $\zeta(z/X)$ is explicitly given by formula \eqref{eqn:zeta1}. The corresponding $K-$theory class comes about by replacing the variables $x_{ia}$ by the Chern roots of the tautological vector bundle $\CV_i$. When one restricts this to the fixed point $\bla$, one must replace these Chern roots by the weights of the boxes of color $i$ in $\bla$ (one must remember to apply rule \eqref{eqn:color change} to change the colors of boxes within a single residue class modulo $n$). We conclude:
$$
\zeta \left( \frac z {\chi_\bla} \right) \tau_+(z)  = f\left(\frac {u_{i+1}}q \right) \prod_{\sq \in \bla}^{\col \sq = i} \frac {f\left(\sqrt{\chi_\sq} /q\right)}{f\left(\sqrt{\chi_\sq} \right)} \prod_{\sq \in \bla}^{\col \sq = i+1} \frac {f\left(\sqrt{\chi_\sq}\right)}{f\left(\sqrt{\chi_\sq}/q\right)}
$$	
where we abbreviate $f(x) = x - \frac zx$. The boxes in the $y-$th row of $\la^k$ have weights:
$$
u_k^2 \oq^{2\left \lfloor \frac {y+k-1}n \right \rfloor},..., u_k^2 q^{2(\la^k_y - 1)} \oq^{2\left \lfloor \frac {y+k-1}n \right \rfloor}
$$
if we think of them as having color $i = \overline{y+k}$. Therefore, we have:
$$
\zeta \left( \frac z{\chi_\bla} \right) \tau_+(z)  = f \left(\frac {u_{i+1}}q\right) \prod_{y \equiv i-k}^{1 \leq k \leq n} \frac {f \left( u_k q^{-1} \oq^{\frac {y+k-i}n} \right)}{f \left(u_k q^{\la^k_y-1}\oq^{\frac {y+k-i}n} \right)} \prod_{y \equiv i+1-k}^{1 \leq k \leq n} \frac {f \left(u_k q^{\la^k_y-1} \oq^{\frac {y+k-i-1}n} \right)}{f \left( u_k q^{-1} \oq^{\frac {y+k-i-1}n} \right)} 
$$
Changing the index $y \mapsto y+1$ in the last product implies the equation above is:
$$
= f \left(\frac {u_{i+1}}q\right) \prod_{y \equiv i-k, \ y \geq 0}^{1 \leq k \leq n} \frac {f \left( u_k q^{-1} \oq^{\frac {y+k-i}n} \right) }{f \left(u_k q^{\la^k_y-1}\oq^{\frac {y+k-i}n} \right)} \prod_{y \equiv i-k, \ y \geq -1}^{1 \leq k \leq n} \frac {f \left(u_k q^{\la^k_{y+1}-1}\oq^{\frac {y+k-i}n} \right)}{f \left( u_k q^{-1} \oq^{\frac {y+k-i}n} \right)} = 
$$
$$
= \prod_{y \equiv i-k}^{1 \leq k \leq n} \frac {f \left(u_k q^{\la^k_{y+1}-1}\oq^{\frac {y+k-i}n} \right) }{f \left(u_k q^{\la^k_y-1}\oq^{\frac {y+k-i}n} \right)} 
$$
The fraction on the second row equals 1 when $\la_y^k = \la_{y+1}^k$, so it can be $\neq 1$ only if $\la_y^k > \la_{y+1}^k$. This corresponds to the existence of an outer corner of color $i$ on the $y$-th row, and an inner corner of color $i+1$ on the $(y+1)$-th row, and thus contributes precisely the fraction claimed in \eqref{eqn:Gamma+}. \\
\end{proof}

\subsection{}\label{sub:actionfixed}

For any pair of usual partitions $\la$ and $\mu$, we will write:
$$
\la \geq \mu \qquad \text{if} \qquad \la_i \geq \mu_i, \ \ \forall i \geq 0
$$
In other words, $\la \geq \mu$ if the Young diagram of $\la$ contains that of $\mu$. In this case, the set of boxes $\lamu$ is called a {\bf skew Young diagram}. The analogous notions apply for fixed points \eqref{eqn:tuple}, which are nothing but $n-$tuples of usual partitions: 
$$
\bla \geq \bmu \qquad \text{if} \qquad \la^1 \geq \mu^1, \ ..., \ \la^{n} \geq \mu^{n}
$$
and so $\blamu$ can be thought of as an $n-$tuple of skew Young diagrams. We will think of such skew Young diagrams as sets of boxes colored modulo $n$. We will write $|\blamu| = (k_1,...,k_{n}) \in \nn$ if there are $k_i$ boxes of color $i$ modulo $n$ in $\blamu$. By analogy with \eqref{eqn:tautfixed1}, we will write:
$$
R(\blamu) \ := \ R(...,\chi_\sq,...)_{\sq \in \blamu}
$$
for all color-symmetric rational functions $R$ of degree $|\blamu|$. As before, one must plug the weight $\chi_\sq$ into an argument of $R$ of the same color as $\sq$. We are now ready to write down the matrix coefficients of the operators $R^\pm$ of Theorem \ref{thm:act} in the basis of renormalized fixed points $|\bla\rangle \in K$. \\

\begin{proposition}
\label{prop:coeffs}
	
For any shuffle element $R^\pm \in \CA_{\pm \bk} \curvearrowright K$, we have:
\begin{align}
&\langle \bla | R^+ | \bmu \rangle =  R^+(\blamu) \prod_{\bsq \in \blamu} \left[\left(q^{-1} - q \right) \zeta \left( \frac {\chi_\bsq}{\chi_\bmu} \right) \tau_+(\chi_\bsq) \right] \label{eqn:coeff+} \\ &\langle \bmu | R^- | \bla \rangle = R^-(\blamu) \prod_{\bsq \in \blamu} \left[\left(1 - q^{-2} \right) \zeta \left( \frac {\chi_\bla}{\chi_\bsq} \right)^{-1} \tau_-(\chi_\bsq)^{-1} \right] \label{eqn:coeff-}
\end{align}
The right-hand sides only make sense if $\bla \geq \bmu$ and $|\blamu| = \bk$. If either of these conditions fails to hold, then the corresponding matrix coefficient of $R^\pm$ is 0. \\
	
\end{proposition}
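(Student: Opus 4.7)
The plan is to compute $\langle\bla|R^+|\bmu\rangle$ directly from the integral formula \eqref{eqn:p} by representing $|\bmu\rangle$ as a tautological class and then restricting the result to the fixed point $\bla$. By Proposition \ref{prop:gen} together with the equivariant localization formula \eqref{eqn:eqloc}, one may choose a color-symmetric Laurent polynomial $g_\bmu\in\Lambda_{|\bmu|}$ with $\overline{g_\bmu}=|\bmu\rangle$, i.e. $g_\bmu(\bnu)=\delta_{\bmu,\bnu}$ on every fixed point $\bnu$ of $\CM_{|\bmu|}$. Plugging $f=g_\bmu$ into \eqref{eqn:p}, restricting via $\overline{h(X-Z)}|_\bla=h(\bla-Z)$ and \eqref{eqn:tautfixed1}, and noting that $R^+\in\CA_\bk$ shifts degree by $\bk$ (so the matrix coefficient automatically vanishes unless $|\bla|=|\bmu|+\bk$, accounting for the condition $|\blamu|=\bk$), one reduces the problem to evaluating
$$\langle\bla|R^+|\bmu\rangle=\frac{1}{\bk!}\int^+ \frac{R^+(Z)}{\zeta(Z/Z)}\,g_\bmu(\bla-Z)\,\zeta(Z/\chi_\bla)\,\tau_+(Z)\,DZ.$$

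The next step is an iterated residue computation. The poles of the integrand in each $z_{ia}$ arise from $\zeta(Z/\chi_\bla)$, from the shuffle denominator of $R^+(Z)$, and from zeros of $\zeta(Z/Z)$. Following the standard shuffle-algebra argument (the parallel calculation for Nakajima cyclic quiver varieties is carried out in \cite{thesis}), the wheel conditions \eqref{eqn:wheel} together with the zeros of $\zeta(z/w)$ at $z=wq^{-2}$ cause every iterated residue to vanish except those in which the variables in $Z$ successively specialize to $\chi_\bsq$ for $\bsq$ ranging over the outer corners of an expanding subpartition of $\bla$. Thus the surviving residues are parameterized by linear orderings of the boxes of a skew shape $\blanu$ with $\bla\geq\bnu$ and $|\blanu|=\bk$.

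At such a residue, the factor $g_\bmu(\bla-Z)$ evaluates to $g_\bmu(\bnu)=\delta_{\bmu,\bnu}$, forcing $\bnu=\bmu$ and hence $\bla\geq\bmu$ as in the statement. The $\bk!$ orderings of the boxes of $\blamu$ contribute equally by color-symmetry of $R^+$, absorbing the $\frac{1}{\bk!}$ prefactor. The simple-pole residues of $\zeta(z/w)$ at same-color coincidences produce the factor $(q^{-1}-q)^{|\bk|}$, while the substitution $Z\mapsto\chi_\blamu$ in $R^+(Z)$ yields $R^+(\blamu)$. Proposition \ref{prop:restrictions} rewrites $\zeta(Z/\chi_\bla)\tau_+(Z)$ along the residue in terms of inner and outer corners of the intermediate partitions; across the iterated residues these corner products telescope to $\prod_{\bsq\in\blamu}\zeta(\chi_\bsq/\chi_\bmu)\tau_+(\chi_\bsq)$, combining with the previous factors into \eqref{eqn:coeff+}. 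Formula \eqref{eqn:coeff-} follows by the same argument using the contours in \eqref{eqn:intminus}, with $\tau_-(\chi_\bsq)^{-1}$ in place of $\tau_+(\chi_\bsq)$ and the roles of $\bla$ and $\bmu$ reversed.

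The principal obstacle is the residue cancellation step. Showing that only skew-shape residues survive is the combinatorial core of the shuffle-algebra picture and depends on a delicate interplay between the wheel conditions \eqref{eqn:wheel} defining $\CA^+$ and the zero loci of $\zeta$; once this cancellation is established, the remaining work is a telescoping corner-product identity and routine bookkeeping of the residue constants.
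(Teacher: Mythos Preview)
Your strategy is sound and would work, but the paper takes a different and cleaner route that sidesteps exactly the obstacle you flag at the end. Rather than attacking the full integral \eqref{eqn:p} for arbitrary $R^+$ and carrying out a multi-variable iterated residue computation, the paper first proves \emph{multiplicativity}: if \eqref{eqn:coeff+} holds for $R_1^+$ and $R_2^+$, then it holds for $R_1^+ * R_2^+$. This is established directly from the shuffle product \eqref{eqn:mult} evaluated at $\blamu$, using the vanishing $\zeta(x/y)|_{xq^2=y}=0$ (same color) and $\zeta(x/y)|_{x=y}=0$ (adjacent colors) to force the only surviving splittings $\blamu = A\sqcup B$ to be of the form $A=\blanu$, $B=\bnumu$ for some intermediate $\bnu$. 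Once multiplicativity is in hand, the isomorphism \eqref{eqn:iso} reduces the statement to the single-variable generators $R^\pm = z_{i1}^d$, for which the integral \eqref{eqn:z formula} is a one-variable residue that can be evaluated directly using Proposition \ref{prop:restrictions}, exactly as you describe in your base step.

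The trade-off is this: your direct approach keeps everything inside a single fixed-point computation but must confront the wheel conditions and the global combinatorics of iterated residues head-on; the paper's inductive approach pushes the combinatorics into the algebraic identity for the shuffle product (where it is a two-step rather than $|\bk|$-step argument) and leaves only a trivial analytic base case. Your approach would also need more care in the sentence about ``$\bk!$ orderings contributing equally'': the normal-ordered integral \eqref{eqn:intplus} is not literally an iterated contour integral in a prescribed order, so one has to be precise about which residue chains are being enumerated before invoking color-symmetry. The paper's argument avoids this bookkeeping entirely.
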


\begin{proof} Let us show that if \eqref{eqn:coeff+} holds for $R^+_1$ and $R^+_2$, then it holds for $R^+_1 * R^+_2$. Indeed, by the definition of the shuffle product, we have:
\begin{equation}
\label{eqn:fish}
R^+_1 * R^+_2 (\blamu) = \sum_{\blamu = A \sqcup B} R^+_1(A) R^+_2(B) \prod^{\sq \in A}_{\bsq \in B} \zeta \left( \frac {\chi_\sq}{\chi_\bsq} \right)
\end{equation}
A priori, the sum is over all ways to partition the set of boxes $\blamu$ into two disjoint parts $A$ and $B$. But recall the fact that $\zeta(x/y)|_{xq^2 \mapsto y} = 0$ when $\col y = \col x$ and $\zeta(x/y)|_{x \mapsto y} = 0$ when $\col y = \col x + 1$. This implies that if there were a box $\sq \in A$ to the left or below a box $\bsq \in B$, then the corresponding summand of \eqref{eqn:fish} would be 0. Therefore, the only non-zero summands of \eqref{eqn:fish} occur when $A = \blanu$ and $B = \bnumu$ for some intermediate partition $\bmu \leq \bnu \leq \bla$. Hence $\langle \bla | R^+_1 * R^+_2 | \bmu \rangle$ equals:
$$
\sum_{\bmu \leq \bnu \leq \bla} R^+_1(\blanu) R^+_2(\bnumu) \prod^{\sq \in \blanu}_{\bsq \in \bnumu} \zeta \left( \frac {\chi_\sq}{\chi_\bsq} \right) \prod_{\bsq \in \blamu} \left[\left(q^{-1} - q \right) \zeta \left( \frac {\chi_\bsq}{\chi_\bmu} \right) \tau_+(\chi_\bsq)  \right]
$$
The right-hand side is precisely obtained by iterating \eqref{eqn:coeff+} for $R^+_1$ and $R^+_2$. Similarly, one proves that if \eqref{eqn:coeff-} holds for $R^-_1$ and $R^-_2$, then it holds for $R_1^- * R_2^-$. \\

\noindent With the previous paragraphs in mind, \eqref{eqn:iso} reduces the proof of formulas \eqref{eqn:coeff+}--\eqref{eqn:coeff-} to the case of the shuffle elements $R^\pm = z_{i1}^d$. Let us first prove the $\pm = +$ case. Consider a fixed point $\bmu$, and pick an arbitrary element $f\in \Lambda$ such that:
\begin{equation}
\label{eqn:deltafunction}
f(\bnu) = \delta_\bnu^\bmu 
\end{equation}
for all fixed points $\bnu$ of a given size. Such an $f$ exists because \eqref{eqn:deltafunction} imposes only finitely many linear relations on the coefficients of the symmetric polynomial $f$ in infinitely many variables. Relation \eqref{eqn:z formula} implies:
$$
z_{i1}^d | \bmu \rangle = \sum_{\bla} |\bla \rangle \cdot \int z_{i1}^d \cdot \overline {f(X - z_{i1})\zeta\left( \frac {z_{i1}}X \right)} \Big|_{\bla}  \tau_+(z_{i1})  Dz_{i1} 
$$
where the integral is taken over the difference between a very small circle and a very large circle. Using \eqref{eqn:Gamma+}, the formula above becomes:
$$
\langle \bla | z_{i1}^d |\bmu \rangle = \int z_{i1}^d \cdot f(\bla - z_{i1}) \frac {\prod^{\col \sq = i+1}_{\sq \text{ inner corner of }\bla} \left(\frac {\sqrt{\chi_\sq}}q - \frac {z_{i1}q}{\sqrt{\chi_\sq}} \right)}{\prod^{\col \sq = i}_{\sq \text{ outer corner of }\bla} \left(\sqrt{\chi_\sq} - \frac {z_{i1}}{\sqrt{\chi_\sq}}\right)} Dz_{i1}
$$
We may think of the contour as surrounding the poles of the fraction, which are all of the form $z_{i1} = \chi_\bsq$ for an outer corner $\bsq \in \bla$ of color $i$. We obtain:
$$
\langle \bla | z_{i1}^d |\bmu \rangle = \sum^{\bsq \ =\text{ outer corner}}_{\text{of }\bla\text{ of color }i} \chi_\bsq^d \cdot f(\bla - \bsq) \frac {\prod^{\col \sq = i+1}_{\sq \text{ inner corner of }\bla} \left(\frac {\sqrt{\chi_\sq}}q - \frac {\chi_\bsq q}{\sqrt{\chi_\sq}} \right)}{\sqrt{\chi_\bsq} \prod^{\col \sq = i, \ \sq \neq \bsq}_{\sq \text{ outer corner of }\bla} \left(\sqrt{\chi_\sq} - \frac {\chi_\bsq}{\sqrt{\chi_\sq}}\right)} 
$$
Since $\bla - \bsq = \bnu$ must be a partition, condition \eqref{eqn:deltafunction} requires that this partition coincide with $\bmu$. We may then rewrite the above expression by changing the product over corners of $\bla$ to a product over corners of $\bmu = \bla \backslash \bsq$:
\begin{align*}
\langle \bla | z_{i1}^d |\bmu \rangle &= \chi_\bsq^d \cdot \left(q^{-1} - q\right) \frac {\prod^{\col \sq = i+1}_{\sq \text{ inner corner of }\bmu} \left(\frac {\sqrt{\chi_\sq}}q - \frac {\chi_\bsq q}{\sqrt{\chi_\sq}}\right)}{\prod^{\col \sq = i}_{\sq \text{ outer corner of }\bmu} \left(\sqrt{\chi_\sq} - \frac {\chi_\bsq}{\sqrt{\chi_\sq}}\right)} = \\ &= \chi_\bsq^d \cdot \left(q^{-1} - q \right) \ \zeta \left( \frac {\chi_\bsq}{\chi_\bmu} \right) \tau_+(\chi_\bsq)
\end{align*}
where $\bsq$ is the unique box of $\blamu$. This is precisely \eqref{eqn:coeff+} for $R^+ = z_{i1}^d$. \\

\noindent Let us now prove \eqref{eqn:coeff-} when $R^- = z_{i1}^d$. Consider a fixed point $\bla$, and pick an arbitrary element $f\in \Lambda$ such that:
\begin{equation}
\label{eqn:deltafunction 2}
f(\bnu) = \delta_\bnu^\bla 
\end{equation}
for all fixed points $\bnu$ of a given size. Relation \eqref{eqn:z formula} implies:
$$
z_{i1}^d | \bla \rangle = \frac 1q \sum_{\bmu} |\bmu \rangle \cdot \int z_{i1}^d \cdot \overline {f(X + z_{i1})\zeta\left( \frac X{z_{i1}} \right)^{-1}} \Big|_{\bmu}  \tau_-(z_{i1})^{-1}  Dz_{i1} 
$$
where the integral is taken over the difference between a very small circle and a very large circle. Using \eqref{eqn:Gamma-}, the formula above becomes:
$$
\langle \bmu | z_{i1}^d |\bla \rangle = \frac 1q \int z_{i1}^d \cdot f(\bmu + z_{i1}) \frac {\prod^{\col \sq = i-1}_{\sq \text{ outer corner of }\bmu} \left(\sqrt{\chi_\sq} q - \frac {z_{i1}}{\sqrt{\chi_\sq} q} \right)}{\prod^{\col \sq = i}_{\sq \text{ inner corner of }\bmu} \left(\sqrt{\chi_\sq} - \frac {z_{i1}}{\sqrt{\chi_\sq}}\right)} Dz_{i1}
$$
We may think of the contour as surrounding the poles of the fraction, which are all of the form $z_{i1} = \chi_\bsq$ for an inner corner $\bsq \in \bmu$ of color $i$. We obtain:
$$
\langle \bmu | z_{i1}^d |\bla \rangle = \frac 1q \sum^{\bsq \ =\text{ inner corner}}_{\text{of }\bmu \text{ of color }i} \chi_\bsq^d \cdot f(\bmu + \bsq) \frac {\prod^{\col \sq = i-1}_{\sq \text{ outer corner of }\bmu} \left(\sqrt{\chi_\sq} q - \frac {\chi_\bsq}{\sqrt{\chi_\sq}q} \right)}{\sqrt{\chi_\bsq} \prod^{\col \sq = i, \ \sq \neq \bsq}_{\sq \text{ inner corner of }\bmu} \left(\sqrt{\chi_\sq} - \frac {\chi_\bsq}{\sqrt{\chi_\sq}}\right)} 
$$
Since $\bmu + \bsq = \bnu$ must be a partition, condition \eqref{eqn:deltafunction 2} requires that this partition coincide with $\bla$. We may then rewrite the above expression by changing the product over corners of $\bmu$ to a product over corners of $\bla = \bmu \sqcup \bsq$:
\begin{align*}
\langle \bmu | z_{i1}^d |\bla \rangle &= \frac {\chi_\bsq^d}q \cdot \left(q - q^{-1}\right) \frac {\prod^{\col \sq = i-1}_{\sq \text{ outer corner of }\bla} \left(\sqrt{\chi_\sq} q - \frac {\chi_\bsq}{\sqrt{\chi_\sq} q}\right)}{\prod^{\col \sq = i}_{\sq \text{ inner corner of }\bla} \left(\sqrt{\chi_\sq} - \frac {\chi_\bsq}{\sqrt{\chi_\sq}}\right)} = \\ &= \chi_\bsq^d \cdot \left(1 -  q^{-2} \right) \left[ \zeta \left( \frac {\chi_\bla}{\chi_\bsq} \right) \tau_-(\chi_\bsq) \right]^{-1}
\end{align*}

\end{proof}

\section{Geometric Correspondences}
\label{sec:corr} 

\subsection{}\label{sub:basic}

In the present Section, we will prove Theorem \ref{thm:geom}. Specifically, we construct geometric correspondences which act on $K$ in the same way as the shuffle elements $S^\pm_m$, $T^\pm_m$, $G_{\pm \bk}$ of \eqref{eqn:s}, \eqref{eqn:t}, \eqref{eqn:g plus}, \eqref{eqn:g minus} act via Theorem \ref{thm:act}. Since the root generators \eqref{eqn:root e}--\eqref{eqn:root f} of $\uu$ are particular cases of the shuffle elements $S^+_m$ and $T^-_m$, this will also give a geometric interpretation of:
$$
\uu \subset \UU \cong \CA
$$
acting on $K$, thus completing the proof of Theorem \ref{thm:main}. Historically, the conjecture of Kuznetsov that motivated Theorem \ref{thm:main} (see \cite{FFNR} for some background) suggests that there should be an action of $\dot{\fgl}_n$ on the cohomology of affine Laumon spaces, given by the correspondences:
\begin{equation}
\label{eqn:kuznetsov}
\fC_{[i;j)} = \Big\{ ( \CF^+_\bullet \subset^{[i;j)}_\circ \CF^-_\bullet )\Big\}
\end{equation}
Above, $\circ = (0,0)$ denotes the origin of $\BP^1 \times \BP^1 \backslash \infty$, and the notation $\CF_\bullet^+ \subset^{[i;j)}_\circ \CF_\bullet^-$ means that the individual component sheaves of $\CF^+_\bullet$ and $\CF^-_\bullet$ are contained inside each other, with the quotient giving a type $[i;j)$ indecomposable representation of the cyclic quiver, supported at the origin $\circ$. The main problem with the correspondences $\fC_{[i;j)}$ is that they are not local complete intersections, and thus their structure sheaves do not give the ``right" operators in $K-$theory. Even worse, it is not clear how to define an appropriate virtual structure sheaf on $\fC_{[i;j)}$. \\

\subsection{}\label{sub:definition} 

Instead, in Definition \ref{def:fine} we will introduce an analogue of the fine correspondence studied in \cite{mod}. Afterwards, in Definition \ref{def:eccentric} we will introduce an analogue of the eccentric correspondence studied in \cite{thesis}. These two types of constructions will be the correct replacements for \eqref{eqn:kuznetsov} in $K-$theory: \\

\begin{definition}\label{def:fine}
	
The {\bf fine correspondence} is the locus of collections of $j-i+1$ parabolic sheaves, sitting inside each other: 
\begin{equation}
\label{eqn:z}
\fZ_{[i;j)} = \Big\{ ( \CF^{j}_\bullet \subset^{j-1}_\circ \CF^{j-1}_\bullet \subset^{j-2}_\circ ... \subset^{i+1}_\circ \CF^{i+1}_\bullet \subset^{i}_\circ \CF^{i}_\bullet )\Big\}
\end{equation}
where the notation $\CF_\bullet' \subset^k_\circ \CF_\bullet$ is shorthand for $\CF'_\bullet \subset^{[k;k+1)}_\circ \CF_\bullet$. In other words, this notation means that $\CF'_l \subset \CF_l$ for all integers $l$, and the quotient is non-trivial only if $l\equiv k$ mod $n$, in which case it is a length 1 skyscraper sheaf supported at the origin $\circ$. \\

\end{definition}

\noindent Because the successive quotients in a flag \eqref{eqn:z} are length one, the parabolic sheaves $\CF^j_\bullet,...,\CF^i_\bullet$ that make up such a flag must have degrees $\bd^j,...,\bd^i$, respectively, where:
$$
\bd^{k+1} = \bd^k + \bs^k \qquad \qquad \forall k \in \{i,...,j-1\}
$$
and $\bs^k \in \nn$ is the vector with entry 1 on position $\bark$, and 0 everywhere else. We will often write $\bd^j = \bd^+$ and $\bd^i = \bd^-$, so we must have $\bd^+ = \bd^- + [i;j)$. Although $\fZ_{[i;j)}$ is also quite far from being smooth, one expects that the forgetful map:
$$
\fZ_{[i;j)} \rightarrow \fC_{[i;j)} \qquad \qquad (\CF^{j}_\bullet \subset^{j-1}_\circ ... \subset^{i}_\circ \CF^{i}_\bullet) \rightarrow (\CF^j_\bullet \subset^{[i;j)}_\circ \CF^i_\bullet)
$$
(which is an isomorphism if $j-i \leq n$, but a non-trivial projective morphism otherwise) behaves like a resolution of singularities, and thus the correspondences $\fZ_{[i;j)}$ and $\fC_{[i;j)}$ give rise to the same operator in cohomology. However, this argument does not work in $K$--theory, and to fix this issue, one needs to define an appropriate virtual fundamental class on the correspondence $\fZ_{[i;j)}$. We will do so now by appealing to the quiver description of this correspondence. \\

\subsection{}\label{sub:quivercorr}

Recall from Subsection \ref{sub:quiver2} that Laumon spaces $\CM_\bd$ parametrize quadruples:
\begin{equation}
\label{eqn:duran}
(\bX,\bY,\bA,\bB) = (...,X_k,Y_k,A_k,B_k,...)_{1 \leq k \leq n}
\end{equation}
of linear maps between $n$--tuples of vector spaces $\bV = (V_1,...,V_n)$ and $\bigoplus_{i=1}^n \BC w_i$. It is easy to see that $\fZ_i := \fZ_{[i;i+1)}$ parameterizes quadruples of maps \eqref{eqn:duran} which preserve a fixed quotient $\bV^+ \twoheadrightarrow_i \bV^-$, by which we mean a fixed collection of quotients of vector spaces $V^+_k \twoheadrightarrow V^-_k$, of codimension $\delta_k^i$. Then consider the diagram:

\begin{picture}(200,180)(60,-70)\label{pic:simple}

\put(98,31){\dots}
\put(343,31){\dots}

\put(115,31){\vector(1,0){50}}
\put(135,34){$Y_{i-1}$}

\put(173,29){\vector(2,-1){53}}
\put(195,20){$Y_{i}$}

\put(234,58){\vector(2,-1){52}}
\put(244,39){$Y_{i+1}$}

\put(295,31){\vector(1,0){50}}
\put(310,20){$Y_{i+2}$}

\put(170,31){\circle*{10}}
\put(170,50){\circle{30}}
\put(155,47){$X_{i-1}$}
\put(145,20){$\color{red}{V_{i-1}}$}
\put(162,36){\vector(4,-1){5}}

\put(230,61){\circle*{10}}
\put(230,80){\circle{30}}
\put(215,77){$X^-_{i}$}
\put(212,50){$\color{red}{V^-_{i}}$}
\put(222,66){\vector(4,-1){5}}

\multiput(230,6)(0,8){7}{\line(0,1){5}}
\put(230,53){\vector(0,1){3}}
\put(230,47){\vector(0,1){3}}

\put(230,1){\circle*{10}}
\put(230,-19){\circle{30}}
\put(230,-23){$X^+_{i}$}
\put(232,8){$\color{red}{V^+_{i}}$}
\put(237,-4){\vector(-4,1){5}}

\put(290,31){\circle*{10}}
\put(290,12){\circle{30}}
\put(284,12){$X_{i+1}$}
\put(297,34){$\color{red}{V_{i+1}}$}
\put(297,27){\vector(-4,1){5}}

\put(135,-20){\line(1,0){10}}
\put(135,-10){\line(1,0){10}}
\put(135,-20){\line(0,1){10}}
\put(145,-20){\line(0,1){10}}
\put(135,-28){$\color{blue}{w_{i-1}}$}

\put(111,27){\vector(2,-3){25}}
\put(145,-10){\vector(2,3){25}}
\put(128,4){$B_{i-1}$}
\put(158,4){$A_{i-1}$}

\put(195,-20){\line(1,0){10}}
\put(195,-10){\line(1,0){10}}
\put(195,-20){\line(0,1){10}}
\put(205,-20){\line(0,1){10}}
\put(195,-28){$\color{blue}{w_{i}}$}

\put(171,27){\vector(2,-3){25}}
\put(205,-10){\vector(2,1){20}}
\put(188,4){$B_{i}$}
\put(205,-2){$A_{i}$}

\put(255,70){\line(1,0){10}}
\put(255,80){\line(1,0){10}}
\put(255,70){\line(0,1){10}}
\put(265,70){\line(0,1){10}}
\put(255,83){$\color{blue}{w_{i+1}}$}

\put(231,58){\vector(2,1){24}}
\put(265,70){\vector(2,-3){23}}
\put(244,57){$B_{i+1}$}
\put(271,63){$A_{i+1}$}

\put(315,70){\line(1,0){10}}
\put(315,80){\line(1,0){10}}
\put(315,70){\line(0,1){10}}
\put(325,70){\line(0,1){10}}
\put(315,83){$\color{blue}{w_{i+2}}$}

\put(291,33){\vector(2,3){25}}
\put(325,70){\vector(2,-3){23}}
\put(305,48){$B_{i+2}$}
\put(337,55){$A_{i+2}$}

\put(210,-55){\text{Figure \ref{pic:simple}}}

\end{picture} 

\noindent We claim that a collection of maps as in the picture above amounts to a point of $\fZ_i$. In more detail, we use the codimension 1 surjection (the vertical dotted arrow, which we henceforth call $\iota$) to produce the maps $Y_i,A_i$ to $V_i^-$ and the maps $Y_{i+1}, B_{i+1}$ from $V_i^+$. The endomorphisms $X_i^+$ and $X_i^-$ are required to satisfy $X_i^- \circ \iota = \iota \circ X_i^+$, and induce the zero map on the one-dimensional kernel of $\iota$. \\  

\subsection{}\label{sub:fine}

Iterating the above argument, points of $\fZ_{[i;j)}$ are quadruples \eqref{eqn:duran} which preserve a fixed flag of subspaces: 
\begin{equation}
\label{eqn:angela}
\bV^+ = \bV^j \twoheadrightarrow_{j-1} \bV^{j-1} \twoheadrightarrow_{j-2} ... \twoheadrightarrow_{i+1} \bV^{i+1} \twoheadrightarrow_i \bV^i = \bV^-
\end{equation}
If we fix a surjection of $n-$tuples of vector spaces $\bV^+ \twoheadrightarrow \bV^-$ of codimension $[i;j)$, then the datum \eqref{eqn:angela} is equivalent to a full flag of subspaces:
\begin{equation}
\label{eqn:funk}
0 = U^{[j;j)} \subset U^{[j-1;j)} \subset U^{[j-2;j)} \subset ... \subset U^{[i;j)} = \text{Ker}(\bV^+ \twoheadrightarrow \bV^-)
\end{equation}
Here, $U^{[a;j)}$ is itself an $n-$tuple of vector spaces of dimension $[a;j)$, and the cokernel $U^{[a;j)} / U^{[a+1;j)}$ is an $n-$tuple of vector spaces of dimension $\bs^a$. This means that the cokernel is non-zero only on the $a-$th place, where it is one-dimensional:
\begin{equation}
\label{eqn:cok1}
L_a := U^{[a;j)}_a/U^{[a+1;j)}_a
\end{equation}
Then the $n-$tuple of vector spaces $\bV^a$ which appears in \eqref{eqn:angela} is precisely $\bV^+/U^{[a;j)}$. Recall the vector space $M_{\bd^+}$ of quadruples of linear maps $(\bX,\bY,\bA,\bB)$ involving the $n-$tuple of vector spaces $\bV^+$ and $\oplus_{k=1}^n \BC w_k$. Consider the affine space:
$$
Z_{[i;j)} \subset M_{\bd^+}
$$
of quadruples that preserve the quotient $\bV^+ \twoheadrightarrow \bV^-$, and such that: \\

\begin{itemize}[leftmargin=*]
	
\item the $\bX$ maps preserve the flag \eqref{eqn:funk} and act nilpotently on it:
$$
\bX  \left( U^{[a;j)} \right) \ \subset \ U^{[a+1;j)} \qquad \forall a\in \{i,...,j-2\}
$$
	
\item the $\bY$ maps preserve the flag \eqref{eqn:funk}. Since $V_{a-1} \stackrel{Y_a}\longrightarrow V_a$ for all $a$, this forces:
$$
\bY \left( U^{[a;j)} \right) \ \subset \ U^{[a+1;j)} \qquad \forall a\in \{i,...,j-2\}
$$
	
\end{itemize}

\noindent As a consequence of the bullets above, we note that the commutator $[\bX,\bY]$ maps the vector space $U^{[a;j)}$ to $U^{[a+2;j)}$ for all $a$. We then define the linear map $\eta$:
$$
\xymatrix{
Z_{[i;j)} \ar@{^{(}->}[d] \ar[r]^-\eta & \fa_{[i;j)} \ar@{^{(}->}[d] \\
M_{\bd^+} \ar[r]^-\nu & \oplus_{k=1}^{n} \text{Hom}(V_{k-1}^+, V_k^+)}
$$
in order to make the diagram commute, where $\fa_{[i;j)}$ consists of those $n-$tuples of homomorphisms $V_{k-1}^+ \rightarrow V_k^+$ which not only preserve the flag \eqref{eqn:funk}, but map $U^{[a;j)}$ to $U^{[a+2;j)}$ for all $a$. Finally, consider the subgroup $P_{[i;j)} \subset GL_{\bd^+}$ of automorphisms of $\bV^+$ which preserves the flag \eqref{eqn:funk}. The above discussion establishes:
\begin{equation}
\label{eqn:fine}
\fZ_{[i;j)} = \eta^{-1}(0)^s/P_{[i;j)}
\end{equation}
where the superscript ``s" denotes, as always, the open subset of stable quadruples. A direct analogue of Remark \ref{rem:stable} implies that the quotient \eqref{eqn:fine} is geometric. \\

\subsection{}\label{sub:eccentric}

We will now give a related, but different construction, which admits a description as in Subsection \ref{sub:fine}, but not as in Subsection \ref{sub:definition}. Consider a quotient of $n-$tuples of vector spaces $\bV^+ \twoheadrightarrow \bV^-$ of codimension $[i;j)$, and fix a full flag of subspaces:
\begin{equation}
\label{eqn:zeit}
0 = U^{[i;i)} \subset U^{[i;i+1)} \subset U^{[i;i+2)} \subset ... \subset U^{[i;j)} = \text{Ker}(\bV^+ \twoheadrightarrow \bV^-)
\end{equation}
Note the difference between the flag above and \eqref{eqn:funk}. In \eqref{eqn:zeit}, $U^{[i;a)}$ is itself an $n-$tuple of vector spaces of dimension vector $[i;a)$, and the cokernel $U^{[i;a+1)} / U^{[i;a)}$ is an $n-$tuple of vector spaces of dimension vector $\bs^a$. This means that the cokernel is non-zero only on the $a-$th place, where it is one-dimensional:
\begin{equation}
\label{eqn:cok2}
L_a := U^{[i;a+1)}_a/U^{[i;a)}_a
\end{equation}
Recall the vector space $M_{\bd^+}$ of quadruples of linear maps $(\bX,\bY,\bA,\bB)$ involving the $n-$tuple of vector spaces $\bV^+$ and $\oplus_{k=1}^n \BC w_k$. Consider the affine space:
$$
\oZ_{[i;j)} \subset M_{\bd^+}
$$
of quadruples that preserve the quotient $\bV^+ \twoheadrightarrow \bV^-$, such that: \\

\begin{itemize}[leftmargin=*]

\item the $\bX$ maps preserve the flag \eqref{eqn:zeit} and act nilpotently on it:
$$
\bX \left( U^{[i;a)} \right) \subset U^{[i;a-1)} \qquad \forall a\in \{i+1,...,j-1\}
$$

\item the $\bY$ maps ``almost" preserve the flag \eqref{eqn:zeit}, in the sense that:
\begin{equation}
\label{eqn:almost}
\bY \left( U^{[i;a)} \right) \subset U^{[i;a+1)} \qquad \forall a\in \{i+1,...,j-1\}
\end{equation}

\end{itemize}

\noindent As a consequence of the above bullets, note that the commutator $[\bX,\bY]$ takes $U^{[i;a)}$ to $U^{[i;a)}$ for all $a$, and so preserves the flag \eqref{eqn:zeit}. We define the linear map $\oeta$: \\
$$
\xymatrix{	{\oZ}_{[i;j)} \ar@{^{(}->}[d] \ar[r]^-{\oeta} & {\overline{\fa}}_{[i;j)} \ar@{^{(}->}[d] \\
M_{\bd^+} \ar[r]^-\nu & \oplus_{k=1}^{n} \text{Hom}(V_{k-1}^+, V_k^+)}
$$
in order to make the diagram commute, where $\overline{\fa}_{[i;j)}$ consists of those $n-$tuples of homomorphisms $V_{k-1}^+ \rightarrow V_k^+$ which preserve the flag \eqref{eqn:zeit}. Consider the subgroup $\oP_{[i;j)} \subset GL_{\bd^+}$ of automorphisms of the $n-$tuple of vector spaces $\bV^+$, which preserve the flag of subspaces \eqref{eqn:zeit}. Then we define:
\begin{equation}
\label{eqn:eccentric}
\ofZ_{[i;j)} = \oeta^{-1}(0)^s/\oP_{[i;j)}
\end{equation}
where the superscript ``s" denotes the open subset of stable quadruples. A straightforward analogue of Remark \ref{rem:stable} implies that the quotient \eqref{eqn:fine} is geometric. \\

\begin{definition}
\label{def:eccentric} 

Call the variety $\ofZ_{[i;j)}$ of \eqref{eqn:eccentric} an {\bf eccentric correspondence}. \\

\end{definition}

\subsection{}\label{sub:koszul}

In Subsections \ref{sub:fine} and \ref{sub:eccentric}, we started from affine spaces $Z_{[i;j)}$ and $\oZ_{[i;j)}$, imposed on them the equations $\eta = 0$ and $\oeta = 0$, respectively, and then took the quotient under a parabolic group action. The latter operation does not affect the smoothness of the spaces in question, because the action is free on the open locus of stable points, and stability in our case is precisely the notion derived from GIT. But a priori, imposing a number of equations on an affine space may yield a ``bad" space. \\

\begin{definition}
\label{def:koszul}

Consider a section $s$ of a vector bundle $E$ on a smooth variety $X$. The structure sheaf of the scheme-theoretic zero locus $\iota : Z = \{s = 0\} \hookrightarrow X$ is isomorphic to the right-most cohomology group $\CO_X/\text{Im }s^\vee$ of the Koszul complex:
\begin{equation}
\label{eqn:koszul}
\wedge^\bullet(E,s) = \Big[ ... \stackrel{s^\vee}\longrightarrow \wedge^2 E^\vee \stackrel{s^\vee}\longrightarrow E^\vee \stackrel{s^\vee}\longrightarrow \CO_X  \Big]
\end{equation}
(if the section $s$ is regular, then all other cohomology groups vanish). In general, we define the virtual structure sheaf as the $K$--theory class:
$$
[Z] = \sum_{k=0}^{\text{rank }E} (-1)^k [\wedge^k E^\vee] \in \text{Im} \Big( K(Z) \stackrel{\iota_*}\longrightarrow K(X) \Big)
$$
In the following, we will abuse notation and refer to $[Z]$ as an element in $K(Z)$, namely the alternating sum of the cohomology groups of the complex \eqref{eqn:koszul}. \\

\end{definition}

\noindent Applying Definition \ref{def:koszul} gives us virtual structure sheaves $[\fZ_{[i;j)}]$ and $[\ofZ_{[i;j)}]$ on the fine and eccentric correspondences of \eqref{eqn:fine} and \eqref{eqn:eccentric}, respectively. \\

\begin{definition}
\label{def:line}

The {\bf tautological line bundles} $\CL_i$, ..., $\CL_{j-1}$ on $\fZ_{[i;j)}$, $\ofZ_{[i;j)}$ have fibers given by the one-dimensional quotients \eqref{eqn:cok1}, \eqref{eqn:cok2}, respectively. \\

\end{definition}

\noindent We will adjust the virtual structure sheaves $[\fZ_{[i;j)}]$ and $[\ofZ_{[i;j)}]$ by multiplying them with certain line bundles from Definition \ref{def:line} and equivariant constants. This is simply a cosmetic change, meant to match our shuffle algebra conventions:
\begin{align}
&[\fZ_{[i;j)}^+] = [\fZ_{[i;j)}] \cdot (-1)^{j-i-1} u_{i+1}...u_j \frac {\CL_i}{\CL_{j-1}}  \frac {q^{\left \lceil \frac {j-i}n \right \rceil - 2}}{q^{d^+_i-d^+_j}} \label{eqn:adjust1} \\
&[\fZ_{[i;j)}^-] = [\fZ_{[i;j)}] \cdot (-1)^{j-i-1}  \frac {u_i...u_{j-1}}{\CL_i...\CL_{j-1}} \frac {\CL_i}{\CL_{j-1}} \frac {q^{\left \lceil \frac {j-i}n \right \rceil - 2}}{q^{d^-_{i-1} - d^-_{j-1}}} \label{eqn:adjust2} \\
&[\ofZ_{[i;j)}^+] = [\ofZ_{[i;j)}] \cdot u_{i+1}...u_j \frac {q^{-j + i - \left \lfloor \frac {j-i}n \right \rfloor}}{q^{d^+_i-d^+_j}} \label{eqn:adjust3} \\
&[\ofZ_{[i;j)}^-] = [\ofZ_{[i;j)}] \cdot \frac {u_i...u_{j-1}}{\CL_i...\CL_{j-1}} \frac {q^{- j + i -\left \lfloor \frac {j-i}n \right \rfloor}}{q^{d^-_{i-1}-d^-_{j-1}}} \label{eqn:adjust4}
\end{align}
where $d_k^\pm$ are the dimensions of the vector spaces $V_k^\pm$ in the definition of $\fZ_{[i;j)}$, $\ofZ_{[i;j)}$. \\

\subsection{}\label{sub:operators}

Consider the following projection maps:
\begin{equation}
\label{eqn:diagram}
\xymatrix{ & \fZ_{[i;j)} \text{ or } \ofZ_{[i;j)} \ar[ld]_{p^+ \text{ or } \op^+} \ar[rd]^{p^- \text{ or } \op^-}  & \\
\CM_{\bd^+} & & \CM_{\bd^-}}
\end{equation}
which remember only the quadruple of linear maps on the $n-$tuples of vector spaces $\bV^+$ or $\bV^-$. For any Laurent polynomial $m(z_i,...,z_{j-1})$, we define the operators:
\begin{align}
&s_m^\pm : K \rightarrow K, \quad \alpha \mapsto p^{\pm}_*\Big(m \left( \CL_i ,..., \CL_{j-1} \right) \cdot [\fZ_{[i;j)}^\pm] \cdot p^{\mp *}(\alpha)\Big)  \label{eqn:opop1} \\
&t_m^\pm : K \rightarrow K, \quad \alpha \mapsto  \op^{\pm}_*\Big(m \left( \CL_i,..., \CL_{j-1} \right) \cdot [\ofZ_{[i;j)}^\pm] \cdot \op^{\mp *}(\alpha)\Big) \label{eqn:opop2}
\end{align}
The way the virtual structure sheaves in \eqref{eqn:opop1}--\eqref{eqn:opop2} give rise to endomorphisms of $K$ is well-known. In more detail, recall from \eqref{eqn:fine} that $\fZ_{[i;j)}$ is cut out from the affine space $Z_{[i;j)}$ (modulo the group $P_{[i;j)}$) by the section $\eta$ of the vector bundle with fibers $\fa_{[i;j)}$. Therefore, Definition \ref{def:koszul} gives rise to a virtual fundamental class inside the $P_{[i;j)}$ equivariant $K$--theory of $Z_{[i;j)}$, which then maps to the equivariant $K$--theory group of $\CM_{\bd^+} \times \CM_{\bd^-}$, via the projection maps \eqref{eqn:diagram}. This $K$--theory class produces the operator \eqref{eqn:opop1}, and \eqref{eqn:opop2} is defined analogously. \\

\begin{remark}
\label{rem:simple}

The correspondence $\fZ_i := \fZ_{[i;i+1)} = \ofZ_{[i;i+1)}$ is smooth of middle dimension in $\CM_{\bd^+} \times \CM_{\bd^-}$. We call $\fZ_i$ a {\bf simple correspondence}, and note that it was used in \cite{FFNR} and \cite{T} to construct operators in the cohomology and $K-$theory of affine Laumon spaces, respectively. In more detail, \loccit used the operators \eqref{eqn:opop1} and \eqref{eqn:opop2} for $j=i+1$ to construct an action $\UU \curvearrowright K$, precisely the action that we invoked in the proof of Theorem \ref{thm:act}. \\

\end{remark}

\subsection{}\label{sub:dimcount}

The equivariant localization formula \eqref{eqn:eqloc0} may be adapted to the case when the smooth variety $X$ is replaced by the zero locus $Z \hookrightarrow X$ of a regular section of a vector bundle $E$ (as in Definition \ref{def:koszul}), and it reads:
\begin{equation}
\label{eqn:eqloc vir}
c = \sum_{z \in Z^T} c|_z \cdot [z] \cdot \frac {[\wedge^\bullet(E_z^\vee)]}{[\wedge^\bullet(T_z^\vee X)]} = \frac {c|_z \cdot [z]}{[\wedge^\bullet(T_z^\vee X - E_z^\vee)]}
\end{equation}
The difference $T Z := T X - E$ is considered to be the virtual tangent space to the zero locus $Z \hookrightarrow X$. Therefore, formula \eqref{eqn:eqloc} still holds for $X$ replaced with $Z$, if the $K$--theory classes of the skyscraper sheaves $[z]$ are replaced by:
\begin{equation}
\label{eqn:renormalization vir}
| z \rangle = \frac {[z]}{[\wedge^\bullet(T_z^\vee Z)]} = \frac {[z]}{[\wedge^\bullet(T_z^\vee X - E_z^\vee)]}
\end{equation}
Even when the section is non-regular, \eqref{eqn:eqloc vir} and \eqref{eqn:renormalization vir} continue to hold, at the price of replacing the scheme-theoretic zero locus $Z$ by the virtual structure sheaf of Definition \ref{def:koszul}. In the particular cases of the fine and eccentric correspondences, it follows from \eqref{eqn:fine} and \eqref{eqn:eccentric} that their virtual tangent spaces are given by:
\begin{align}
&[T \fZ_{[i;j)}] = [TZ_{[i;j)]}] - [\fa_{[i;j)}] - [\fp_{[i;j)}] \label{eqn:vir fine} \\
&[T \ofZ_{[i;j)}] = [T\oZ_{[i;j)]}] - [\overline{\fa}_{[i;j)}] - [\overline{\fp}_{[i;j)}] \label{eqn:vir eccentric}
\end{align} 
Since the Lie algebra of an algebraic group is the vector space of invariant vector fields, the reason why we subtract the Lie algebras $\fp_{[i;j)}$ and $\overline{\fp}_{[i;j)}$ from \eqref{eqn:vir fine}--\eqref{eqn:vir eccentric} is to account for killing those tangent vectors that come from the $P_{[i;j)}$ and $\oP_{[i;j)}$ actions in the quotients \eqref{eqn:fine} and \eqref{eqn:eccentric}, respectively. All the summands in \eqref{eqn:vir fine} and \eqref{eqn:vir eccentric} may be expressed in terms of the tautological vector bundles: 
$$
\{\CV_k^+\}_{1 \leq k \leq n} \quad \text{and} \quad \{\CV_k^-\}_{1 \leq k \leq n}
$$
pulled back from Laumon spaces via the maps $p^\pm$ or $\op^\pm$ of \eqref{eqn:diagram}, and in terms of the tautological line bundles $\CL_i,...,\CL_{j-1}$ from Definition \ref{def:line}. The precise formula is given in the following Proposition, which will be proved in the Appendix. \\

\begin{proposition}
\label{prop:tangent}
	
The virtual tangent space to the fine correspondence $\fZ_{[i;j)}$ is:
$$
\left[ T \fZ_{[i;j)} \right] - \left[ T\CM_{\bd^+} \right] = i - j + \left(1 - \frac 1{q^2}\right) \left( \sum_{i \leq a < j} \frac {\CV^+_a}{\CL_a} - \sum_{i \leq a < j} \frac {\CV^+_{a+1}}{\CL_a} - \right.
$$ 
\begin{equation}
\label{eqn:form1}
\left. - \sum_{i \leq a < b < j}^{a \equiv b} \frac {\CL_b}{\CL_a}  + \sum_{i \leq a < b < j}^{a \equiv b - 1} \frac {\CL_b}{\CL_a} \right) + \sum_{a=i+1}^{j-1} \frac {\CL_a}{\CL_{a-1} q^2} - \sum_{a=i}^{j-1} \frac {u^2_{a+1}}{\CL_aq^2}
\end{equation}
and the virtual tangent space to the eccentric correspondence $\ofZ_{[i;j)}$ is: 
$$
\left[ T \ofZ_{[i;j)} \right] - \left[ T\CM_{\bd^+} \right] = i - j + \left(1 - \frac 1{q^2}\right) \left( \sum_{i \leq a < j} \frac {\CV^+_a}{\CL_a} - \sum_{i \leq a < j} \frac {\CV^+_{a+1}}{\CL_a} - \right.
$$ 
\begin{equation}
\label{eqn:form3}
\left. - \sum_{i \leq a < b < j}^{a \equiv b} \frac {\CL_a}{\CL_b}  + \sum_{i \leq a < b < j}^{a \equiv b + 1} \frac {\CL_{a}}{\CL_b} \right) + \sum_{a=i+1}^{j-1} \frac {\CL_a}{\CL_{a-1}} - \sum_{a=i}^{j-1} \frac {u^2_{a+1}}{\CL_aq^2}
\end{equation}

\end{proposition}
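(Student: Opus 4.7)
The plan is to compute each of the three terms on the right of \eqref{eqn:vir fine} and \eqref{eqn:vir eccentric} directly as $P_{[i;j)} \times T$--characters (resp.\ $\oP_{[i;j)} \times T$--characters). The starting point is to choose a $T$--equivariant splitting $V^+_k \cong V^-_k \oplus \bigoplus_{a \equiv k, \, i \leq a < j} L_a$ of each component of $\bV^+$, where $L_a$ denotes the fiber of $\CL_a$ (with equivariant weight dictated by convention \eqref{eqn:change 1}). Relative to this splitting, every linear map $X_k, Y_k, A_k, B_k$ decomposes into blocks indexed by pairs of summands, and one can read off directly which blocks belong to the affine space $Z_{[i;j)}$ or $\oZ_{[i;j)}$, which to the codomain $\fa_{[i;j)}$ or $\overline{\fa}_{[i;j)}$ of $\eta$ or $\oeta$, and which to the parabolic Lie algebra $\fp_{[i;j)}$ or $\overline{\fp}_{[i;j)}$. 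The rest is bookkeeping.

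For the fine case, the $X$--allowed blocks are $V^-_k \to V^+_k$ (unconstrained lifts) and $L_{a'} \to L_a$ with $a > a'$, $a \equiv a' \equiv k$ (strict nilpotency on the flag), each carrying weight $q^{-2}$; the $Y$--allowed blocks are analogous with target-color $k$ and source-color $k-1$, each of weight $1$; the $A$--blocks fill all of $\Hom(W_k, V^+_k)$, while $B$ is constrained to kill $U^{[i;j)}_{k-1}$, leaving only $V^-_{k-1} \to W_k$ with weight $u_k^2 q^{-2}$. The codomain $\fa_{[i;j)}$ then picks out the $V^+_{k-1} \to V^+_k$ blocks sending $U^{[a;j)}$ into $U^{[a+2;j)}$ (weight $q^{-2}$), and $\fp_{[i;j)}$ picks out endomorphisms of $\bV^+$ non-strictly preserving the flag \eqref{eqn:funk} (weight $1$). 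Assembling these in \eqref{eqn:vir fine} and substituting $\CV^+_k = \CV^-_k + \sum_{a \equiv k, \, i \le a < j} \CL_a$ (with the appropriate $\oq$--twists), the combination of $X$--forbidden (weight $q^{-2}$) and $\fp$--forbidden (weight $1$) blocks produces the $(1 - q^{-2})$ prefactor on the first two sums of \eqref{eqn:form1}; the combination of $Y$--forbidden with the appropriate slice of $\fa_{[i;j)}$--forbidden produces the two $\CL_b/\CL_a$ sums with the respective congruence conditions $a \equiv b$ and $a \equiv b - 1$; the $B$--constraint supplies $-\sum u^2_{a+1}/(\CL_a q^2)$; the term $\sum \CL_a/(\CL_{a-1} q^2)$ emerges from the shift-by-2 cutoff in $\fa_{[i;j)}$; and the constant $i - j$ reflects the reorganization of diagonal $L_a \to L_a$ contributions into the compact form of \eqref{eqn:form1}.

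The eccentric case is parallel, with the following structural changes induced by the reversed indexing of the flag \eqref{eqn:zeit} and the weaker $Y$--condition \eqref{eqn:almost}: the $X$--allowed blocks now descend strictly in the opposite direction, while the $Y$--allowed blocks include a ``shift up by one'' contribution that is weaker than strict nilpotency. These modifications replace the sum over $a \equiv b - 1$ in \eqref{eqn:form1} by the sum over $a \equiv b + 1$ in \eqref{eqn:form3}, and remove the $q^{-2}$ factor from $\sum_{a = i+1}^{j-1} \CL_a/\CL_{a-1}$, because the corresponding contribution now arises from the parabolic $\overline{\fp}_{[i;j)}$ (weight $1$) rather than from a strict-nilpotency constraint on $Y$. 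The main obstacle will be the careful tracking of equivariant weights---in particular, the $\oq$--twists from \eqref{eqn:change 1} when translating between color indices in $\{1, \dots, n\}$ and flag indices in $\{i, \dots, j-1\}$, together with the signs and index ranges that must conspire to produce exactly the constant $i - j$ rather than some unintended polynomial in the equivariant parameters. These checks are routine but unforgiving, and most naturally carried out by tabulating each block separately.
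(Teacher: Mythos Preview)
Your approach is correct and is essentially the same as the paper's. The paper also computes $[TZ_{[i;j)}]$, $[\fa_{[i;j)}]$, $[\fp_{[i;j)}]$ separately and then assembles them via \eqref{eqn:vir fine}; the only organizational difference is that instead of fixing a splitting and enumerating blocks, the paper packages the repeated step into a small lemma: for any pair of flagged vector spaces $V_1^+,V_2^+$ with successive quotients $L_1^a,L_2^a$, the space of flag-preserving maps $V_1^+\to V_2^+$ has class $\frac{V_2^+}{V_1^+}-\sum_a\frac{V_2^+}{L_1^a}+\sum_{a\le b}\frac{L_2^a}{L_1^b}$ in the Grothendieck group. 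This lemma is proved by a one-line induction on the flag length and then applied (with appropriate shifts for the strict-nilpotency conditions) to each of $X,Y,B,\fa,\fp$ in turn, yielding the analogues of your block tabulations without ever choosing a splitting. The advantage is that the cancellation producing the $(1-q^{-2})$ prefactor and the constant $i-j$ becomes a mechanical subtraction of three displayed formulas rather than a block-by-block reconciliation; the disadvantage is that the geometric meaning of each surviving term is slightly less transparent than in your description.
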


\noindent Since the $K$--theory class of the tangent bundle to $\CM_{\bd^+}$ is computed by \eqref{eqn:tangentspace}, formulas \eqref{eqn:form1} and \eqref{eqn:form3} allow one to compute the tangent space to the fine and eccentric correspondences directly. However, for the purpose of localization computations later on, it makes more sense to leave these formulas in the form above. We leave the following Proposition as an exercise to the interested reader, but note that it readily follows by combining \eqref{eqn:form1}--\eqref{eqn:form3} with the difference between $[T\CM_{\bd^+}]$ and $[T\CM_{\bd^-}]$, which can be computed using \eqref{eqn:tangentspace}. \\

\begin{proposition}
\label{prop:tangent 2}

The following formulas are equivalent to \eqref{eqn:form1} and \eqref{eqn:form3}:
$$
\left[ T \fZ_{[i;j)} \right] - \left[ T\CM_{\bd^-} \right] = i - j  + \left(1 - \frac 1{q^2}\right) \left( - \sum_{i \leq a < j} \frac {\CL_a}{\CV^-_a} + \sum_{i \leq a < j} \frac {\CL_a}{\CV^-_{a-1}} - \right.
$$ 
\begin{equation}
\label{eqn:form2}
\left. - \sum_{i \leq a < b < j}^{a \equiv b} \frac {\CL_b}{\CL_a}  + \sum_{i \leq a < b < j}^{a \equiv b - 1} \frac {\CL_b}{\CL_a} \right) + \sum_{a=i+1}^{j-1} \frac {\CL_a}{\CL_{a-1} q^2} + \sum_{a=i}^{j-1} \frac {\CL_a}{u^2_{a}}
\end{equation}
$$
\left[ T \ofZ_{[i;j)} \right] - \left[ T\CM_{\bd^-} \right] = i - j + \left(1 - \frac 1{q^2}\right) \left( - \sum_{i \leq a < j} \frac {\CL_a}{\CV^-_a} + \sum_{i \leq a < j} \frac {\CL_a}{\CV^-_{a-1}} - \right.
$$ 
\begin{equation}
\label{eqn:form4}
\left. - \sum_{i \leq a < b < j}^{a \equiv b} \frac {\CL_a}{\CL_b}  + \sum_{i \leq a < b < j}^{a \equiv b + 1} \frac {\CL_{a}}{\CL_b} \right) + \sum_{a=i+1}^{j-1} \frac {\CL_a}{\CL_{a-1}} + \sum_{a=i}^{j-1} \frac {\CL_a}{u^2_{a}}
\end{equation}
respectively. \\

\end{proposition}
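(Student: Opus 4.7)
Since \eqref{eqn:form1} and \eqref{eqn:form2} compute the same class $[T\fZ_{[i;j)}]$ shifted by $[T\CM_{\bd^+}]$ and $[T\CM_{\bd^-}]$ respectively, the two formulas are equivalent if and only if their difference equals $[T\CM_{\bd^+}] - [T\CM_{\bd^-}]$. The same reduction applies to \eqref{eqn:form3} versus \eqref{eqn:form4}. My plan is to verify this numerical identity in $K$-theory by direct computation.

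The crucial input is the $K$-theoretic relation
$$\CV^+_k - \CV^-_k \ = \ \Delta_k \ := \sum_{a \in [i,j),\ \bari = k} \CL_a,$$
valid on $\fZ_{[i;j)}$ by the flag \eqref{eqn:funk} and on $\ofZ_{[i;j)}$ by the flag \eqref{eqn:zeit}, since in either case the successive cokernels of the flag are exactly the tautological line bundles of Definition \ref{def:line}. I then apply \eqref{eqn:tangentspace} to $\bd^+$ and $\bd^-$ and use bilinearity of the virtual pairing $\CV/\CW := \CV \otimes \CW^\vee$ together with $(\CV^+_k)^\vee - (\CV^-_k)^\vee = \Delta_k^\vee$ to expand $[T\CM_{\bd^+}] - [T\CM_{\bd^-}]$ term by term. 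After re-indexing the outer sum from $k \in \{1,\ldots,n\}$ to $a \in [i,j)$ via the color convention \eqref{eqn:change 1} (which absorbs the required $\oq$-factors), the contributions of the four summand types in \eqref{eqn:tangentspace} are: $\sum_a \CL_a/u_a^2$ from the $A$-term, $-\sum_a u_{a+1}^2/(\CL_a q^2)$ from the $B$-term (the sign flip coming from dualization on $\CV_k$), and mixed terms involving $\CV^\pm/\CL_a$, $\CL_a/\CV^\pm$, and $\CL_a/\CL_b$ from the $X$-term and the gauge correction. These produce precisely the differences between \eqref{eqn:form1} and \eqref{eqn:form2}, as the terms $-\sum u^2_{a+1}/(\CL_a q^2)$ and $+\sum \CL_a/u_a^2$ switch sides between the two formulas.

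The main obstacle will be the combinatorial bookkeeping of the $\CL_a/\CL_b$-type cross-terms: these terms appear with identical coefficients in \eqref{eqn:form1} and \eqref{eqn:form2}, so they must cancel out of the contribution of $[T\CM_{\bd^+}] - [T\CM_{\bd^-}]$. Tracking this cancellation reduces to checking that the combination $\sum_{k=1}^n (\Delta_k/\Delta_{k-1} - \Delta_k/\Delta_k)$ (which arises from bilinearly expanding the $X$- and gauge-contributions in \eqref{eqn:tangentspace}) is absorbed entirely into the sums indexed by $a \equiv b \pmod n$ and $a \equiv b-1 \pmod n$, and the diagonal $a = b$ terms account for the constant $j-i$ already present in both formulas. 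For the pair \eqref{eqn:form3}--\eqref{eqn:form4} the argument proceeds verbatim; the only substantive change is that on $\ofZ_{[i;j)}$ the $Y$-maps act ``upward'' along the flag \eqref{eqn:zeit} (condition \eqref{eqn:almost}), which both replaces $\CL_a/(\CL_{a-1}q^2)$ by $\CL_a/\CL_{a-1}$ and swaps the residue conditions $a \equiv b-1$ and $a \equiv b+1 \pmod n$ in the cross-term bookkeeping.
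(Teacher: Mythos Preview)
Your approach is correct and coincides with what the paper does: the paper explicitly leaves this Proposition as an exercise, remarking only that it ``readily follows by combining \eqref{eqn:form1}--\eqref{eqn:form3} with the difference between $[T\CM_{\bd^+}]$ and $[T\CM_{\bd^-}]$, which can be computed using \eqref{eqn:tangentspace}.'' Your plan carries this out in detail, using the $K$--theoretic splitting $\CV^+_k = \CV^-_k + \sum_{a \equiv k} \CL_a$ coming from the flag \eqref{eqn:funk} (respectively \eqref{eqn:zeit}) and bilinearly expanding \eqref{eqn:tangentspace}; the observation that the $\CL_b/\CL_a$ cross-terms appear identically on both sides and hence must cancel against the $\Delta_k/\Delta_{k-1} - \Delta_k/\Delta_k$ contributions is the right way to organize the bookkeeping.
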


\subsection{}\label{sub:new}

We will introduce another geometric operator, which will be shown to correspond to the shuffle elements \eqref{eqn:g plus}--\eqref{eqn:g minus}. Consider the locus:
\begin{equation}
\label{eqn:graham}
\fW_{\bd^+,\bd^-} \subset \CM_{\bd^+} \times \CM_{\bd^-}
\end{equation}
consisting of pairs of parabolic sheaves $\CF^+_\bullet \subset \CF^-_\bullet$ such that the quotient $\CF^-_\bullet/\CF^+_\bullet$ is scheme-theoretically supported on the divisor $D' = \{0\} \times \BP^1$, i.e.:
$$
\CF^+_i \subset \CF^-_i \subset \CF^+_i(D')
$$
for all $i \in \BZ$. Note that $\fW_{\bd^+,\bd^-}$ is the $\BZ/n\BZ$--fixed locus of the space ${\mathfrak{V}}^{|\bd^+|-|\bd^-|}$ defined in Section 6.2 of \cite{mod}. The space ${\mathfrak{V}}^k$ was itself realized as a $\BZ/2\BZ-$fixed locus of a certain moduli space of sheaves, and this fact was used in Section 6.10 of \loccit to show that ${\mathfrak{V}}^k$ is smooth. Refining this argument would allow one to show that $\fW_{\bd^+,\bd^-}$ is smooth, and we leave the details to the interested reader. \\ 

\noindent Instead of actually proving that $\fW_{\bd^+,\bd^-}$ is smooth, we will work with its virtual structure sheaf, as in Subsections \ref{sub:koszul} and \ref{sub:dimcount}. To do so, we introduce the quiver presentation of $\fW_{\bd^+,\bd^-}$. Take a quotient of $n-$tuples of vector spaces: 
\begin{equation}
\label{eqn:twostep}
\bV^+ \twoheadrightarrow \bV^-
\end{equation}
of dimension vectors $\bd^+$ and $\bd^-$, and consider the vector space of quadruples:
$$
W_{\bd^+,\bd^-} = \Big\{ (\bX,\bY,\bA,\bB) \Big\} \subset M_{\bd^+}
$$
which preserve the quotient \eqref{eqn:twostep}, and such that $\bX$ vanishes on the kernel of \eqref{eqn:twostep}. Consider the restriction of the moment map \eqref{eqn:moment} to the vector space $W_{\bd^+,\bd^-}$:
$$
\xymatrix{W_{\bd^+,\bd^-} \ar@{^{(}->}[d] \ar[r]^-{\upsilon} & \fa := \oplus_{i=1}^{n} \text{Hom}(V_{i-1}^-, V_i^+) \ar@{^{(}->}[d] \\
M_{\bd^+} \ar[r]^-\nu & \oplus_{i=1}^{n} \text{Hom}(V_{i-1}^+, V_i^+)}
$$
The reason why the moment map $\nu|_W$ factors through the space on the top right is the fact that the maps $X_i$ vanish on $L_i := \text{Ker}(V_{i}^+ \twoheadrightarrow V_{i}^-)$. Therefore:
\begin{equation}
\label{eqn:quiverw}
\fW_{\bd^+,\bd^-} = \upsilon^{-1}(0)^s/P
\end{equation}
where $P \subset GL_{\bd^+}$ is the subgroup of automorphisms which preserve the quotient \eqref{eqn:twostep}. As in Definition \ref{def:koszul}, the presentation \eqref{eqn:quiverw} gives us a virtual structure sheaf $[\fW_{\bd^+,\bd^-}]$ as an element in the $K$--theory group of $\fW_{\bd^+,\bd^-}$. Moreover, the discussion in Subsection \ref{sub:dimcount} implies that the virtual tangent bundle is given by:
\begin{equation}
\label{eqn:vir smooth}
[T\fW_{\bd^+,\bd^-}] = [T W_{\bd^+,\bd^-}] - [\fa] - [\fp]
\end{equation} 
(the fact that $\fW_{\bd^+,\bd^-}$ is smooth of expected dimension is equivalent with \eqref{eqn:vir smooth} coinciding with the actual tangent bundle). To keep our formulas in line with our conventions on shuffle algebras, we slightly adjust the virtual structure sheaves:
\begin{align}
&[\fW_{\bd^+,\bd^-}^+] = [\fW_{\bd^+,\bd^-}] \cdot \frac {u_{(\bd^+ - \bd^-)+1}}{q^{|\bd^+| - |\bd^-| + \langle \bd^+ - \bd^-, \bd^+ \rangle}}  \label{eqn:adjust5} \\ 
&[\fW_{\bd^+,\bd^-}^-] = [\fW_{\bd^+,\bd^-}] \cdot \frac { (-1)^{|\bd^+| - |\bd^-|} u_{\bd^+-\bd^-}}{q^{\langle \bd^-, \bd^- - \bd^+ \rangle}\prod_{i=1}^n \det \CL_i}  \label{eqn:adjust6}
\end{align}
where $\bd^\pm$ are the degrees of the moduli spaces in \eqref{eqn:graham}, and we write:
\begin{align}
&u_{\bk} = \prod_{i=1}^n u_i^{k_i} \label{eqn:not 1} \\
&u_{\bk+1} = \prod_{i=1}^n u_{i+1}^{k_i} \label{eqn:not 2} \\
&\langle \bk,\bk' \rangle = \sum_{i=1}^n k_i k_i' - k_ik'_{i+1} \label{eqn:not 3}
\end{align}
for all $\bk,\bk' \in \nn$, and $\CL_i = \text{Ker}(\CV_i^+ \rightarrow \CV_i^-)$ denotes the rank $d^+_i - d^-_i$ tautological quotient bundle on $\fW_{\bd^+, \bd^-}$, which parametrizes the vector spaces $L_i$. Let us abbreviate:
\begin{equation}
\label{eqn:rum}
\fW_\bk = \bigsqcup_{\bd^+ - \bd^- = \bk} \fW_{\bd^+, \bd^-}, \qquad \fW = \bigsqcup_{\bk \in \nn} \fW_{\bk}
\end{equation}
Take the projections $\fW_{\bk} \stackrel{\pi^\pm}\longrightarrow \CM_{\bd^\pm}$ induced by \eqref{eqn:graham}, and construct the operators:
\begin{equation}
\label{eqn:opop}
g_{\pm \bk} : K \rightarrow K, \qquad \alpha \mapsto \pi^{\pm}_*\Big([\fW_{\bk}^\pm] \cdot \pi^{\mp *}(\alpha)\Big) 
\end{equation}
To work with the operators \eqref{eqn:opop}, we need to compute the tangent bundle to $\fW$. \\

\begin{proposition}
\label{prop:tangentw}

The virtual tangent bundle to $\fW$ is given by:
\begin{multline}
[T \fW] - [T\CM_{\bd^+}] = \sum_{i=1}^{n} \left[ \left(1 - \frac 1{q^2}\right) \left( \frac {\CV_i^+}{\CL_i} - \frac {\CV_{i}^+}{\CL_{i-1}} \right) + \frac {\CL_i}{\CL_{i-1}} - \frac {\CL_i}{\CL_i} - \frac{u_{i+1}^2}{\CL_iq^2} \right] \label{eqn:form5}
\end{multline}
or equivalently:
\begin{multline}
[T \fW] - [T\CM_{\bd^-}] = \sum_{i=1}^{n} \left[ \left(1 - \frac 1{q^2}\right) \left( \frac {\CL_{i+1}}{\CV_{i}^-} - \frac {\CL_i}{\CV_i^-} \right) + \frac {\CL_i}{\CL_{i-1}} - \frac {\CL_i}{\CL_i} + \frac {\CL_i}{u_i^2} \right] \label{eqn:form6}
\end{multline}

\end{proposition}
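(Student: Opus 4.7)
The plan is to apply \eqref{eqn:vir smooth} to the quiver presentation \eqref{eqn:quiverw}, compute each of $[TW_{\bd^+,\bd^-}]$, $[\fa]$ and $[\fp]$ in terms of the tautological bundles $\CV_i^\pm$, $\CL_i$ and the equivariant parameters, and then reshape the answer into the forms \eqref{eqn:form5} and \eqref{eqn:form6} by systematically splitting every class according to the short exact sequence $0 \to \CL_i \to \CV_i^+ \to \CV_i^- \to 0$, i.e.\ the $K$--theory identity $[\CV_i^+] = [\CV_i^-] + [\CL_i]$.

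Since $W_{\bd^+,\bd^-} \subset M_{\bd^+}$ is a linear subspace, $[TW_{\bd^+,\bd^-}]$ equals the class of $W_{\bd^+,\bd^-}$ itself, which I would determine by working through the four map types in \eqref{eqn:affine} one at a time (keeping in mind the $q^{-2}$ equivariant weights carried by the $X$ and $B$ summands in \eqref{eqn:mich}). The vanishing $X_i(L_i) = 0$ makes $X_i$ factor through $V_i^-$, giving a class $\CV_i^+/(\CV_i^- q^2)$; the constraint $Y_i(L_{i-1}) \subset L_i$ cuts out a subspace of $\Hom(V_{i-1}^+, V_i^+)$ of codimension $\dim \Hom(L_{i-1}, V_i^-)$, giving $\CV_i^+/\CV_{i-1}^+ - \CV_i^-/\CL_{i-1}$; the map $A_i$ is unconstrained with class $\CV_i^+/u_i^2$; and $B_i(L_{i-1}) = 0$ makes $B_i$ factor through $V_{i-1}^-$, giving $u_i^2/(\CV_{i-1}^- q^2)$.

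Next, the restricted moment map $\upsilon = \nu|_W$ lands in $\fa = \bigoplus_i \Hom(V_{i-1}^-, V_i^+)$, because each of $X_iY_i$, $Y_iX_{i-1}$ and $A_iB_i$ vanishes on $L_{i-1}$ under the constraints above, so $[\fa] = \sum_i \CV_i^+ / (\CV_{i-1}^- q^2)$. The parabolic $P \subset GL_{\bd^+}$ consists of automorphisms of $\bV^+$ preserving each $L_i$, which gives $[\fp] = \sum_i (\CV_i^+/\CV_i^+ - \CV_i^-/\CL_i)$. Plugging all of this into $[T\fW] = [TW] - [\fa] - [\fp]$ and expanding every fraction with $[\CV_i^+] = [\CV_i^-] + [\CL_i]$, the sum splits cleanly into terms involving only $\CV^-$ and terms containing at least one $\CL$. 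The former reassemble into $[T\CM_{\bd^-}]$ via \eqref{eqn:tangentspace} applied to $\bd^-$, while the $\CL$--terms, after rewriting $\sum_i \CL_i/\CV_{i-1}^-$ as $\sum_i \CL_{i+1}/\CV_i^-$ by a cyclic index shift, produce exactly the right-hand side of \eqref{eqn:form6}.

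To obtain \eqref{eqn:form5}, I would add to \eqref{eqn:form6} the difference $[T\CM_{\bd^+}] - [T\CM_{\bd^-}]$ computed directly from \eqref{eqn:tangentspace} with the same splitting $\CV_i^+ = \CV_i^- + \CL_i$; another small index shift converts the resulting expression into the stated form. No step is conceptually hard; the only real pitfall is keeping track of the $q^{\pm 2}$ twists attached to the $X$, $B$, and moment-map contributions together with the cyclic index conventions, which is exactly why expanding everything additively via the short exact sequence above is essential.
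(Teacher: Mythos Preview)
Your proposal is correct and follows essentially the same approach as the paper: compute $[TW_{\bd^+,\bd^-}]$, $[\fa]$, $[\fp]$ from the quiver description and assemble them via \eqref{eqn:vir smooth}. The only cosmetic differences are that the paper invokes Claim \ref{claim:croissant} to express each piece in terms of $\CV_i^+$ and $\CL_i$ (rather than $\CV_i^-$), and it derives \eqref{eqn:form5} directly and leaves \eqref{eqn:form6} as an exercise, whereas you derive \eqref{eqn:form6} first and obtain \eqref{eqn:form5} by adding $[T\CM_{\bd^+}]-[T\CM_{\bd^-}]$; your formulas for each ingredient agree with the paper's after using $[\CV_i^+]=[\CV_i^-]+[\CL_i]$.
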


\subsection{}\label{sub:syt}

Let us now compute the operators \eqref{eqn:opop1}, \eqref{eqn:opop2} and \eqref{eqn:opop} in the basis of fixed points, as well as in the integral notation of Theorem \ref{thm:act}. Recall from Subsection \ref{sub:fixedpoints} that fixed points of $\CM_\bd$ are indexed by $n-$tuples of partitions $\bla$ as in \eqref{eqn:tuple}. As explained in \cite{FFNR}, fixed points of the simple correspondence $\fZ_i = \fZ_{[i;i+1)} = \ofZ_{[i;i+1)}$ are pairs of such $n$--tuples:
$$
\bla \geq_i \bmu \qquad \text{by which we mean that} \qquad \bla \geq \bmu \text{ and } |\blamu| = \bs^i 
$$
In other words, $\bla$ is obtained from $\bmu$ by adding a box of color $i$. Iterating this logic shows us that fixed points of the fine correspondence $\fZ_{[i;j)}$ are given by collections:
\begin{equation}
\label{eqn:sytflag}
\bla = \bnu^{j} \geq_{j-1} \bnu^{j-1} \geq_{j-2} ... \geq_{i+1} \bnu^{i+1} \geq_i \bnu^i = \bmu
\end{equation}
where $\bnu^{j-1},...,\bnu^{i+1}$ are all $n-$tuples of Young diagrams. \\

\begin{definition}
\label{def:syt}

A {\bf standard Young tableau} (abbreviated $\syt$) of shape $\blamu$ is a way to label the boxes of $\blamu$ with the integers $i,...,j-1$:
\begin{equation}
\label{eqn:syt}
\blamu = \Big\{ \sq_i, ..., \sq_{j-1} \Big\} 
\end{equation}
such that the labels match the colors of the boxes modulo $n$, and within each of the constituent skew partitions of $\blamu$, the labels increase as one goes up or to the right. \\

\end{definition}

\noindent It is easy to see that a SYT is the same datum as the flag \eqref{eqn:sytflag}, so they also parametrize fixed points of the fine correspondences $\fZ_{[i;j)}$. Then the restriction of the tautological line bundles of Definition \ref{def:line} to such a fixed point are given by:
\begin{equation}
\label{eqn:tautline}
\CL_k|_{\text{SYT of shape }\blamu} = \chi_k := \chi_{\sq_k}
\end{equation}
where the box $\sq_k$ is assumed to have color $k$ and weight given by formula \eqref{eqn:weight}. We will refer to $\bla$ (resp. $\bmu$) as the upper (resp. lower) shape of the given SYT. \\

\subsection{}\label{sub:asyt} 

As for the fixed points of the eccentric correspondences $\ofZ_{[i;j)}$ of \eqref{eqn:eccentric}, there is a description similar to the previous Subsection. One must replace the flag \eqref{eqn:funk} by the flag \eqref{eqn:zeit} and the two bullets in Subsection \ref{sub:fine} by the two bullets of Subsection \ref{sub:eccentric}. This leads to the following definition: \\

\begin{definition}
\label{def:asyt}
	
An {\bf almost standard Young tableau} (abbreviated $\asyt$) of shape $\blamu$ is a way to label the boxes of $\blamu$ with the integers $i,...,j-1$:
\begin{equation}
\label{eqn:asyt}
\blamu = \Big\{ \sq_i, ..., \sq_{j-1} \Big\} 
\end{equation}
such that the labels match the colors of the boxes modulo $n$, and within each of the constituent skew partitions of $\blamu$, the labels decrease as one goes up or to the right, with the only possible exception that:
\begin{equation}
\label{eqn:exception}
\sq_{a} \text{ is allowed to be directly above } \sq_{a-1}
\end{equation}
for all $a\in \{i+1,...,j-1\}$. \\
	
\end{definition}

\noindent The exception \eqref{eqn:exception} arises from the fact that the $\bY$ maps ``almost" preserve the flag \eqref{eqn:zeit}, as in \eqref{eqn:almost}. This is the reason why one cannot build a full flag of intermediate partitions analogous to \eqref{eqn:sytflag} from the datum of an ASYT. However, it does make sense to construct a partial flag of intermediate partitions:
\begin{equation}
\label{eqn:asytflag}
\bla = \bnu^0 \geq \bnu^{1} \geq ... \geq \bnu^{t-1} \geq \bnu^t = \bmu
\end{equation}
where $\bnu^{s-1} \backslash \bnu^{s}$ is a vertical strip of boxes of colors $k_{s-1},k_{s-1}+1,...,k_s-1$ for some: 
\begin{equation}
\label{eqn:asytflag2}
i = k_0 < k_{1} < ... < k_{t-1} < k_t = j
\end{equation}
The data \eqref{eqn:asytflag} and \eqref{eqn:asytflag2} determine an ASYT completely. \\

\noindent Finally, we must describe fixed points of the smooth correspondences $\fW$ of Subsection \ref{sub:new}. As in \eqref{eqn:graham}, such a fixed point consists of two $n-$tuples of partitions $\bla$ and $\bmu$. Since the corresponding parabolic sheaves must be contained inside each other, we must have $\bla \geq \bmu$. The condition that the $\bX$ maps vanish on the kernel of \eqref{eqn:twostep} is equivalent to no box of $\blamu$ being immediately to the right of any other box. This means that:
\begin{equation}
\label{eqn:strips}
\blamu = S_1 \sqcup S_2 \sqcup ... \sqcup S_t
\end{equation}
where every $S_i$ is a vertical strip of boxes, and no two strips have a common edge. The strips in \eqref{eqn:strips} are unordered, as opposed from the setup of (almost) standard Young tableaux. To summarize,
\begin{equation}
\label{eqn:strip}
\fW^T = \Big \{(\bla \geq \bmu) \text{ s.t. } \blamu \text{ is a union of strips \eqref{eqn:strips}} \Big \}
\end{equation}

\subsection{}\label{sub:push}

We will use the description of the fixed points of $\fZ_{[i;j)}$, $\ofZ_{[i;j)}$ and $\fW$ to prove formulas for the push-forward of classes under the projection maps $p^\pm$, $\op^\pm$ and $\pi^\pm$ from each of these varieties to $\CM_{\bd^\pm}$. The subsequent Propositions will be key to computing the matrix coefficients of the operators \eqref{eqn:opop1}, \eqref{eqn:opop2} and \eqref{eqn:opop}. Recall the normal-ordered integral of Definition \ref{def:normal}. \\

\begin{proposition}
\label{prop:pushsyt}

For any Laurent polynomial $M(z_i,...,z_{j-1})$ with coefficients pulled-back from $K_{T}(\CM_{\bd^\mp})$, we have the following push-forward formula:
\begin{multline}
p^\pm_*\Big( M \left(\CL_i,...,\CL_{j-1}\right) [\fZ_{[i;j)}^\pm] \Big) = \\ = \int^\pm \frac {M(z_i,...,z_{j-1}) \prod_{a=i}^{j-1} \left[ \overline{\zeta\left(\frac {z_a^{\pm 1}}{X^{\pm 1}} \right)} \tau_\pm(z_a)  \right]^{\pm 1} Dz_a}{q^{(j-i)\delta_\pm^-} \prod_{a=i+1}^{j-1} \left(1 - \frac {z_a}{z_{a-1}q^2}\right)\prod_{i \leq a < b < j} \zeta \left( \frac {z_a}{z_b} \right)} \label{eqn:pushsyt}
\end{multline}
where the adjusted virtual class $[\fZ^\pm_{[i;j)}]$ was defined in equations \eqref{eqn:adjust1}--\eqref{eqn:adjust2}. \\

\end{proposition}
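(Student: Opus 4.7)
The plan is to prove \eqref{eqn:pushsyt} by checking that both sides have the same restriction to every torus fixed point of $\CM_{\bd^\pm}$, via equivariant localization on the left--hand side and the residue theorem on the right--hand side. Both sides, a priori operators from $K_T(\CM_{\bd^\mp}) \to K_T(\CM_{\bd^\pm})$, are determined by their action on the fixed--point basis, so this suffices. An alternative (and equivalent) approach would be to use the quiver description \eqref{eqn:fine} to rewrite $p^\pm_*$ as a parabolic Weyl--type integration over the Chern roots of the successive quotient line bundles $\CL_i, \ldots, \CL_{j-1}$, with the Koszul complex of $\eta$ contributing to the integrand; this gives the contour integral directly, but we will describe the localization approach as it is more transparent.

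For the left--hand side, equivariant localization for the proper map $p^\pm : \fZ_{[i;j)} \to \CM_{\bd^\pm}$ expresses $(p^\pm_* M \,[\fZ^\pm_{[i;j)}])|_{\bmu_\pm}$ as a sum over fixed points of $\fZ_{[i;j)}$ lying above $\bmu_\pm$. By Subsection \ref{sub:syt}, these are exactly the standard Young tableaux of shape $\bla \backslash \bmu$ with $\bla = \bmu_+$ or $\bmu = \bmu_-$ fixed, and the tautological line bundle $\CL_a$ restricts to $\chi_a = \chi_{\sq_a}$ by \eqref{eqn:tautline}. Each such SYT contributes
$$ M(\chi_i, \ldots, \chi_{j-1}) \cdot \frac{[\fZ^\pm_{[i;j)}]|_{\mathrm{SYT}}}{\wedge^\bullet\bigl(T^\vee_{\mathrm{SYT}} \fZ_{[i;j)} - p^{\pm *} T^\vee_{\bmu_\pm} \CM_{\bd^\pm}\bigr)}, $$
and the virtual relative tangent class in the denominator is provided explicitly by formulas \eqref{eqn:form1} and \eqref{eqn:form2} of Propositions \ref{prop:tangent} and \ref{prop:tangent 2}. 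The various summands there are exactly what one needs: the $\CL_a/\CL_{a-1}q^2$ terms produce factors $1 - \chi_a/(\chi_{a-1} q^2)$, the $\CL_b/\CL_a$ terms produce $\zeta(\chi_a/\chi_b)$--type factors, and the $\CV^\pm/\CL$ terms together with $\tau_\pm(\chi_a)$ assemble into the corner products of Proposition \ref{prop:restrictions}.

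For the right--hand side, we evaluate $\int^\pm$ iteratively by residues in $z_{j-1}, z_{j-2}, \ldots, z_i$. After restricting the tautological class $\overline{\zeta(z_a^{\pm 1}/X^{\pm 1})}$ to $\bmu_\pm$ via Proposition \ref{prop:restrictions}, the poles in $z_a$ sit precisely at the weights $\sqrt{\chi_\bsq}$ of outer corners (for $\pm = +$) or inner corners (for $\pm = -$) of $\bmu_\pm$ of colour $a$. The additional denominator factors $1 - z_a/(z_{a-1} q^2)$ and $\zeta(z_a/z_b)$ interact with these poles so that, as we recurse through the residues, the locations $z_a = \chi_{\sq_a}$ are forced to form a valid SYT of shape $\bla \backslash \bmu$: the ``right--neighbour'' pole at $z_a = z_{a-1} q^2$ controls the horizontal/vertical relations between successive boxes in the tableau, while the poles from $\zeta(z_a/z_b)^{-1}$ enforce the row/column ordering of Definition \ref{def:syt}. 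The orientation and the small/large contour choice in Definition \ref{def:normal} are exactly what is needed to select the outer--corner poles (for $+$) or inner--corner poles (for $-$) and to discard vanishing contributions.

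The main obstacle is Step 3, matching the SYT sums coming from the two sides. The nontrivial check is that for each SYT, the residue contribution on the right reproduces, after rewriting via Proposition \ref{prop:restrictions}, exactly the ratio of tangent weights from Propositions \ref{prop:tangent}--\ref{prop:tangent 2}, and that the power of $q$ and the monomials $u_k$ prescribed by the normalizations \eqref{eqn:adjust1}--\eqref{eqn:adjust2} match the $q^{-(j-i)\delta_\pm^-}$ and $\tau_\pm$ factors on the right. The base case $j = i + 1$, where $\fZ_i$ is smooth and no residue step is required, reduces to the single--variable push--forward formula underlying the action $\UU \curvearrowright K$ of Theorem \ref{thm:act}; the general case then follows by treating $\fZ_{[i;j)}$ as a repeated convolution of simple correspondences and iterating the residue computation, using that the shuffle multiplication \eqref{eqn:mult} is designed to be compatible with this convolution.
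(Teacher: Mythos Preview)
Your overall strategy---equivariant localization on the left, iterated residues on the right, matching SYT by SYT---is exactly what the paper does. A few points where your description diverges from how the argument actually runs:

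\textbf{The contour-moving step is missing.} The normal-ordered integral $\int^+$ of Definition \ref{def:normal} is a signed sum over $2^{j-i}$ contour configurations. Before any residues can be taken, the paper shows that under the assumption $|q|,|\oq|<1$ (resp. $>1$) the inter-variable poles coming from $\zeta(z_a/z_b)^{-1}$ do not obstruct deforming all contours to the nested arrangement $z_{j-1}\prec\ldots\prec z_i\prec\{0,\infty\}$ for $+$ (resp. $z_i\prec\ldots\prec z_{j-1}$ for $-$); this is the content of Proposition \ref{prop:pushsyt change}, and it is proved \emph{simultaneously} with \eqref{eqn:pushsyt}. You should make this step explicit, since without it the iterated residue calculation does not get off the ground.

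\textbf{The mechanism producing the SYT constraint is not the one you describe.} You write that the pole $z_a=z_{a-1}q^2$ ``controls the horizontal/vertical relations between successive boxes.'' In fact, in the nested-contour picture that pole is never picked up: when integrating $z_{j-1}$ first, the variable $z_{j-2}$ sits on a contour closer to $\{0,\infty\}$, so $z_{j-2}q^2$ lies outside the region enclosed by the $z_{j-1}$ contour. The only finite poles in $z_{j-1}$ are the outer-corner weights of $\bla$ supplied by \eqref{eqn:Gamma+}. After taking that residue and absorbing the remaining $\zeta$-factors, the finite poles in $z_{j-2}$ become the outer-corner weights of $\bla\backslash\sq_{j-1}$, and so on. The SYT shape emerges because each successive box is an outer corner of the partially-stripped partition, not from any $z_a=z_{a-1}q^2$ residue. (Those residues \emph{do} play a role for the eccentric correspondence in Proposition \ref{prop:pushasyt}, where the denominator is $1-z_{a-1}/z_a$ instead; keep the two cases separate.)

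\textbf{The convolution shortcut at the end is not needed and would require extra work.} The paper carries out the full computation directly for arbitrary $j-i$; it does not reduce to the simple case $j=i+1$ by induction. Your suggestion to view $\fZ_{[i;j)}$ as a convolution of simple correspondences is geometrically reasonable at the level of the underlying spaces, but one would then have to check that the virtual class $[\fZ^\pm_{[i;j)}]$ defined via the Koszul complex of $\eta$ coincides with the convolution of the (smooth) simple classes, and that the line-bundle twists in \eqref{eqn:adjust1}--\eqref{eqn:adjust2} factor compatibly. This is more work than the direct calculation.

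Minor: the poles are at $z_a=\chi_\bsq$, not $\sqrt{\chi_\bsq}$; and for the $-$ sign the residues are taken in the order $z_i,z_{i+1},\ldots,z_{j-1}$, picking up inner corners, which you should state explicitly.
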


\begin{proposition}
\label{prop:pushasyt}

For any Laurent polynomial $M(z_i,...,z_{j-1})$ with coefficients pulled-back from $K_{T}(\CM_{\bd^\mp})$, we have the following push-forward formula:
\begin{multline}
\op^\pm_*\Big( M \left(\CL_i,...,\CL_{j-1}\right) [\ofZ_{[i;j)}^\pm] \Big) = \\ = \int^\pm \frac {M(z_i,...,z_{j-1}) \prod_{a=i}^{j-1} \left[ \overline{\zeta\left(\frac {z_a^{\pm 1}}{X^{\pm 1}} \right)} \tau_\pm(z_a) \right]^{\pm 1} Dz_a }{q^{(j-i)\delta_\pm^-} \prod_{a=i+1}^{j-1} \left(1 - \frac {z_{a-1}}{z_a} \right)\prod_{i \leq a < b < j} \zeta \left( \frac {z_b}{z_a} \right)} \label{eqn:pushasyt}
\end{multline}
where the adjusted virtual class $[\ofZ^\pm_{[i;j)}]$ was defined in equations \eqref{eqn:adjust3}--\eqref{eqn:adjust4}. \\

\end{proposition}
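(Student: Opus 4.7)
The plan is to establish \eqref{eqn:pushasyt} by $T$-equivariant localization, checking the identity after evaluating both sides at an arbitrary fixed point of $\CM_{\bd^\pm}$; I treat the $+$ sign in detail, since the $-$ sign case is symmetric. Fix a torus fixed point $\bla$ of $\CM_{\bd^+}$ and pair both sides against $|\bla\rangle$. By Subsection \ref{sub:asyt}, the $T$-fixed points of $\ofZ_{[i;j)}$ lying over $\bla$ under $\op^+$ are in bijection with almost standard Young tableaux $\{\sq_i,\ldots,\sq_{j-1}\}$ of upper shape $\bla$ and some lower shape $\bmu \leq \bla$ with $|\blamu| = [i;j)$. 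Virtual equivariant localization \eqref{eqn:eqloc vir}, combined with the adjustment \eqref{eqn:adjust3} and the virtual tangent class \eqref{eqn:form3} from Proposition \ref{prop:tangent}, turns the LHS into an explicit sum indexed by these tableaux.

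For the RHS I would evaluate the normal-ordered integral by iterated residues in the order $z_{j-1}, z_{j-2},\ldots,z_i$. After restriction to $\bla$, the integrand has simple poles of two kinds: (a) poles at $z_a = \chi_\sq$ for outer corners $\sq$ of color $a$ of the intermediate partition, coming from $\overline{\zeta(z_a/X)}\tau_+(z_a)|_\bla$ and described by \eqref{eqn:Gamma+} of Proposition \ref{prop:restrictions}; and (b) poles at $z_{a-1} = z_a$ produced by the denominator factors $1 - z_{a-1}/z_a$. The zeros of $\zeta(z_b/z_a)$ at $z_b = z_aq^2$ (for boxes in the same residue class mod $n$) and at $z_b = z_a$ (for consecutive residue classes), coming from \eqref{eqn:def zeta exp}, kill most of the naive residue combinations. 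The surviving contributions organize themselves precisely according to the partial-flag data \eqref{eqn:asytflag}--\eqref{eqn:asytflag2}: a type (a) residue corresponds to beginning a new vertical strip at a breakpoint $k_s$, whereas a consecutive run of type (b) residues fills in the remaining boxes of a single vertical strip, realizing the exception \eqref{eqn:exception} of Definition \ref{def:asyt} whereby $\sq_a$ sits directly above $\sq_{a-1}$.

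The main technical step---and the principal obstacle---will be matching residue weights against virtual-localization weights term by term. The contributions to \eqref{eqn:form3} of the form $(1 - q^{-2})(\CV^+_a/\CL_a - \CV^+_{a+1}/\CL_a)$, of the form $\mp\CL_b/\CL_a$ for $a \equiv b$ or $a \equiv b+1$, of the form $\CL_a/\CL_{a-1}$, and of the form $u_{a+1}^2/\CL_aq^2$, must be reproduced respectively from the residues of $\overline{\zeta(z_a/X)}\tau_+(z_a)$, from the $\zeta(z_b/z_a)$ factors, from the denominators $1 - z_{a-1}/z_a$, and from the $\tau_+$ factors, all balanced against the prefactor in \eqref{eqn:adjust3}. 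Careful bookkeeping of the $\oq$-powers introduced by the color-change convention \eqref{eqn:color change} whenever the arc $[i;j)$ crosses a residue class boundary modulo $n$ will be essential, and is the reason the full normal-ordered contour \eqref{eqn:intplus}, rather than a single small-circle integral, must be invoked.
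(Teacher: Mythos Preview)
Your overall architecture---localize the LHS over the $\asyt$ fixed points of $\ofZ_{[i;j)}$, then evaluate the RHS as a sum of residues and match---is exactly the paper's. But the residue analysis you sketch has a genuine gap, and the mechanism you name for the cancellations is not the one that actually works.

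First, the order of integration. The paper does \emph{not} integrate $z_{j-1},z_{j-2},\ldots,z_i$ one at a time. Starting from the normal-ordered contour and attempting to separate the variables, one finds that the poles $z_a = z_{a-1}$ from the factors $(1-z_{a-1}/z_a)^{-1}$ cannot be avoided: one is forced to pick up a sum over \emph{chains} $i=k_0<k_1<\cdots<k_t=j$, in which the variables $z_{k_{s-1}},\ldots,z_{k_s-1}$ are specialized to a common value $y_s$ (this is the content of Proposition \ref{prop:pushasyt change}, proved jointly). Only then does one integrate the link variables $y_1,\ldots,y_t$, and crucially the link containing $z_i$ is integrated \emph{first}, not last.

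Second, and more importantly, the reason the residue sum collapses to $\asyt$'s is not ``zeros of $\zeta(z_b/z_a)$''. When one integrates $y_1$, \eqref{eqn:Gamma+} gives a pole at every outer corner of color $c$ for \emph{any} $c\in\{i,\ldots,k_1-1\}$, not only $c=k_1-1$. The same physical residue is then reproduced by every \emph{finer} chain in which the first link $\{z_i,\ldots,z_{k_1-1}\}$ is subdivided above position $c$, and these contributions come with alternating signs. The paper sums them by the binomial identity
\[
\sum_{r=0}^{k_1-1-c}(-1)^r\binom{k_1-1-c}{r}=\delta_{k_1-1}^{c},
\]
which forces the outer corner to sit at the \emph{top} of each link, i.e.\ the link is a removable vertical strip. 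Iterating over $y_2,\ldots,y_t$ then produces exactly the flag \eqref{eqn:asytflag}. Without this inclusion--exclusion step you will not be able to explain why only $\asyt$ configurations survive; the $\zeta$ zeros alone do not kill the unwanted residues.
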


\noindent In the subsequent Propositions, the $...$ in the left and right-hand sides will refer to the $K$--theory class and the integrand (respectively) in the left and right-hand sides (respectively) of either formula \eqref{eqn:pushsyt} or \eqref{eqn:pushasyt}. \\

\begin{proposition}
\label{prop:pushsyt change}

Formula \eqref{eqn:pushsyt} implies the following:
\begin{equation}
\label{eqn:contour 1}  
p_*^+ \left(... \ [\fZ_{[i;j)}^+] \right) = \int_{z_{j-1} \prec ... \prec z_i \prec \{0,\infty\}} ...
\end{equation}
\begin{equation}
\label{eqn:contour 2}
p_*^- \left(... \ [\fZ_{[i;j)}^-] \right) = \int_{z_{i} \prec ... \prec z_{j-1} \prec \{0,\infty\}} ...
\end{equation}
where the notation $z_{j-1} \prec ... \prec z_i \prec \{0,\infty\}$ means that the variables $z_{j-1}$,...,$z_i$ are integrated over successive concentric contours, with $z_{j-1}$ farthest from and $z_i$ closest to $0$ and $\infty$. The distance between any two contours, as well as between the contours and $0$,$\infty$, must be $\gg$ the sizes of all the equivariant parameters. \\

\end{proposition}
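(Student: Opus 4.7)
The Proposition asserts that the normal-ordered integrals produced by \eqref{eqn:pushsyt} can be recast as single iterated integrals with nested contours. The proof amounts to converting the $2^{j-i}$-fold sum implicit in Definition \ref{def:normal} into an equivalent nested-contour prescription via the residue theorem \eqref{eqn:residue theorem}.

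My plan is to proceed by induction on $j-i$, handling the two formulas \eqref{eqn:contour 1} and \eqref{eqn:contour 2} in parallel. The base case $j-i=1$ is immediate from \eqref{eqn:residue theorem}, since there is a single variable $z_i$ and the nested-contour integral is literally a difference of a small and a large circle, which matches the $\sigma : \{i\} \to \{\pm 1\}$ sum of Definition \ref{def:normal}. For the inductive step in the $+$ case, I would integrate out the outermost variable $z_i$ first (i.e.\ the one appearing rightmost in $z_{j-1} \prec \cdots \prec z_i \prec \{0,\infty\}$) by writing its contour as a formal difference of the very small and very large circles prescribed by the normal-ordered integral. The only inter-variable pole that lies strictly between these two extreme contours and involves $z_i$ is the simple pole at $z_{i+1} = z_iq^2$ coming from the factor $(1 - z_{i+1}/(z_iq^2))^{-1}$; taking its residue eliminates this factor from the denominator and reduces the integrand to one indexed by $\{i+1,\dots,j-1\}$, to which the inductive hypothesis applies. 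The two pieces (small-circle minus large-circle on $z_i$, plus the residue at $z_{i+1}=z_iq^2$) together assemble the full nested contour. The $-$ case is handled symmetrically, with the pole $z_{a-1}=z_a$ of $(1 - z_{a-1}/z_a)^{-1}$ playing the role of $z_a=z_{a-1}q^2$.

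The principal technical obstacle is the contribution of the factors $\prod_{i \leq a < b < j} \zeta(z_a/z_b)$ in the denominator of \eqref{eqn:pushsyt}. Inverting $\zeta$ according to \eqref{eqn:def zeta exp}, one sees that these produce potential inter-variable poles such as $z_a = z_b q^{-2}$ when colors match, or $z_a = z_b$ when colors differ by one; a priori, residues at these poles could spoil the simple recursion sketched above. The expectation is that such residues vanish, either because the contour-deformation path does not cross these poles once the successive radii in the nested prescription are chosen correctly (so that the small contours satisfy $|z_{j-1}| \gg \cdots \gg |z_i|$ while the large contours satisfy $|z_{j-1}| \ll \cdots \ll |z_i|$), or because the remaining $\zeta$ factors furnish compensating zeros at the wheel-like loci analogous to \eqref{eqn:wheel}. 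Verifying this vanishing rigorously, across all color cases of \eqref{eqn:def zeta exp} and for both orderings in \eqref{eqn:contour 1} and \eqref{eqn:contour 2}, is the main bookkeeping task of the proof.
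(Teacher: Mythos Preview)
Your proposal misidentifies the inter-variable pole structure, and this is a genuine gap. The factor $\left(1 - \frac{z_{i+1}}{z_iq^2}\right)^{-1}$ in the denominator of \eqref{eqn:pushsyt} does \emph{not} give a pole of the full integrand at $z_{i+1}=z_iq^2$: the denominator also contains $\zeta\!\left(\frac{z_i}{z_{i+1}}\right)$, which for consecutive colors equals $\dfrac{z_i-z_{i+1}}{z_iq-z_{i+1}q^{-1}}$ by \eqref{eqn:def zeta exp}, and its reciprocal has a \emph{zero} at $z_{i+1}=z_iq^2$ that exactly cancels your supposed pole. After this cancellation the combined factor is $\dfrac{z_iq}{z_i-z_{i+1}}$, whose pole is at $z_i=z_{i+1}$. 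So the residue you plan to take in your inductive step simply does not exist, and your induction never gets off the ground. You also appear to conflate the $\pm=-$ case of \eqref{eqn:pushsyt} with the eccentric integrand \eqref{eqn:pushasyt}: the factor $\left(1-\frac{z_{a-1}}{z_a}\right)^{-1}$ appears in the latter, not in the $-$ sign of the former.

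The paper's argument is both simpler and essentially the opposite of yours. It observes (once the cancellation above is taken into account) that the \emph{only} inter-variable poles of the integrand are those coming from $\zeta(z_a/z_b)^{-1}$, namely $z_a=z_b\,\oq^{2(b-a-1)/n}$ when $a\equiv b-1$ and $z_a=z_bq^{-2}\oq^{2(b-a)/n}$ when $a\equiv b$, always with $a<b$. Under the size assumptions $|q|^{\pm 1},|\oq|^{\pm 1}<1$ built into Definition~\ref{def:normal}, these poles never lie between the normal-ordered contours and the nested contours prescribed in \eqref{eqn:contour 1}--\eqref{eqn:contour 2}, so one may deform from one configuration to the other without picking up \emph{any} residues. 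There is no induction and no residue computation: the ``technical obstacle'' you flagged is the entire content of the proof, and it is resolved not by wheel-type cancellations but by a direct check of where the $\zeta$-poles sit relative to the contours.
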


\begin{proposition}
\label{prop:pushasyt change}

Formula \eqref{eqn:pushasyt} implies the following:
\begin{multline}
\op_*^+ \left(... \ [\ofZ_{[i;j)}^+] \right) = \sum^{t\geq 1}_{i = k_0 < k_1 < ... < k_t = j} \\ \int_{z_i = ... = z_{k_1-1} \prec ... \prec z_{k_{t-1}} = ... = z_{j-1} \prec \{0,\infty\}} (1-1)^{j-i-t} ... \label{eqn:contour 3} 
\end{multline}
\begin{multline}
\op_*^- \left(... \ [\ofZ_{[i;j)}^-] \right) = \sum^{t\geq 1}_{i = k_0 < k_1 < ... < k_t = j} \\ \int_{z_{j-1} = ... = z_{k_{t-1}}  \prec ... \prec z_{k_1-1} = ... = z_{i} \prec \{0,\infty\}} (1-1)^{j-i-t} ... \label{eqn:contour 4} 
\end{multline}
In the right-hand sides of \eqref{eqn:contour 3} and \eqref{eqn:contour 4}, we specialize the variables $z_i,...,z_{k_1-1}$ to a common value $y_1$, the variables $z_{k_1},...,z_{k_2-1}$ to another common value $y_2$, ..., and the variables $z_{k_{t-1}},...,z_{j-1}$ to a common value $y_t$. As one performs this specialization, the integrand in the right-hand side of \eqref{eqn:pushasyt} would gain $j-i-t$ factors of $(1-1)$ in the denominator, stemming from $1-\frac {z_{a-1}}{z_a}$ when both $z_{a-1}$ and $z_a$ are specialized to the same $y_i$. These factors must be eliminated in order for the right-hand sides of \eqref{eqn:contour 3} and \eqref{eqn:contour 4} to make sense, which is accomplished by the factor $(1-1)^{j-i-t}$. Once one takes this into account, the second rows of formulas \eqref{eqn:contour 3}--\eqref{eqn:contour 4} are contour integrals in the variables $y_1,...,y_t$, with the contours being very far away from each other and from $0,\infty$. \\

\end{proposition}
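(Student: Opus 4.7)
The approach is contour deformation, starting from the normal-ordered integral in \eqref{eqn:pushasyt} and iteratively moving the variables onto nested circles. I treat the $+$ case leading to \eqref{eqn:contour 3}; the $-$ case \eqref{eqn:contour 4} is entirely analogous once one reverses the roles of small and large contours in Definition~\ref{def:normal}.

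By Definition~\ref{def:normal}, $\int^+$ is a sum of $2^{j-i}$ terms indexed by sign functions $\sigma: \{i,\ldots,j-1\}\to\{\pm 1\}$, in which each $z_a$ is integrated over a circle of radius $\gamma^{\sigma(a)}\oq^{-\frac{2a}{n}}$. The goal is to rewrite this sum as a sum over ordered partitions $i = k_0 < k_1 < \cdots < k_t = j$, in which the blocks $\{k_{s-1},\ldots,k_s-1\}$ are collapsed to common variables $y_s$ living on concentric contours as in \eqref{eqn:contour 3}. Starting from a configuration in which all $z_a$'s sit on contours of comparable size, the plan is to slide $z_{j-1}$ onto the most central contour, then $z_{j-2}$ onto the next, and so on, appealing to the residue theorem \eqref{eqn:residue theorem} at each step.

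The key observation is that the only poles relevant to this deformation come from the factors $\tfrac{1}{1-z_{a-1}/z_a}$ at $z_a = z_{a-1}$. The remaining denominator factors $\zeta(z_b/z_a)$ with $a < b$ have their poles (from \eqref{eqn:def zeta exp}) at values of $z_b/z_a$ equal to $1$ or $q^{\pm 2}$ up to $\oq$-adjustments; the prescribed contour radii are calibrated precisely so that these poles sit on the ``wrong'' side of the deformation, and when they are formally crossed the corresponding residues vanish thanks to compensating zeros in the numerator of $\zeta$ (the same cancellation that underlies the well-definedness of the action \eqref{eqn:p}). Thus deforming $z_a$ past $z_{a-1}$ yields a binary choice: either the two variables end up on distinct nested contours, or we pick up the residue at $z_a = z_{a-1}$, which amounts to specializing $z_a \to z_{a-1}$ in the integrand.

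Iterating these binary choices, each term in the resulting sum is indexed by the set $A = \{a : z_a \text{ was specialized to } z_{a-1}\} \subset \{i+1,\ldots,j-1\}$; the complement $\{k_1,\ldots,k_{t-1}\} = \{i+1,\ldots,j-1\}\setminus A$ partitions $[i,j)$ into $t$ blocks, each of which is collapsed to a single variable $y_s$ on its own nested contour. A block of length $\ell_s = k_s - k_{s-1}$ contributes $\ell_s - 1$ residues, so the total number of residues is $\sum_s(\ell_s - 1) = (j-i) - t$, and each such residue removes exactly one denominator factor $1-z_{a-1}/z_a$. The formal convention adopted in the statement is to leave all of these $j-i-t$ factors in the denominator (where after specialization they read $1-1$) and compensate by the prefactor $(1-1)^{j-i-t}$ in the numerator, yielding the compact form in \eqref{eqn:contour 3}. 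The main obstacle is the verification, indicated in the previous paragraph, that no $\zeta$-pole contributes during the deformation; once this is established the remainder is routine bookkeeping of contours and residues, and the $-$ case goes through with all directions reversed.
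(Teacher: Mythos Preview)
Your approach is essentially the paper's: start from the normal-ordered integral $\int^+$, deform the contours so that $z_b$ sits closer to $\{0,\infty\}$ than $z_a$ whenever $a<b$, and record the residues at $z_a=z_{a-1}$ to produce the sum over chains $i=k_0<\cdots<k_t=j$.

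One point needs correction. You write that the $\zeta$-poles ``sit on the wrong side'' of the deformation and, in addition, that ``when they are formally crossed the corresponding residues vanish thanks to compensating zeros in the numerator of $\zeta$.'' Only the first half of this is true, and it is the entire argument. The poles of $\prod_{a<b}\zeta(z_b/z_a)^{-1}$ occur at $z_a=z_b\,\oq^{\frac{2(b-a+1)}n}$ for $a\equiv b+1$ and at $z_a=z_b\,q^{2}\,\oq^{\frac{2(b-a)}n}$ for $a\equiv b$; with the convention $|q|,|\oq|<1$ built into \eqref{eqn:intplus}, these always place $z_a$ on a strictly smaller circle than $z_b$, so they are simply never crossed as you push $z_a$ outward and $z_b$ inward. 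There is no cancellation of residues to invoke, and none is available: the numerator of the integrand involves $\zeta(z_a/X)$ against the tautological variables, not against the other $z$'s. The only poles genuinely encountered are the simple poles $z_a=z_{a-1}$ from the explicit denominator $\prod(1-z_{a-1}/z_a)$, and those produce exactly the chain decomposition you describe.
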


\subsection{} As for the correspondence $\fW_\bk$ of Subsection \ref{sub:new}, one has the tautological rank $k_i$ vector bundle $\CL_i$, $\forall i \in \{1,...,n\}$. Let us formally write $\CL_i = x_{i1}+...+x_{ik_i}$. For any color-symmetric Laurent polynomial $M$ in $\bk = (k_1,...,k_n)$ variables, define:
\begin{equation}
\label{eqn:M}
M(...,\CL_i,...) := M(..., x_{i1},...,x_{ik_i},...) \in K_{T}(\fW_\bk)
\end{equation}
Then the analogoue of Proposition \ref{prop:pushsyt} is the following result. \\

\begin{proposition}
\label{prop:pushsmooth}	

For any color-symmetric Laurent polynomial $M$ as in \eqref{eqn:M} with coefficients pulled back from $K_{T}(\CM_{\bd^\mp})$, we have:
\begin{multline}
\pi^\pm_*\Big( M(...,\CL_i,...) [\fW^\pm_\bk] \Big) = \frac 1{\bk!} \int^\pm M(...,z_{ia},...) \\ \frac {\prod^{1 \leq i \leq n}_{1 \leq a \neq b \leq k} \left( 1 - \frac {z_{ib}}{z_{ia}} \right)}{\prod^{1 \leq i \leq n}_{1 \leq a,b \leq k} \left( 1 - \frac {z_{i-1,b}}{z_{ia}} \right)} \prod^{1 \leq i \leq n}_{1 \leq a \leq k_i} \left[ \overline{\zeta \left( \frac {z^{\pm 1}_{ia}}{X^{\pm 1}} \right)} \tau_\pm(z_{ia}) \right] ^{\pm 1} Dz_{ia} \label{eqn:pushsmooth}
\end{multline}
where the adjusted virtual class $[\fW^\pm_\bk]$  was defined in equations \eqref{eqn:adjust5}--\eqref{eqn:adjust6}. \\

\end{proposition}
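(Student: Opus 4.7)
The plan is to prove \eqref{eqn:pushsmooth} by computing both sides via equivariant localization and matching term by term. I will treat the $\pm = +$ case, since the $\pm = -$ case is analogous. First I would restrict the left-hand side to an arbitrary fixed point $\bla \in \CM_{\bd^+}^T$. By the virtual localization formula \eqref{eqn:renormalization vir} applied to the quotient presentation \eqref{eqn:quiverw}, together with the description \eqref{eqn:strip} of $\fW_\bk^T$, this yields a sum over $n$--tuples $\bmu \leq \bla$ such that $\blamu$ decomposes as a disjoint union of pairwise non-adjacent vertical strips of total type $\bk$. The contribution of each such $(\bla,\bmu)$ is determined by $M(\CL)|_{(\bla,\bmu)}$, by the adjustment prefactor \eqref{eqn:adjust5}, and by the exterior class of the dual virtual tangent bundle computed from Proposition \ref{prop:tangentw}.

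Next I would evaluate the right-hand side by iterated residues. Thanks to the normal-ordering conventions of Definition \ref{def:normal}, the denominator factors $(1 - z_{i-1,b}/z_{ia})$ never cause pole collisions across variables, because the contours sit on circles whose radii differ by powers of $\oq$. Consequently the only poles that need to be collected are those of the vertex operator factor $\overline{\zeta(z_{ia}/X)} \tau_+(z_{ia})|_\bla$, which by Proposition \ref{prop:restrictions} are simple poles at $z_{ia} = \chi_\sq$ for $\sq$ an outer corner of $\bla$ of color $i$. The numerator $\prod_{a \neq b}(1 - z_{ib}/z_{ia})$ then enforces that all variables of a single color localize to distinct corners, while the denominator $\prod_{a,b}(1 - z_{i-1,b}/z_{ia})$ produces a vanishing factor precisely when a removed color-$i$ corner sits immediately above a removed color-$(i-1)$ corner. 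Hence the surviving configurations are exactly those whose removal produces a $\bmu$ with $\blamu$ a union of non-adjacent vertical strips, and the $1/\bk!$ absorbs the symmetrization over orderings within each color.

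The final step, and the crux of the argument, is to check that the residue at each surviving configuration equals the corresponding localization contribution. Here one substitutes the explicit formula \eqref{eqn:Gamma+} for the vertex factors and evaluates the kernel at the corner weights $\chi_\sq$, then matches the resulting expression against $[\wedge^\bullet T^\vee \fW_\bk]^{-1}$ computed from \eqref{eqn:form5}, taking the prefactor \eqref{eqn:adjust5} into account. The main obstacle will be the bookkeeping here: the cross-terms between distinct strips produced by the kernel must reproduce the mixed tangent contributions involving different $\CL_i$'s and $\CL_{i-1}$'s in Proposition \ref{prop:tangentw}. Because interactions between strips are local, confined to boxes of adjacent colors sharing a column, this reduces to a single-strip and two-strip verification, entirely parallel to the analysis underlying Proposition \ref{prop:pushsyt} whose proof is deferred to the appendix.
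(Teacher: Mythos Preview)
Your residue analysis of the right-hand side has a genuine gap. You claim that the denominator poles $z_{ia} = z_{i-1,b}$ ``never cause pole collisions'' because of the $\oq^{-2i/n}$ offsets in the radii, and that consequently every variable localizes independently to an outer corner of $\bla$. But an outer corner of $\bla$ of a given color is by definition a removable box, so no two such corners can sit in the same column. Your ``surviving configurations'' are therefore exactly the skew shapes $\blamu$ consisting of isolated single boxes. This misses every fixed point of $\fW_\bk$ in which $\blamu$ contains a vertical strip of length $\geq 2$: in such a strip only the top box is an outer corner of $\bla$, so the lower boxes cannot arise from your procedure. Concretely, take $n=2$, $\bk = (1,1)$, and $\blamu$ a single vertical domino; your count produces no term matching it.

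The missing mechanism is precisely the pole $z_{ia} = z_{i-1,b}$. In the paper's proof one moves the contours apart and is forced to collect these residues, which bind variables of consecutive colors into groups $B_s$ sharing a common value $y_s$; each $B_s$ then plays the role of a vertical strip and $y_s$ localizes to the weight of its top box. This produces a sum over ordered decompositions $\{z_{ia}\} = B_1 \sqcup \dots \sqcup B_t$ (formula \eqref{eqn:oier}), weighted by a combinatorial factor $\#_{B_1,\dots,B_t}$, and matching with the localization expression \eqref{eqn:mish} requires the nontrivial identity \eqref{eqn:equality}. Your outline does not see this structure at all. Also note that your remark that the denominator ``produces a vanishing factor'' has the sign backwards: a factor of $1 - z_{i-1,b}/z_{ia}$ in the denominator gives a pole, not a zero, when the two weights coincide.
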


\begin{proposition}
\label{prop:pushsmooth change}	

Formula \eqref{eqn:pushsmooth} implies the following:
\begin{multline}
\pi^\pm_*\Big( M(...,\CL_i,...) [\fW^\pm_\bk] \Big) = \sum^{\text{decompositions}}_{\text{as in \eqref{eqn:turner}}} \frac 1{\#_{B_1,...,B_t}} \int_{y_1 \prec ... \prec y_t \prec \{0,\infty\}} M(...,z,...) (\pm 1)^{|\bk|-t} \\   \prod_{s,s'=1}^t \prod_{z' \in B_{s'}}^{z\in B_s} \left(1 - \frac {z}{z'} \right)^{\delta^{\col z}_{\col z'} - \delta^{\col z}_{\col z'-1}} \prod_{1\leq s \leq t}^{z\in B_s} \left[ \overline{\zeta \left( \frac {z^{\pm 1}}{X^{\pm 1}} \right)} \tau_\pm(z) \right]^{\pm 1} \Big|^{z \mapsto y_s}_{\text{if }z \in B_s} \prod_{s=1}^t Dy_s \qquad \label{eqn:contours smooth}
\end{multline}
where the number $\#_{B_1,...,B_t}$ is defined in \eqref{eqn:diez}. Each summand in \eqref{eqn:contours smooth} corresponds to a way to partition the set of variables $\{z_{ia}\}^{1\leq i \leq n}_{1\leq a \leq k_i}$ into sets $B_s$ of variables of consecutive colors, then specialize all the $z_{ia}$ from a given set $B_s$ to the same value $y_s$, and finally evaluate the corresponding integral when $y_1,...,y_t$ are far apart. \\
	
\end{proposition}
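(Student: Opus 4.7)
The plan is to deform the integration contours in \eqref{eqn:pushsmooth}, starting from the normal-ordered prescription of Definition \ref{def:normal} and arriving at the far-apart concentric contours $y_1 \prec \cdots \prec y_t \prec \{0,\infty\}$ of \eqref{eqn:contours smooth}. I will present the argument for $\pm = +$, since the $\pm = -$ case is entirely analogous. The key observation is that in the integrand of \eqref{eqn:pushsmooth}, the only poles whose location couples two distinct integration variables come from the denominator factor $\prod_{i,a,b}(1 - z_{i-1,b}/z_{ia})$; all poles of the single-variable factors $\overline{\zeta(z_{ia}^{\pm 1}/X^{\pm 1})}$ and $\tau_\pm(z_{ia})$ involve only the variable $z_{ia}$ together with the equivariant parameters. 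The numerator $\prod_{i,a \neq b}(1 - z_{ib}/z_{ia})$ vanishes on the diagonal and therefore automatically kills any residue forcing two variables of the same color to coincide.

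First I would unpack $\int^+$ as the sum over the $2^{|\bk|}$ choices of the function $\sigma$ of Definition \ref{def:normal}, and then sweep the small and large circles together into a common family of generic concentric contours, invoking the residue theorem \eqref{eqn:residue theorem} every time a coupled pole is crossed. By the observation above, the only nonzero residues are at $z_{ia} = z_{i-1,b}$, identifying variables of consecutive colors. Iterating this process organizes the $|\bk|$ integration variables into clusters $B_s$ of consecutive and pairwise distinct colors, in which all variables acquire a common value $y_s$ and each cluster is then integrated over its own generic contour. This is exactly the index set of decompositions appearing in \eqref{eqn:contours smooth}. The exponent $\delta_{\col z'}^{\col z} - \delta_{\col z' - 1}^{\col z}$ of the residual factor $(1-z/z')$ is produced from the $+1$ contribution of the surviving numerator $\prod(1-z_{ib}/z_{ia})$ and the $-1$ contribution of the unconsumed denominator $\prod(1-z_{i-1,b}/z_{ia})^{-1}$ after specialisation.

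The remaining task is to verify the combinatorial prefactors. The $1/\bk!$ on the left of \eqref{eqn:pushsmooth} symmetrises over permutations of same-color variables; once the variables have been clustered, these permutations decompose into relabellings of equal clusters together with reorderings of the residue choices within a single cluster, which jointly produce the stabilizer denominator $\#_{B_1,\ldots,B_t}$ of \eqref{eqn:diez}. The sign $(\pm 1)^{|\bk|-t}$ records the orientation prescribed by \eqref{eqn:residue theorem} applied at each of the $|\bk|-t$ merges required to reduce $|\bk|$ variables to $t$ clusters. The main obstacle will be verifying that no spurious residues are picked up from the single-variable factors $\overline{\zeta}$ and $\tau_+$ as the contours are swept into a generic concentric configuration, and that the sum over $\sigma$ together with the orientation signs reassembles cleanly without double-counting; this cancellation is precisely what the normal-ordered prescription of Definition \ref{def:normal} was designed to guarantee, and checking it is the only non-routine step.
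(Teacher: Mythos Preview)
Your approach is essentially the same as the paper's: both arguments deform the contours in the normal-ordered integral \eqref{eqn:pushsmooth}, observe that the only coupled poles come from the denominator factors $1 - z_{i-1,b}/z_{ia}$, and use the numerator zeros $1-z_{ib}/z_{ia}$ to rule out same-color collisions, thereby clustering the variables into strips of consecutive colors.

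The one place your sketch is underspecified, and where the paper does real work, is the derivation of the prefactor $\#_{B_1,\ldots,B_t}$. You describe it as a ``stabilizer denominator'' produced by ``relabellings of equal clusters together with reorderings of the residue choices within a single cluster'', but that language does not pin down the sequential product in \eqref{eqn:diez}. In the paper the factor is produced by a specific iterative algorithm: one first picks an arbitrary \emph{starting variable} $z_{ia}$ and moves its contour toward $\{0,\infty\}$, picking up the residues $z_{ia}=z_{i-1,b}$, $z_{i-1,b}=z_{i-2,c}$, \ldots\ to grow the first strip downward in color; then one repeats with the remaining variables. The point is that the second index of each starting variable is fixed by the choice made in the algorithm, while all other second indices are freely permuted by the $\bk!$ symmetrisation. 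Counting the choices for the starting variables sequentially yields exactly the product in \eqref{eqn:diez}, which is not a symmetric stabilizer order but rather a product of the number of color-$(j_a{-}1)$ variables still available at step $a$. Your proof would need to make this bookkeeping explicit to actually match \eqref{eqn:diez}. Apart from this, your handling of the sign $(\pm 1)^{|\bk|-t}$ and of the single-variable poles is correct.
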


\begin{proof} {\bf of Theorem \ref{thm:geom}:} We will prove the required statement for the shuffle element $S_m^\pm$, since the other cases are analogous. By \eqref{eqn:opop1}, we have:
\begin{equation}
\label{eqn:str}
s_m^\pm \cdot \of \ = \ p^\pm_*\Big( m(\CL_i,...,\CL_{j-1}) \cdot [\fZ^\pm_{[i;j)}] \cdot f(X_\mp) \Big)
\end{equation}
where we use the symbols $X_+$ and $X_-$ as placeholders for Chern classes of tautological bundles, pulled-back to $\fZ_{[i;j)}$ from $\CM_{\bd^+}$ and $\CM_{\bd^-}$, respectively. We have:
$$
X_+|_{\fZ_{[i;j)}} = X_-|_{\fZ_{[i;j)}} + \CL_i + ... + \CL_{j-1}
$$
in the sense of the plethysm \eqref{eqn:pleth1}. Therefore, \eqref{eqn:str} becomes:
$$
s^\pm_m \cdot \of = p^\pm_*\Big( m(\CL_i,...,\CL_{j-1}) \cdot [\fZ^\pm_{[i;j)}] \cdot f(X_\pm \mp(\CL_i + ... + \CL_{j-1})) \Big)
$$
Because the variables $X_\pm$ are pulled-back under $p^\pm$, they pass unhindered through the push-forward $p^\pm_*$. Then we may use Proposition \ref{prop:pushsyt} to obtain:
$$
s^\pm_m \cdot \of  = \int^\pm \frac {m(z_i,...,z_{j-1}) \cdot \overline{f(X \mp z_i \mp ... \mp z_{j-1}) \prod_{a=i}^{j-1} \Big[ \zeta \left( \frac {z_a^{\pm 1}}{X^{\pm 1}} \right)} \tau_\pm(z_a)  \Big]^{\pm 1} Dz_a }{q^{(j-i)\delta_\pm^-} \prod_{a=i+1}^{j-1} \left(1 - \frac {z_a}{z_{a-1}q^2}\right)\prod_{i \leq a < b < j} \zeta \left( \frac {z_a}{z_b} \right)} 
$$
for all $f \in \Lambda$. Comparing the above formula with the shuffle elements \eqref{eqn:s} and Theorem \ref{thm:act}, we obtain the following equality of operators on $K$:
\begin{equation}
\label{eqn:S}
s_m^\pm = S_{m}^\pm
\end{equation}
The analogous computations for $\ofZ_{[i;j)}$ and $\fW_\bk$ gives us $t_m^\pm = T_m^\pm$ and $g_{\pm \bk} = G_{\pm \bk}$. \\
\end{proof}

\subsection{}\label{sub:ext}

In the remainder of this Section, we will study the analogue of the Carlsson-Okounkov vector bundle (\cite{CO}). In the setup of affine Laumon spaces, it was defined in \cite{FFNR} as the following rank $|\bd|+|\bd'|$ vector bundle $\CE$ on $\CM_{\bd} \times \CM_{\bd'}$, for any pair of degree vectors $\bd, \bd' \in \nn$:
\begin{equation}
\label{eqn:defe}
\CE|_{\CF_\bullet, \CF'_\bullet} = \textrm{Ext}^1(\CF'_\bullet,\CF_\bullet(-\infty))
\end{equation}
The notion of Ext space of flags of sheaves is defined in \cite{FFNR}, where it is explained why the twist by $\infty$ forces the corresponding $\text{Ext}^0$ and $\text{Ext}^2$ spaces to vanish. The following result is proved in \loccitt: \\

\begin{proposition} 
\label{prop:section}
	
The vector bundle $\CE$ has a section $s$ which vanishes on:
\begin{equation}
\label{eqn:locus}
\fC = \Big\{ (\CF_\bullet \supset \CF'_\bullet) \Big\} \subset \CM_\bd \times \CM_{\bd'}
\end{equation}
set-theoretically, but not necessarily scheme-theoretically. \\
\end{proposition}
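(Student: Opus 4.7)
The plan is to adapt the Carlsson--Okounkov construction of \cite{CO} to the setting of parabolic sheaves. Over $\CM_\bd \times \CM_{\bd'}$, let $\CF_\bullet$ and $\CF'_\bullet$ denote the two universal parabolic sheaves pulled back from the two factors. The short exact sequence
$$
0 \longrightarrow \CF_\bullet(-\infty) \longrightarrow \CF_\bullet \longrightarrow \CF_\bullet|_\infty \longrightarrow 0
$$
yields, after applying $\RHom(\CF'_\bullet, -)$, a long exact sequence of Ext groups whose relevant connecting map reads
$$
\Hom(\CF'_\bullet, \CF_\bullet|_\infty) \xrightarrow{\ \delta\ } \Ext^1(\CF'_\bullet, \CF_\bullet(-\infty)) = \CE .
$$
Using the framings, $\CF_\bullet|_\infty$ and $\CF'_\bullet|_\infty$ are identified with the same standard flag of $\CO_\infty$-modules, so the composition $\CF'_\bullet \twoheadrightarrow \CF'_\bullet|_\infty = \CF_\bullet|_\infty$ supplies a canonical global element $\iota$, and I would set $s := \delta(\iota)$.

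Next, I would verify that $s^{-1}(0) = \fC$ set-theoretically. At a closed point $(\CF_\bullet, \CF'_\bullet)$, exactness forces $s = 0$ precisely when $\iota$ lifts to some $\ph \in \Hom(\CF'_\bullet, \CF_\bullet)$ whose restriction to $\infty$ is the identity of the common framing. Any such $\ph$ is automatically injective: its kernel would be a torsion-free subsheaf of $\CF'_\bullet$ vanishing at $\infty$, and since $\CF'_\bullet$ is torsion-free and agrees with its framing on a divisor of positive self-intersection, this kernel must be zero. Hence $\ph$ realises $\CF'_\bullet$ as a parabolic subsheaf of $\CF_\bullet$, i.e.\ a point of $\fC$. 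The converse implication is automatic: any framing-compatible inclusion $\CF'_\bullet \subset \CF_\bullet$ provides the desired lift.

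The main obstacle I anticipate is globalisation, namely checking that this fibrewise recipe really defines an honest morphism $s : \CO \to \CE$ of $\CO$-modules on $\CM_\bd \times \CM_{\bd'}$. Two points need attention. First, the element $\iota$ must deform in families; this follows because the framing data is part of the moduli functor, so the canonical morphism $\CF'_\bullet \to \CF_\bullet|_\infty$ is defined universally over the product. Second, the connecting map $\delta$ must be a morphism of coherent sheaves, which I would obtain by working with $R\Hom$ of the universal families in the derived category on the product and extracting $\delta$ from the induced distinguished triangle. The vanishing of $\Ext^0$ and $\Ext^2$ recalled just before the proposition guarantees that $\CE$ is a locally free sheaf of the claimed rank $|\bd|+|\bd'|$, so no higher-cohomology corrections can contaminate the construction.

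Finally, I would emphasise that one should \emph{not} expect the scheme-theoretic zero locus of $s$ to coincide with $\fC$ — the universal map $\delta$ typically vanishes to higher order along components where the Ext spaces jump, and scheme-theoretic control requires a finer argument that we do not need here. The statement asserts only set-theoretic equality with $\fC$, which is exactly what the injectivity argument of the second paragraph delivers.
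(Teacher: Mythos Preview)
The paper does not supply its own proof of this proposition: it simply states that the result ``is proved in \emph{loc.\ cit.}'', referring to \cite{FFNR}. So there is no in-paper argument to compare against. Your proposal is therefore not a re-derivation of the paper's proof but rather a sketch of the underlying argument that \cite{FFNR} (and ultimately \cite{CO}) provides.

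That said, your outline is correct and is exactly the standard Carlsson--Okounkov mechanism: the section is the image under the connecting homomorphism of the tautological framing map, and its vanishing at a point is equivalent to the existence of a framing-compatible morphism $\CF'_\bullet \to \CF_\bullet$, which is then shown to be an inclusion. One small point worth tightening is the injectivity step: rather than arguing directly about the kernel, it is cleaner to observe that $\ph$ is an isomorphism in a neighborhood of $\infty$, hence generically an isomorphism of rank-$n$ sheaves, so $\ker\ph$ is a torsion-free sheaf of generic rank zero and therefore vanishes. Your phrasing (``vanishing at $\infty$'' plus ``positive self-intersection'') gestures at this but is not quite the argument. Otherwise the globalisation remarks and the caveat about scheme-theoretic versus set-theoretic vanishing are appropriate and accurate.
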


\begin{remark}
\label{rem:section}
	
When $\bd' = \bd$, the locus $\fC$ is the diagonal. \\
	
\end{remark}

\begin{remark}
\label{rem:hecke}
	
When $\bd' =\bd+ \bs^i$ for some $1 \leq i \leq n$, the locus $\fC$ coincides with $\fZ_i$. We conclude that $\fC$ is smooth and middle-dimensional inside $\CM_\bd \times \CM_{\bd +\bs^i}$. \\
	
\end{remark}

\begin{remark}
\label{rem:vanishing}
	
If $\bd' - \bd \notin \nn$, the locus $\fC$ is empty, and hence the vector bundle $\CE$ has a nowhere vanishing section. This implies the exterior power of $\CE$ has class 0:
\begin{equation}
\label{eqn:vanishing}
[\wedge^\bullet (\CE^\vee)] := \sum_{k=0}^{\text{rank }\CE} (-1)^k [\wedge^ k \CE^\vee] = 0 \in K_\bd \otimes K_{\bd'}
\end{equation}
	
\end{remark}

\noindent In general, we need to compute the $K-$theory class of the vector bundle $\CE$ inside the ring $K_\bd \otimes K_{\bd'}$. We will do so in terms of the tautological vector bundles $\CV_k$ and $\CV_k'$ pulled-back from the two factors, and the formula we obtain is:
\begin{equation}
\label{eqn:classe}
[\CE] = \left(1 - \frac 1{q^2}\right) \sum_{k=1}^{n} \left(\frac {\CV_k}{\CV'_{k-1}} - \frac {\CV_k}{\CV_k'}\right) + \sum_{k=1}^{n} \left( \frac {\CV_k}{u_k^2} + \frac {u_{k+1}^2}{\CV'_kq^2} \right)
\end{equation}
Together with \eqref{eqn:tangentspace}, the above formula may be rewritten as:
\begin{equation}
\label{eqn:aret}
[T\CM_{\bd'}] - [\CE] = \left(1 - \frac 1{q^2} \right) \sum_{k=1}^{n} \left(\frac 1{\CV_{k-1}'} - \frac 1{\CV_k'} \right) (\CV'_k - \CV_k) + \sum_{k=1}^{n} \frac {\CV'_k - \CV_k}{u_k^2} \qquad
\end{equation}
If we restrict the formula above to the diagonal (i.e. $\CV_k = \CV_k'$), then we obtain 0, as one would expect from Remark \ref{rem:section}. If we restrict it to the simple correspondence $\fZ_i \subset \CM_{\bd} \times \CM_{\bd'}$ (where we have $\CV_k' = \CV_k + \delta_i^k \cdot \CL_i$), then the right-hand side of \eqref{eqn:aret} matches \eqref{eqn:form1}, as one would expect from Remark \ref{rem:hecke}. \\

\subsection{} \label{sub:compute}

As a consequence of \eqref{eqn:classe}, we infer that the exterior class of $\CE^\vee$ is given by:
$$
\left [ \wedge^\bullet (\CE^\vee) \right] = \prod_{k=1}^{n} \frac {\left(1-\frac {\CV_k' q^2}{\CV_k}\right)\left(1-\frac {\CV'_{k-1}}{\CV_k}\right)}{\left(1-\frac {\CV_k'}{\CV_k}\right)\left(1-\frac {\CV'_{k-1}q^2}{\CV_k}\right)} \left(1 -  \frac {u_k^2}{\CV_k} \right) \left(1 - \frac {\CV'_kq^2}{u_{k+1}^2} \right) 
$$
We will work with the following adjustment of the above exterior class:
\begin{equation}
\label{eqn:adjust}
[\widetilde{\wedge}^\bullet (\CE^\vee)] := [\wedge^\bullet (\CE^\vee)] \cdot (-1)^{|\bd'| - |\bd|} u_{\bd' - \bd} q^{\langle \bd', \bd' - \bd \rangle} \prod_{i=1}^n \frac {\det \CV_i}{\det \CV_i'}
\end{equation}
on $\CM_\bd \times \CM_{\bd'}$. Let $p_1$ and $p_2$ be the projections from $\CM_\bd \times \CM_{\bd'}$ to the two factors, and define the operators:
\begin{equation}
\label{eqn:op}
a_{\bd, \bd'} : K_{\bd'} \longrightarrow K_{\bd}, \qquad \alpha \mapsto p_{2*} \Big( [\widetilde{\wedge}^\bullet (\CE^\vee)] \cdot p_1^*(\alpha) \Big)
\end{equation}
According to \eqref{eqn:vanishing}, $a_{\bd, \bd'} = 0$ unless $\bd' - \bd \in \nn$. Therefore, we will write: 
$$
a_{\bk} = \bigoplus_{\bd \in \nn} a_{\bd, \bd + \bk}, \qquad a = \bigoplus_{\bk \in \nn} a_{\bk}
$$ 
as an endomorphism of $K$, and prove the following: \\

\begin{proposition}
\label{prop:a}

The operator $K \stackrel{a_{\bk}}\longrightarrow K$ coincides with the action of the element:
\begin{equation}
\label{eqn:one}
(1 - q^{-2})^{-|\bk|} \in \CA_{-\bk} \curvearrowright K
\end{equation}
Above, $(1 - q^{-2})^{-|\bk|}$ is a constant rational function in $(k_1,...,k_n)$ variables, which is an element of $\CA^-$ as in Definition \ref{def:shuf}, and the action $\curvearrowright$ is the one of Theorem \ref{thm:act}. \\

\end{proposition}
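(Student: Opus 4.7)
The plan is to prove the identity of operators by showing they have the same matrix coefficients in the renormalized torus-fixed-point basis $\{|\bla\rangle\}$ of Proposition \ref{prop:basis}. On the shuffle side, Proposition \ref{prop:coeffs} applied to the constant element $R^- = (1-q^{-2})^{-|\bk|} \in \CA_{-\bk}$ immediately yields
$$
\langle \bmu | (1-q^{-2})^{-|\bk|} | \bla \rangle = \prod_{\bsq \in \blamu} \zeta\!\left(\frac{\chi_\bla}{\chi_\bsq}\right)^{-1}\tau_-(\chi_\bsq)^{-1}
$$
whenever $\bmu \subset \bla$ with $|\blamu| = \bk$, and zero otherwise. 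The problem thus reduces to computing the same matrix coefficients of $a_\bk$ on the geometric side.

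A standard equivariant $K$--theoretic localization on $\CM_\bd \times \CM_{\bd+\bk}$, combined with the projection formula for $p_2$ via localization on the non-proper fibers, produces
$$
\langle \bmu | a_\bk | \bla \rangle = \frac{[\widetilde{\wedge}^\bullet(\CE^\vee)]|_{(\bmu,\bla)}}{[\wedge^\bullet(T^\vee\CM_{\bd+\bk})]|_\bla}.
$$
When $\bmu \not\subset \bla$, the weights of $[\CE]|_{(\bmu,\bla)}$ include a trivial character, causing $[\wedge^\bullet(\CE^\vee)]$ to vanish (mirroring the section argument of Proposition \ref{prop:section}). When $\bmu \subset \bla$, formula \eqref{eqn:aret} restricted to $(\bmu,\bla)$ using $\CV'_k|_\bla - \CV_k|_\bmu = \sum_{\bsq \in \blamu,\,\col\bsq \equiv k}\chi_\bsq$ factorizes additively over boxes of $\blamu$:
$$
[T\CM_{\bd+\bk}]|_\bla - [\CE]|_{(\bmu,\bla)} = \sum_{\bsq \in \blamu} C(\bsq,\bla), \quad C(\bsq,\bla) := (1-q^{-2})\chi_\bsq\!\left(\frac{1}{\CV'_{\col\bsq-1}|_\bla}-\frac{1}{\CV'_{\col\bsq}|_\bla}\right) + \frac{\chi_\bsq}{u_{\col\bsq}^2}.
$$
Since $\wedge^\bullet$ is a multiplicative homomorphism, applying it turns this into a product over boxes. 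Moreover, the adjustment factor in \eqref{eqn:adjust}, restricted to $(\bmu,\bla)$, factors as $\prod_{\bsq \in \blamu}\bigl(-u_{\col\bsq}\,q^{\la_{\col\bsq}-\la_{\col\bsq-1}}/\chi_\bsq\bigr)$, where $\la_i$ denotes the number of boxes of $\bla$ of color $\equiv i\bmod n$; this uses that $u_\bk = \prod_{\bsq} u_{\col\bsq}$, $\prod_i \det\CV_i|_\bmu/\det\CV_i'|_\bla = \prod_{\bsq}\chi_\bsq^{-1}$, and $\langle \bd+\bk,\bk\rangle = \sum_{\bsq}(\la_{\col\bsq}-\la_{\col\bsq-1})$.

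Combining these factorizations reduces the claim to the single-box identity
$$
-\frac{u_{\col\bsq}\,q^{\la_{\col\bsq}-\la_{\col\bsq-1}}}{\chi_\bsq\cdot[\wedge^\bullet(C(\bsq,\bla)^\vee)]} = \zeta\!\left(\frac{\chi_\bla}{\chi_\bsq}\right)^{-1}\tau_-(\chi_\bsq)^{-1}
$$
for each $\bsq \in \blamu$. The right-hand side is computed by formula \eqref{eqn:Gamma-} of Proposition \ref{prop:restrictions} with $z=\chi_\bsq$: it equals an explicit ratio of products over the inner and outer corners of $\bla$, where the inner-corner set includes the framing corners coming from the initial boxes $(0,0) \in \la^k$ of empty constituent partitions. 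The left-hand side, once $[\wedge^\bullet(C(\bsq,\bla)^\vee)]$ is expanded as a product of $(1-\alpha^{-1})$ factors over all weights of $\CV'_{\col\bsq-1}|_\bla$ and $\CV'_{\col\bsq}|_\bla$, simplifies to the same corner expression by the same row-by-row telescoping argument used to derive \eqref{eqn:Gamma-}.

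The main obstacle is this single-box identity: a priori the left-hand side runs over every box of $\bla$ of the two relevant colors, whereas the right-hand side runs only over corners. Reconciling them demands the telescoping over rows of each $\la^k$ used in the proof of Proposition \ref{prop:restrictions}, together with the observation that the monomial prefactors $u_{\col\bsq}$ and $q^{\la_{\col\bsq}-\la_{\col\bsq-1}}$ -- which look ad hoc in \eqref{eqn:adjust} -- are precisely the boundary contributions produced by the framing corners in empty $\la^k$'s. The verification is tedious bookkeeping with powers of $q$, $\oq$, and $u_i$, but contains no surprises once the telescoping pattern is set up.
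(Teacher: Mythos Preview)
Your proposal is correct and follows the same strategy as the paper: compare matrix coefficients in the fixed-point basis, invoke \eqref{eqn:coeff-} for the shuffle side, and on the geometric side use equivariant localization together with formula \eqref{eqn:aret} to identify $\langle \bmu | a | \bla \rangle$ with $\prod_{\bsq \in \blamu} [\zeta(\chi_\bla/\chi_\bsq)\tau_-(\chi_\bsq)]^{-1}$. The paper's proof is extremely terse and simply asserts this last product formula as a consequence of \eqref{eqn:aret}; you have spelled out the box-wise additive factorization of $[T\CM_{\bd'}]-[\CE]$ and the matching multiplicative factorization of the adjustment \eqref{eqn:adjust}, which is exactly the content the paper suppresses.

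One remark: your final single-box identity actually holds \emph{directly}, without passing through the corner formula \eqref{eqn:Gamma-}. Writing $i=\col\bsq$ and expanding both sides as products over all boxes of $\bla$ of colors $i$ and $i-1$, one has
\[
[\wedge^\bullet(C(\bsq,\bla)^\vee)] = \Big(1-\tfrac{u_i^2}{\chi_\bsq}\Big)\prod_{\sq\in\bla}^{\col\sq=i}\frac{1-\chi_\sq q^2/\chi_\bsq}{1-\chi_\sq/\chi_\bsq}\prod_{\sq\in\bla}^{\col\sq=i-1}\frac{1-\chi_\sq/\chi_\bsq}{1-\chi_\sq q^2/\chi_\bsq},
\]
while the elementary rewriting $(\chi_\sq-\chi_\bsq)/(\chi_\sq q-\chi_\bsq q^{-1}) = q\,(1-\chi_\sq/\chi_\bsq)/(1-\chi_\sq q^2/\chi_\bsq)$ applied to the definitions \eqref{eqn:zeta2} and \eqref{eqn:deftau 2} gives the reciprocal of this, times exactly $-u_i q^{\la_i-\la_{i-1}}/\chi_\bsq$. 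So the two sides match box-by-box, and the row-by-row telescoping you invoke is not needed here; it is only required when one wants the \emph{corner} form \eqref{eqn:Gamma-}, which is a separate simplification. This makes the verification less tedious than you suggest.
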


\begin{proof} If $\bla \not \geq \bmu$, then Remark \eqref{eqn:vanishing} implies that $\langle \bmu | a | \bla \rangle = 0$. Otherwise:
$$
\langle \bmu | a | \bla \rangle = \frac {\left [ \widetilde{\wedge}^\bullet (\CE^\vee_{\bmu,\bla}) \right]}{[\wedge^\bullet(T^\vee_\bla\CM_{\bd'})]} = \frac {(-1)^{|\bd'| - |\bd|} u_{\bd' - \bd} q^{\langle \bd', \bd' - \bd \rangle} \prod_{i=1}^n \frac {\det \CV_i}{\det \CV_i'}}{\wedge^\bullet(T^\vee_{\bla} \CM_{\bd'}  - \CE^\vee_{\bmu,\bla})} 
$$
Using formula \eqref{eqn:aret} to compute the denominator, we conclude that:
$$
\langle \bmu | a | \bla \rangle = \prod_{\bsq \in \blamu} \left[ \zeta \left( \frac {\chi_\bla}{\chi_\bsq} \right) \tau_-(\chi_\bsq) \right]^{-1}
$$
Comparing the above with \eqref{eqn:coeff-} gives us the required equality of operators. 

\end{proof}

\section{Appendix}
\label{sec:app}

\begin{proof} {\bf of Proposition \ref{prop:tangent}:} The proof is mostly based on the following claim: \\
	
\begin{claim}
\label{claim:croissant}

Consider vector spaces $V^+_i \twoheadrightarrow V^-_i$ equipped with a flag of subspaces:
\begin{equation}
\label{eqn:paratu}
0 = U^0_i \subset U^1_i \subset ... \subset U^k_i = \text{Ker} \left( V^+_i \twoheadrightarrow V^-_i \right)
\end{equation}
for $i \in \{1,2\}$. The vector space $W_k$ of homomorphisms $V^+_1 \rightarrow V^+_2$ which preserve the flag \eqref{eqn:paratu} can be described as:
\begin{equation}
\label{eqn:plus}
[W_k] \ = \ \frac {V_2^+}{V_1^+} - \sum_{a=1}^k \frac {V_2^+}{L_1^a} + \sum_{1\leq a \leq b \leq k} \frac {L_2^a}{L_1^b} 
\end{equation}
where $L^a_i = U_i^a/U_i^{a-1}$. Formula \eqref{eqn:plus} holds in the Grothendieck group, i.e. when we identify $V$ with $V_1+V_2$ for any short exact sequence $0 \rightarrow V_1 \rightarrow V \rightarrow V_2 \rightarrow 0$. \\

\end{claim}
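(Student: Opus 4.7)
The plan is to choose a splitting of each filtration \eqref{eqn:paratu}---i.e.\ direct sum decompositions $V_i^+ = L_i^1 \oplus L_i^2 \oplus \cdots \oplus L_i^k \oplus L_i^{k+1}$, where I adopt the shorthand $L_i^{k+1} := V_i^-$, so that $U_i^a = L_i^1 \oplus \cdots \oplus L_i^a$. Since the desired identity lives in the Grothendieck group and the class of a filtered vector bundle coincides with that of its associated graded (by additivity of $K$--classes over short exact sequences), nothing is lost in choosing such splittings.

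With the splittings in place, any $\phi : V_1^+ \to V_2^+$ decomposes into a matrix of blocks $\phi_{ba} : L_1^a \to L_2^b$, for $a,b \in \{1,\ldots,k+1\}$. The condition $\phi(U_1^a) \subset U_2^a$ for all $a \in \{1,\ldots,k\}$ becomes the vanishing of $\phi_{ba'}$ whenever $a' \leq a < b$; as $a$ ranges over $\{1,\ldots,k\}$, this says precisely that $\phi_{ba'}$ must vanish whenever $a' \leq k$ and $b > a'$. Hence $W_k$ is canonically the direct sum of $\Hom(L_1^{a'}, L_2^b)$ over the \emph{allowed} pairs $(a',b)$: either $a' = k+1$, in which case $b$ is unconstrained, or $a' \leq k$ and $b \leq a'$.

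Translating this decomposition into Grothendieck classes gives
\begin{equation*}
[W_k] \ = \ \sum_{b=1}^{k+1} \frac{L_2^b}{L_1^{k+1}} \ + \ \sum_{1 \leq b \leq a \leq k} \frac{L_2^b}{L_1^a}.
\end{equation*}
The first sum equals $\frac{V_2^+}{V_1^-}$, because $\sum_{b=1}^{k+1} L_2^b = V_2^+$ in the Grothendieck group and $L_1^{k+1} = V_1^-$ by convention. Finally, the relation $V_1^- = V_1^+ - \sum_{a=1}^k L_1^a$ (read off from the short exact sequence $0 \to U_1^k \to V_1^+ \to V_1^- \to 0$ combined with the length-$k$ filtration of $U_1^k$) gives $\frac{V_2^+}{V_1^-} = \frac{V_2^+}{V_1^+} - \sum_{a=1}^k \frac{V_2^+}{L_1^a}$, matching the first two terms of \eqref{eqn:plus}; the remaining sum agrees with the third term after the relabeling $a \leftrightarrow b$.

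I anticipate no serious obstacle: the argument is essentially the observation that flag-preservation cuts out a block-triangular piece of $\Hom(V_1^+, V_2^+)$, combined with standard additivity of $K$--classes over filtrations. The only point worth flagging is the legitimacy of working with a splitting in families over the moduli spaces $\CM_{\bd^\pm}$, but since the identity is $K$--theoretic and any filtration of vector bundles has the same $K$--class as its associated graded, this is automatic and the local existence of splittings suffices.
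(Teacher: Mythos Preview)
Your argument is correct. The approach differs from the paper's: rather than choosing a splitting and reading off the block-triangular description directly, the paper proceeds inductively on the length of the flag. It rewrites \eqref{eqn:paratu} as the flag of quotients $V_i^+ = V_i^0 \twoheadrightarrow V_i^1 \twoheadrightarrow \cdots \twoheadrightarrow V_i^k = V_i^-$ with $L_i^a = \text{Ker}(V_i^{a-1} \twoheadrightarrow V_i^a)$, lets $W_a$ denote the space of maps preserving the first $a$ steps, and uses the exact sequence $0 \to W_a \to W_{a-1} \to \Hom(L_1^a, V_2^a) \to 0$ to obtain $[W_a] = [W_{a-1}] - \frac{V_2^+}{L_1^a} + \sum_{a'\leq a} \frac{L_2^{a'}}{L_1^a}$, then iterates. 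Your splitting argument is more direct and arguably more transparent; the paper's inductive argument avoids any choice of splitting and stays intrinsically at the level of short exact sequences, which is closer in spirit to how the claim is later applied (term by term) in the computations of $[Z_{[i;j)}]$, $[\fa_{[i;j)}]$, $[\fp_{[i;j)}]$. Either route is adequate here since the identity is purely $K$--theoretic.
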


\begin{proof} The flag \eqref{eqn:paratu} is equivalent to a flag of quotients:
$$
V_i^+ = V_i^0 \twoheadrightarrow V_i^{1} \twoheadrightarrow ... \twoheadrightarrow V_i^{k-1} \twoheadrightarrow V_i^k = V_i^-
$$
where $L_i^a = \text{Ker}(V_i^{a-1} \twoheadrightarrow V_i^{a})$. Consider the vector space $W_a$ of homomorphisms $V_1^+ \rightarrow V_2^+$ which preserve the truncated flags $V_i^+ = V_i^0 \twoheadrightarrow ... \twoheadrightarrow V_i^a$. We have:
\begin{equation}
\label{eqn:endo}
W_0 := \Hom \left( V_1^+,V_2^+ \right) = (V_1^+)^\vee \otimes V_2^+ = \frac {V_2^+}{V_1^+}
\end{equation}
where the last equality is simply a matter of notation for us. In general, we have:
$$
W_{a} = \text{Ker}\Big(W_{a-1} \rightarrow \Hom \left(L_1^a,V_2^{a} \right) \Big)
$$
and so we have the following equality in the Grothendieck group:
$$
W_{a} = W_{a-1} - \frac {V_2^a}{L_1^a} = W_{a-1} - \frac {V_2^+}{L_1^a} + \sum_{a' \leq a} \frac {L_2^{a'}}{L_1^a}
$$
Iterating the preceding relation for $a \in \{1,...,k\}$ gives us formula \eqref{eqn:plus}. \\
\end{proof}

\noindent Recall that the affine space $Z_{[i;j)}$ parametrizes quadruples of linear maps $\bX,\bY,\bA,\bB$ involving the $n-$tuple of vector spaces $\bV^+ = (V_1^+,...,V_{n}^+)$ and $\oplus_{k=1}^n \BC w_k$, which interact with the flag \eqref{eqn:funk} as in the two bullets of Subsection \ref{sub:fine}. Applying Claim \ref{claim:croissant}, we see that:
\begin{equation}
\label{eqn:buddha1}
\left[Z_{[i;j)}\right] = \left(\sum_{k=1}^{n} \frac {\CV_k^+}{\CV_k^+ q^2} - \sum_{a=i}^{j-1} \frac {\CV_a^+}{\CL_a q^2} + \sum_{i\leq a < b < j}^{a \equiv b} \frac {\CL_b}{\CL_a q^2} \right) + 
\end{equation}
$$
+ \left( \sum_{k=1}^{n} \frac {\CV_k^+}{\CV^+_{k-1}} - \sum_{a=i}^{j-1} \frac {\CV_{a+1}^+}{\CL_a} + \sum_{i\leq a < b < j}^{a+1 \equiv b} \frac {\CL_b}{\CL_a} \right) + \sum_{k=1}^{n} \frac {\CV_k^+}{u_k^2} + \left( \sum_{k=1}^{n} \frac {u_{k+1}^2}{\CV_k^+ q^2} - \sum_{a=i}^{j-1} \frac {u_{a+1}^2}{\CL_a q^2} \right) 
$$
where the 4 parentheses keep track of the contributions of the maps $\bX$, $\bY$, $\bA$, $\bB$, respectively. The codomain of the map $\eta$ and the Lie algebra of $P_{[i;j)}$ contribute:
\begin{align}
&[\fa_{[i;j)}] = \sum_{k=1}^{n} \frac {\CV_k^+}{\CV_{k-1}^+q^2} - \sum_{a=i}^{j-1} \frac {\CV_{a+1}^+}{\CL_a q^2} + \sum_{i\leq a < b - 1 < j - 1}^{b\equiv a+1} \frac {\CL_b}{\CL_a q^2}  \label{eqn:buddha2} \\
&[\fp_{[i;j)}] = \sum_{k=1}^{n} \frac {\CV_k^+}{\CV_k^+} - \sum_{a=i}^{j-1} \frac {\CV_a^+}{\CL_a} + \sum_{i\leq a \leq b < j}^{b\equiv a} \frac {\CL_b}{\CL_a} \label{eqn:buddha3}
\end{align}
By formula \eqref{eqn:vir fine}, the class of the (virtual) tangent space to $\fZ_{[i;j)}$ is:
$$
\Big(\text{RHS of \eqref{eqn:buddha1}}\Big) - \Big(\text{RHS of \eqref{eqn:buddha2}}\Big) - \Big(\text{RHS of \eqref{eqn:buddha3}}\Big)
$$
In the right-hand sides of \eqref{eqn:buddha1}, \eqref{eqn:buddha2}, \eqref{eqn:buddha3}, the summands which only involve $\CV_k^+$ contribute precisely the class of the tangent space to $\CM_{\bd^+}$, by \eqref{eqn:tangentspace}. The remaining summands precisely constitute the RHS of \eqref{eqn:form1}, as we needed to prove. \\

\noindent Recall that the affine space $\oZ_{[i;j)}$ parametrizes quadruples of linear maps $\bX,\bY,\bA,\bB$ involving the $n-$tuple of vector spaces $\bV^+ = (V_1^+,...,V_{n}^+)$ and $\oplus_{k=1}^n \BC w_k$, which interact with the flag \eqref{eqn:zeit} as in the two bullets of Subsection \ref{sub:eccentric}. Applying Claim \ref{claim:croissant}, we see that:
\begin{equation}
\label{eqn:bar1}
\left[\oZ_{[i;j)}\right] = \left(
\sum_{k=1}^{n} \frac {\CV_k^+}{\CV_k^+ q^2} - \sum_{a=i}^{j-1} \frac {\CV_a^+}{\CL_a q^2} + \sum_{i\leq a < b < j}^{a \equiv b} \frac {\CL_a}{\CL_b q^2} \right) + 
\end{equation}
$$
+ \left( \sum_{k=1}^{n} \frac {\CV_k^+}{\CV^+_{k-1}} - \sum_{a=i}^{j-1} \frac {\CV_{a+1}^+}{\CL_a} + \sum_{i\leq a \leq b + 1 < j + 1}^{a-1 \equiv b} \frac {\CL_a}{\CL_b} \right) + \sum_{k=1}^{n} \frac {\CV_k^+}{u_k^2} + \left( \sum_{k=1}^{n} \frac {u_{k+1}^2}{\CV_k^+ q^2} - \sum_{a=i}^{j-1} \frac {u_{a+1}^2}{\CL_a q^2} \right)
$$
where the 4 parentheses keep track of the contributions of the maps $\bX$, $\bY$, $\bA$, $\bB$, respectively. The codomain of the map $\oeta$ and the Lie algebra of $\oP_{[i;j)}$ contribute:
\begin{align}
&[\overline{\fa}_{[i;j)}] = \sum_{k=1}^{n} \frac {\CV_k^+}{\CV_{k-1}^+q^2} - \sum_{a=i}^{j-1} \frac {\CV_{a+1}^+}{\CL_a q^2} + \sum_{i\leq a < b < j}^{a-1 \equiv b} \frac {\CL_a}{\CL_b q^2}  \label{eqn:bar2} \\
&[\overline{\fp}_{[i;j)}] = \sum_{k=1}^{n} \frac {\CV_k^+}{\CV_k^+} - \sum_{a=i}^{j-1} \frac {\CV_a^+}{\CL_a} + \sum_{i\leq a \leq b < j}^{a \equiv b} \frac {\CL_a}{\CL_b} \label{eqn:bar3}
\end{align}
By formula \eqref{eqn:vir eccentric}, the class of the (virtual) tangent space to $\ofZ_{[i;j)}$ is:
$$
\Big(\text{RHS of \eqref{eqn:bar1}}\Big) - \Big(\text{RHS of \eqref{eqn:bar2}}\Big) - \Big(\text{RHS of \eqref{eqn:bar3}}\Big)
$$
In the right-hand sides of \eqref{eqn:bar1}, \eqref{eqn:bar2}, \eqref{eqn:bar3}, the summands which only involve $\CV_k^+$ contribute precisely the class of the tangent space to $\CM_{\bd^+}$, by \eqref{eqn:tangentspace}. The remaining summands precisely constitute the RHS of \eqref{eqn:form3}, as we needed to prove. \\
	
\end{proof}

\begin{proof} {\bf of Proposition \ref{prop:tangentw}:} We will use formula \eqref{eqn:vir smooth}, and to do so, we will need to compute the class of the vector space $W_{\bd^+,\bd^-}$ in the Grothendieck group. Claim \ref{claim:croissant} gives us:
$$
[W_{\bd^+,\bd^-}] = \sum_{i=1}^{n} \left( \frac {\CV^+_i}{\CV^+_iq^2} - \frac {\CV^+_i}{\CL_iq^2}  \right) + \sum_{i=1}^{n} \left( \frac {\CV^+_i}{\CV^+_{i-1}} -  \frac {\CV^+_i}{\CL_{i-1}} + \frac {\CL_i}{\CL_{i-1}} \right) + 
$$
\begin{equation}
\label{eqn:capra1}
+ \sum_{i=1}^{n}  \frac {\CV^+_i}{u_i^2}  + \sum_{i=1}^{n} \left( \frac {u_{i+1}^2}{\CV_i^+ q^2} - \frac {u_{i+1}^2}{\CL_i q^2} \right)
\end{equation}
The four sums in \eqref{eqn:capra1} correspond to the $\bX,\bY,\bA,\bB$ maps, respectively. Moreover:
\begin{align}
&[\fa] = \sum_{i=1}^{n} \left( \frac {\CV^+_i}{\CV^+_{i-1}q^2} -  \frac {\CV^+_i}{\CL_{i-1}q^2} \right) \label{eqn:capra2} \\
&[\fp] = \sum_{i=1}^{n} \left( \frac {\CV^+_i}{\CV^+_{i}} -  \frac {\CV^+_i}{\CL_{i}} + \frac {\CL_i}{\CL_{i}} \right) \label{eqn:capra3}
\end{align}
According to \eqref{eqn:vir smooth}, the difference:
$$
\Big(\text{RHS of \eqref{eqn:capra1}}\Big) - \Big(\text{RHS of \eqref{eqn:capra2}}\Big) - \Big(\text{RHS of \eqref{eqn:capra3}}\Big)
$$
gives us a formula for $[T\fW_{\bd^+,\bd^-}]$. In the right-hand sides of \eqref{eqn:capra1}, \eqref{eqn:capra2}, \eqref{eqn:capra3}, the terms that do not involve $\CL_i$ produce $[T\CM_{\bd^+}]$, according to formula \eqref{eqn:tangentspace}. The remaining terms are precisely the right-hand side of \eqref{eqn:form5}. Formula \eqref{eqn:form6} is proved similarly, so we leave it as an exercise to the interested reader. \\
\end{proof}

\begin{proof} {\bf of Propositions \ref{prop:pushsyt} and \ref{prop:pushsyt change}:} Let us first prove \eqref{eqn:pushsyt} when the sign is $\pm = +$. The virtual equivariant localization formula \eqref{eqn:eqloc vir} reads:
$$
\text{LHS of \eqref{eqn:pushsyt}}  = \sum_{\bla \vdash \bd^+} | \bla \rangle \sum^{\syt \text{ of upper}}_{\text{shape }\bla} \frac {M(\CL_i,...,\CL_{j-1})\cdot [\fZ_{[i;j)}^+]}{\wedge^\bullet(T^\vee \fZ_{[i;j)} - T^\vee \CM_{\bd^+})} \ \Big|_{\syt}
$$	
Recall from \eqref{eqn:tautline} that the restriction of $\CL_a$ to the fixed point corresponding to a SYT is just the weight $\chi_a$ of the box labeled $a$ in the SYT. We will use \eqref{eqn:adjust1} and \eqref{eqn:form1} to compute the adjusted virtual class in the numerator, respectively the exterior class of the normal bundle in the denominator. Therefore, $\text{LHS of \eqref{eqn:pushsyt}} |_\bla = $
$$
\sum^{\syt \text{ of upper}}_{\text{shape }\bla} M(\chi_i,...,\chi_{j-1}) u_{i+1}...u_j \frac {\chi_i}{\chi_{j-1}} (-1)^{j-i-1} \frac {q^{\left \lceil \frac {j-i}n \right \rceil - 2}}{q^{d_i - d_j}} \frac {(1-1)^{j-i}}{\prod_{a=i+1}^{j-1} \left(1- \frac {\chi_{a-1}q^2}{\chi_a}\right)}
$$
$$
\prod_{a < b}^{a \equiv b} \frac {1 - \frac {\chi_{a}}{\chi_{b}}}{1 - \frac {\chi_{a}q^2}{\chi_{b}}} \prod_{a < b}^{a \equiv b - 1} \frac {1 - \frac {\chi_{a}q^2}{\chi_{b}}}{1 - \frac {\chi_{a}}{\chi_{b}}} \prod_{a=i}^{j-1} \left[\left(1- \frac {\chi_aq^2}{u^2_{a+1}}\right) \prod^{\col \sq = a}_{\sq \in \bla} \frac {1 - \frac {\chi_aq^2}{\chi_\sq}}{1 - \frac {\chi_a}{\chi_\sq}} \prod^{\col \sq = a+1}_{\sq \in \bla} \frac {1 - \frac {\chi_a}{\chi_\sq}}{1 - \frac {\chi_a q^2}{\chi_\sq}} \right] 
$$
where $\bd = |\bla|$. The factor $(1-1)^{j-i}$ in the numerator of the first row is meant to cancel $j-i$ zeroes in the denominator of the second row. Explicitly, if we write a SYT in the form \eqref{eqn:sytflag}, then we have $\chi_\bla = \chi_{\bnu^a} + \chi_a + ... + \chi_{j-1}$, and so the formula above implies that $\text{LHS of \eqref{eqn:pushsyt}} |_\bla = $ 
$$
\sum^{\syt \text{ as in}}_{\eqref{eqn:sytflag}} M(\chi_i,...,\chi_{j-1}) u_{i+1}...u_j \frac {\chi_i}{\chi_{j-1}} (-1)^{j-i-1} \frac {q^{\left \lceil \frac {j-i}n \right \rceil - 2}}{q^{d_i - d_j}} \frac {\left(1 - q^2 \right)^{j-i}}{\prod_{a=i+1}^{j-1} \left(1- \frac {\chi_{a-1}q^2}{\chi_a}\right)}
$$
$$
\prod_{a=i}^{j-1} \left[\left(1- \frac {\chi_aq^2}{u^2_{a+1}}\right) \prod^{\col \sq = a}_{\sq \in \bnu^a} \frac {1 - \frac {\chi_aq^2}{\chi_\sq}}{1 - \frac {\chi_a}{\chi_\sq}} \prod^{\col \sq = a+1}_{\sq \in \bnu^a} \frac {1 - \frac {\chi_a}{\chi_\sq}}{1 - \frac {\chi_a q^2}{\chi_\sq}} \right] 
$$
Absorbing all the $u_a$'s, $\chi_a$'s and $q$'s into the second row, we obtain the formula:
\begin{equation}
\label{eqn:donald}
\text{LHS of \eqref{eqn:pushsyt}} \Big|_\bla = 
\end{equation}
$$
= \sum^{\syt \text{ as in}}_{\eqref{eqn:sytflag}} \frac {M(\chi_i,...,\chi_{j-1}) (q^{-1} - q)^{j-i} \prod_{a=i}^{j-1} \zeta\left(\frac {\chi_a}{\chi_{\bnu^a}} \right) \tau_+(\chi_a)}{\prod_{a=i+1}^{j-1} \left(1- \frac {\chi_a}{\chi_{a-1}q^2}\right)} 
$$
Recalling the notion $\int^+$ from Definition \ref{def:normal}, we note that the $\text{RHS of \eqref{eqn:pushsyt}} |_\bla = $
$$
\sum^{\text{functions}}_{\sigma : \{i,...,j-1\} \rightarrow \{1,-1\}}  \int^{|q|^{\pm 1}, |\oq|^{\pm 1} < 1}_{|z_{a}| = \gamma^{\sigma(a)}  \oq^{-\frac {2a}n}} \frac {M(z_i,...,z_{j-1}) \prod_{a=i}^{j-1} \zeta \left( \frac {z_a}{\chi_\bla} \right) \tau_+(z_a) \sigma(a) Dz_a}{\prod_{a=i+1}^{j-1} \left(1 - \frac {z_a}{z_{a-1}q^2}\right)\prod_{i \leq a < b < j} \zeta \left( \frac {z_a}{z_b} \right)}
$$
Each variable $z_a$ is integrated over the difference of two circles, one very small and one very large, which separate the set $S = \{0,\infty\}$ from the finite poles of the integral, by which we mean all the poles that arise from the functions $\zeta$ and $\tau_+$. The only poles that involve two variables $z_a$ and $z_b$ with $a<b$ are of the form:
\begin{align*}
&z_a - z_b \oq^{\frac {2(b-a-1)}n} \quad \text{ for } a \equiv b-1 \\ 
&z_a - z_bq^{-2} \oq^{\frac {2(b-a)}n} \ \text{ for } a \equiv b
\end{align*}
Because of the assumptions on the sizes of $q,\oq$, none of these poles hinder us from moving the contours of $z_i,...,z_{j-1}$ very far away from each other. Therefore:
$$
\text{RHS of \eqref{eqn:pushsyt}} \Big|_\bla = \int_{z_{j-1} \prec ... \prec z_i \prec \{0,\infty\}} \frac {M(z_i,...,z_{j-1}) \prod_{a=i}^{j-1} \zeta \left( \frac {z_a}{\chi_\bla} \right) \tau_+(z_a) Dz_a}{\prod_{a=i+1}^{j-1} \left(1 - \frac {z_a}{z_{a-1}q^2}\right)\prod_{i \leq a < b < j} \zeta \left( \frac {z_a}{z_b} \right)}
$$
as prescribed by \eqref{eqn:contour 1}. We will compute the-right hand side by integrating over the finite poles, i.e. those other than $0$ and $\infty$. The first variable to be integrated is $z_{j-1}$, and as shown in \eqref{eqn:Gamma+}, the finite poles are of the form $z_{j-1} = \chi_{j-1} := $ weight of an outer corner $\sq_{j-1} \in \bla$ of color $j-1$. If we write $\bnu^{j-1} = \bla \backslash \sq_{j-1}$, then:
$$
\text{RHS of \eqref{eqn:pushsyt}} \Big|_\bla = \sum_{\bla \geq_{j-1} \bnu^{j-1}} \int_{z_{j-2} \prec ... \prec z_i \prec \{0,\infty\}} (q^{-1}-q) \cdot
$$
$$
\frac {M(z_i,...,z_{j-1}) \zeta \left( \frac {z_{j-1}}{\chi_{\bnu^{j-1}}} \right) \tau_+(z_{j-1}) \prod_{a=i}^{j-2} \zeta \left( \frac {z_a}{\chi_{\bnu^{j-1}}} \right) \tau_+(z_a) Dz_a}{\prod_{a=i+1}^{j-1} \left(1 - \frac {z_a}{z_{a-1}q^2}\right)\prod_{i \leq a < b < j-1} \zeta \left( \frac {z_a}{z_b} \right)} \Big|_{z_{j-1} \mapsto \chi_{j-1}}
$$
(see the proof of Proposition \ref{prop:coeffs} for the way the factor $q^{-1}-q$ on the first row arises when computing the integral of $z_{j-1}$ by residues). One needs now to repeat the argument by integrating $z_{j-2}$ over the finite poles, which are now of the form $z_{j-2} = \chi_{j-2} := $ weight of an outer corner $\sq_{j-2}$ of $\bnu^{j-1}$ of color $j-2$. Iterating this argument gives rise to a flag of partitions \eqref{eqn:sytflag}, and the RHS of \eqref{eqn:pushsyt}$|_\bla$ matches \eqref{eqn:donald}. This completes the proof of \eqref{eqn:pushsyt} in the case $\pm = +$. \\

\noindent Let us now prove the case $\pm = -$ of \eqref{eqn:pushsyt}. Formula \eqref{eqn:eqloc vir} reads:
$$
\text{LHS of \eqref{eqn:pushsyt}}  = \sum_{\bmu \vdash \bd^-} | \bmu \rangle \sum^{\syt \text{ of lower}}_{\text{shape }\bmu} \frac {M(\CL_i,...,\CL_{j-1})\cdot [\fZ_{[i;j)}^-]}{\wedge^\bullet(T^\vee \fZ_{[i;j)} - T^\vee \CM_{\bd^-})} \ \Big|_{\syt}
$$	
Recall from \eqref{eqn:tautline} that the restriction of $\CL_a$ to the fixed point corresponding to a SYT is just the weight $\chi_a$ of the box labeled $a$ in the SYT. We will use \eqref{eqn:adjust2} and \eqref{eqn:form2} to compute the adjusted virtual class in the numerator, respectively the exterior class of the normal bundle in the denominator. Therefore, $\text{LHS of \eqref{eqn:pushsyt}} |_\bmu = $
$$
\sum^{\syt \text{ of lower}}_{\text{shape }\bmu} M(\chi_i,...,\chi_{j-1}) \frac {u_i...u_{j-1}}{\chi_i... \chi_{j-1}} \frac {\chi_i}{\chi_{j-1}} (-1)^{j-i-1} \frac {q^{\left \lceil \frac {j-i}n \right \rceil-2}}{q^{d_{i-1} - d_{j-1}}} \frac {(1-1)^{j-i}}{\prod_{a=i+1}^{j-1} \left(1- \frac {\chi_{a-1}q^2}{\chi_a}\right)}
$$
$$
\prod_{a < b}^{a \equiv b} \frac {1 - \frac {\chi_{a}}{\chi_{b}}}{1 - \frac {\chi_{a}q^2}{\chi_{b}}} \prod_{a < b}^{a \equiv b - 1} \frac {1 - \frac {\chi_{a}q^2}{\chi_{b}}}{1 - \frac {\chi_{a}}{\chi_{b}}} \prod_{a=i}^{j-1} \left[ \frac 1{1- \frac {u^2_a}{\chi_a}}\prod^{\col \sq = a}_{\sq \in \bmu} \frac {1 - \frac {\chi_\sq}{\chi_a}}{1 - \frac {\chi_\sq q^2}{\chi_a}} \prod^{\col \sq = a-1}_{\sq \in \bmu} \frac {1 - \frac {\chi_\sq q^2}{\chi_a}}{1 - \frac {\chi_\sq}{\chi_a}} \right] 
$$
where $\bd = |\bmu|$. The factor $(1-1)^{j-i}$ in the numerator of the first row is meant to cancel $j-i$ zeroes in the denominator of the second row. Explicitly, if we write a SYT in the form \eqref{eqn:sytflag}, then we have $\chi_\bmu = \chi_{\bnu^{a+1}} - \chi_{a} - ... - \chi_i$, and so the formula above implies that LHS of \eqref{eqn:pushsyt}$|_\bmu =$
$$
\sum^{\syt \text{ as in}}_{\eqref{eqn:sytflag}} M(\chi_i,...,\chi_{j-1}) \frac {u_i...u_{j-1}}{\chi_i... \chi_{j-1}} \frac {\chi_i}{\chi_{j-1}} (-1)^{j-i-1} \frac {q^{\left \lceil \frac {j-i}n \right \rceil-2}}{q^{d_{i-1} - d_{j-1}}} \frac {(1-q^2)^{j-i}}{\prod_{a=i+1}^{j-1} \left(1- \frac {\chi_{a-1}q^2}{\chi_a}\right)}
$$
$$
\prod_{a=i}^{j-1} \left[ \frac 1{1- \frac {u^2_a}{\chi_a}}\prod^{\col \sq = a}_{\sq \in \bnu^{a+1}} \frac {1 - \frac {\chi_\sq}{\chi_a}}{1 - \frac {\chi_\sq q^2}{\chi_a}} \prod^{\col \sq = a-1}_{\sq \in \bnu^{a+1}} \frac {1 - \frac {\chi_\sq q^2}{\chi_a}}{1 - \frac {\chi_\sq}{\chi_a}} \right] 
$$
Absorbing all the $u_a$'s, $\chi_a$'s and $q$'s into the second row, we obtain the formula:
\begin{equation}
\label{eqn:donald 2}
\text{LHS of \eqref{eqn:pushsyt}} \Big|_\bmu = 
\end{equation}
$$
= \sum^{\syt \text{ as in}}_{\eqref{eqn:sytflag}} \frac {M(\chi_i,...,\chi_{j-1}) (1-q^{-2})^{j-i} \prod_{a=i}^{j-1} \left[\zeta\left(\frac {\chi_{\bnu^{a+1}}}{\chi_a} \right) \tau_-(\chi_a) \right]^{-1} }{\prod_{a=i+1}^{j-1} \left(1- \frac {\chi_a}{\chi_{a-1}q^2}\right)} 
$$
Recalling the notion $\int^-$ from Definition \ref{def:normal}, we note that the $\text{RHS of \eqref{eqn:pushsyt}} |_\bmu = $
$$
\sum^{\text{functions}}_{\sigma : \{i,...,j-1\} \rightarrow \{1,-1\}}  \int^{|q|^{\pm 1}, |\oq|^{\pm 1} > 1}_{|z_{a}| = \gamma^{\sigma(a)}  \oq^{-\frac {2a}n}} \frac {M(z_i,...,z_{j-1}) \prod_{a=i}^{j-1} \left[\zeta \left( \frac {\chi_\bmu}{z_a} \right) \tau_-(z_a) \right]^{-1} \sigma(a) Dz_a}{q^{j-i} \prod_{a=i+1}^{j-1} \left(1 - \frac {z_a}{z_{a-1}q^2}\right)\prod_{i \leq a < b < j} \zeta \left( \frac {z_a}{z_b} \right)}
$$
Each variable $z_a$ is integrated over the difference of two circles, one very small and one very large, which separate the set $S = \{0,\infty\}$ from the finite poles of the integral, by which we mean all the poles that arise from the functions $\zeta$ and $\tau_-$. The only poles that involve two variables $z_a$ and $z_b$ with $a<b$ are of the form:
\begin{align*}
&z_a - z_b \oq^{\frac {2(b-a-1)}n} \quad \text{ for } a \equiv b-1 \\ 
&z_a - z_bq^{-2} \oq^{\frac {2(b-a)}n} \ \text{ for } a \equiv b
\end{align*}
Because of the assumptions on the sizes of $q,\oq$, none of these poles hinder us from moving the contours of $z_i,...,z_{j-1}$ very far away from each other. Therefore:
$$
\text{RHS of \eqref{eqn:pushsyt}} \Big|_\bmu = \int_{z_i \prec ... \prec z_{j-1} \prec \{0,\infty\}} \frac {M(z_i,...,z_{j-1}) \prod_{a=i}^{j-1} \Big[ \zeta \left( \frac {\chi_\bmu}{z_a} \right) \tau_-(z_a) \Big]^{-1} Dz_a}{q^{j-i} \prod_{a=i+1}^{j-1} \left(1 - \frac {z_a}{z_{a-1}q^2}\right)\prod_{i \leq a < b < j} \zeta \left( \frac {z_a}{z_b} \right)}
$$
as prescribed by \eqref{eqn:contour 2}. We will compute the-right hand side by integrating over the finite poles, i.e. those other than $0$ and $\infty$. The first variable to be integrated is $z_i$, and as shown in \eqref{eqn:Gamma-}, the finite poles are of the form $z_i = \chi_i := $ weight of an inner corner $\sq_i \in \bmu$ of color $i$. If we write $\bnu^{i+1} = \bmu \sqcup \sq_i$, then:
$$
\text{RHS of \eqref{eqn:pushsyt}} \Big|_\bmu = \sum_{\bnu^{i+1} \geq_i \bmu} \int_{z_{i+1} \prec ... \prec z_{j-1} \prec \{0,\infty\}} (1-q^{-2}) \cdot
$$
$$
\frac {M(z_i,...,z_{j-1}) \Big[ \zeta \left( \frac {\chi_{\bnu^{i+1}}}{z_i} \right) \tau_-(z_i) \Big]^{-1} \prod_{a=i+1}^{j-1} \Big[ \zeta \left( \frac {\chi_{\bnu^{i+1}}}{z_a} \right) \tau_-(z_a) \Big]^{-1} Dz_a }{q^{j-i-1} \prod_{a=i+1}^{j-1} \left(1 - \frac {z_a}{z_{a-1}q^2}\right)\prod_{i+1 \leq a < b < j} \zeta \left( \frac {z_a}{z_b} \right)} \Big|_{z_i \mapsto \chi_i}
$$
(see the proof of Proposition \ref{prop:coeffs} for the way the factor $1-q^{-2}$ on the first row arises when computing the integral of $z_i$ by residues). One needs now to repeat the argument by integrating $z_{i+1}$ over the finite poles, which are of the form $z_{i+1} = \chi_{i+1} := $ weight of an inner corner $\sq_{i+1}$ of $\bnu^{i+1}$ of color $i+1$. Iterating this argument gives rise to a flag of partitions \eqref{eqn:sytflag}, and we conclude that the RHS of \eqref{eqn:pushsyt}$|_\bmu$ equals \eqref{eqn:donald 2}. This completes the proof of \eqref{eqn:pushsyt} when $\pm = -$.

\end{proof}

\begin{proof} {\bf of Propositions \ref{prop:pushasyt} and \ref{prop:pushasyt change}:} Let us first prove \eqref{eqn:pushasyt} when the sign is $\pm = +$. The virtual equivariant localization formula \eqref{eqn:eqloc vir} reads:
$$
\text{LHS of \eqref{eqn:pushasyt}}  = \sum_{\bla \vdash \bd^+} | \bla \rangle \sum^{\asyt \text{ of upper}}_{\text{shape }\bla} \frac {M(\CL_i,...,\CL_{j-1}) \cdot [\ofZ_{[i;j)}^+]}{\wedge^\bullet(T^\vee \ofZ_{[i;j)} - T^\vee \CM_{\bd^+})} \ \Big |_{\asyt}
$$	
Use \eqref{eqn:adjust3} to compute the adjusted virtual class in the numerator, and \eqref{eqn:form3} to compute the exterior class in the denominator. Therefore, $\text{LHS of \eqref{eqn:pushasyt}} |_\bla = $
$$
\sum^{\asyt \text{ of upper}}_{\text{shape }\bla} M(\chi_i,...,\chi_{j-1}) \cdot u_{i+1}...u_j \frac {q^{- j + i - \left \lfloor \frac {j-i}n \right \rfloor}}{q^{d_i - d_j}} \cdot \frac {(1-1)^{j-i}}{\prod_{a=i+1}^{j-1} \left(1- \frac {\chi_{a-1}}{\chi_a}\right)}
$$
$$
\prod_{a < b}^{a \equiv b} \frac {1 - \frac {\chi_{b}}{\chi_{a}}}{1 - \frac {\chi_{b}q^2}{\chi_{a}}} \prod_{a < b}^{a \equiv b + 1} \frac {1 - \frac {\chi_{b}q^2}{\chi_{a}}}{1 - \frac {\chi_{b}}{\chi_{a}}} \prod_{a=i}^{j-1} \left[\left(1- \frac {\chi_aq^2}{u^2_{a+1}}\right) \prod^{\col \sq = a}_{\sq \in \bla} \frac {1 - \frac {\chi_aq^2}{\chi_\sq}}{1 - \frac {\chi_a}{\chi_\sq}} \prod^{\col \sq = a+1}_{\sq \in \bla} \frac {1 - \frac {\chi_a}{\chi_\sq}}{1 - \frac {\chi_a q^2}{\chi_\sq}} \right] 
$$
As before, the factor $(1-1)^{j-i}$ in the first row is meant to cancel $j-i$ zeroes in the denominator of the rational function on the second row. Explicitly, we may write an ASYT in the form \eqref{eqn:asytflag}, where $\bla = \bnu^s \sqcup B_1 \sqcup ... \sqcup B_s$. Here, each $B_s$ is a vertical strip of boxes of colors $k_{s-1},...,k_s-1$, all of which have the same weight $\chi_s$ if we apply rule\footnote{By this we mean that a box of color $k$ is said to have weight $\chi$ if its weight \eqref{eqn:weight} as a box of color $\bark$ is $\chi \cdot \oq^{2\left \lfloor \frac {k-1}n \right \rfloor}$, where $\bark$ is the residue class of $k$ in the set $\{1,...,n\}$} \eqref{eqn:color change}. Therefore, the formula above implies $\text{LHS of \eqref{eqn:pushasyt}} |_\bla = $
$$
\sum^{\asyt \text{ as in}}_{\eqref{eqn:asytflag}} M(\chi_i,...,\chi_{j-1}) \cdot u_{i+1}...u_j \frac {q^{- j + i - \left \lfloor \frac {j-i}n \right \rfloor}}{q^{d_i - d_j}} \cdot \frac {(1-q^2)^t}{\prod_{s=1}^{t-1}  \left( 1- \frac {\chi_{k_s-1}}{\chi_{k_s}} \right)} \prod_{a=i}^{j-1} \left(1- \frac {\chi_aq^2}{u^2_{a+1}}\right)
$$
$$
\prod_{s=1}^t \left( \prod_{k_{s-1} \leq a < b-1 < k_s-1} \left[ \frac {1-\frac {\chi_a q^2}{\chi_b}}{1-\frac {\chi_a}{\chi_b}} \right]^{\delta_{\bar{a}}^{\bar{b}} - \delta_{\bar{a}+1}^{\bar{b}}}  \prod_{a=k_{s-1}}^{k_s-1} \left[ \prod^{\col \sq = a}_{\sq \in \bnu^s} \frac {1 - \frac {\chi_aq^2}{\chi_\sq}}{1 - \frac {\chi_a}{\chi_\sq}} \prod^{\col \sq = a+1}_{\sq \in \bnu^s} \frac {1 - \frac {\chi_a}{\chi_\sq}}{1 - \frac {\chi_a q^2}{\chi_\sq}} \right] \right)
$$
Absorbing all the $u_a$'s and $q$'s into the second row, we obtain the formula:
\begin{equation}
\label{eqn:donald 3}
\text{LHS of \eqref{eqn:pushasyt}} \Big|_\bla = 
\end{equation}
$$
= \sum^{\asyt \text{ as in}}_{\eqref{eqn:asytflag}} \frac {M(\chi_i,...,\chi_{j-1}) (q^{-1}-q)^t \prod_{s=1}^t \left[ \beta_{k_s - k_{s-1}} \prod_{a=k_{s-1}}^{k_s-1} \zeta \left( \frac {\chi_a}{\chi_{\bnu^s}} \right) \tau_+(\chi_a)  \right]}{\prod_{s=1}^{t-1} \left(1- \frac {\chi_{k_s-1}}{\chi_{k_s}}\right)}
$$
where:
\begin{equation}
\label{eqn:def beta}
\beta_k = \prod_{0 \leq a < b-1 < k - 1} \zeta \left( \frac {1_a}{1_b} \right) = \prod_{i=1}^{\left \lfloor \frac {k-1}n \right \rfloor} \frac {q^{-1} - q \cdot \oq^{-2i}}{1-\oq^{-2i}}
\end{equation}
where $1_{a}$ refers the value 1 as a variable of color $a$. \\

\noindent Recalling the notion $\int^+$ from Definition \ref{def:normal}, we note that the $\text{RHS of \eqref{eqn:pushasyt}} |_\bla = $
$$
= \sum^{\text{functions}}_{\sigma : \{i,...,j-1\} \rightarrow \{1,-1\}}  \int^{|q|^{\pm 1}, |\oq|^{\pm 1} < 1}_{|z_{a}| = \gamma^{\sigma(a)}  \oq^{-\frac {2a}n}} \frac {M(z_i,...,z_{j-1}) \prod_{a=i}^{j-1} \zeta \left( \frac {z_a}{\chi_\bla} \right) \tau_+(z_a) \sigma(a) Dz_a}{\prod_{a=i+1}^{j-1} \left(1 - \frac {z_{a-1}}{z_a}\right)\prod_{i \leq a < b < j} \zeta \left( \frac {z_b}{z_a} \right)} 
$$
Each variable $z_a$ is integrated over the difference of two circles, one very small and one very large, which separate the set $S = \{0,\infty\}$ from the finite poles of the integral, by which we mean all the poles that arise from the functions $\zeta$ and $\tau_+$. The only poles that involve two variables $z_a$ and $z_b$ with $a<b$ are of the form:
\begin{align}
&z_a - z_b \oq^{\frac {2(b-a+1)}n} \ \text{ for } a \equiv b + 1 \label{eqn:poles1} \\ 
&z_a - z_b q^2 \oq^{\frac {2(b-a)}n} \ \text{ for } a \equiv b \label{eqn:poles2}
\end{align}
as well as $z_{a} = z_{a-1}$. Because of the assumptions on the sizes of $q,\oq$, the poles \eqref{eqn:poles1}--\eqref{eqn:poles2} do not hinder us from moving the contours of $z_i,...,z_{j-1}$ very far away from each other (with $z_b$ closer to $0$ and $\infty$ than $z_a$, if $a<b$). However, the poles $z_a = z_{a-1}$ do hinder us, and we have to consider the corresponding residues:
$$
\text{RHS of \eqref{eqn:pushasyt}}\Big |_\bla = \sum^{t\geq 1}_{i = k_0 < k_1 < ... < k_t = j} \int_{z_i = ... = z_{k_1-1} \prec ... \prec z_{k_{t-1}} = ... = z_{j-1} \prec \{0,\infty\}}
$$
\begin{equation}
\label{eqn:chain}
\frac {M(z_i,...,z_{j-1}) \prod_{a=i}^{j-1} \zeta \left( \frac {z_a}{\chi_\bla} \right) \tau_+(z_a)}{\prod_{s=1}^{t-1} \left(1 - \frac {z_{k_s-1}}{z_{k_s}}\right)\prod_{i \leq a < b < j} \zeta \left( \frac {z_b}{z_a} \right)} \prod_{s=1}^t Dz_{k_s-1} 
\end{equation}
as prescribed by \eqref{eqn:contour 3}. Each summand in \eqref{eqn:chain} corresponds to a {\bf chain}: 
$$
i = k_0 < ... < k_t = j
$$
Such a chain can be thought of as consisting of {\bf links}, namely collections of variables: 
\begin{equation}
\label{eqn:equal}
z_{k_s-1} = ... = z_{k_{s-1}}
\end{equation}
which are set equal to each other in \eqref{eqn:chain}. We will think of such a link as a vertical strip of boxes of colors $k_s-1,...,k_{s-1}$ (indeed, the weights of the boxes in such a vertical strip are all equal, given the convention \eqref{eqn:tautline}). We will compute the integral \eqref{eqn:chain} as a sum of residues over the finite poles, namely those poles other than $0$ and $\infty$. The first variable which we have to integrate is $y_1 := z_{i} = ... = z_{k_1-1}$, and by \eqref{eqn:Gamma+} we have a pole in this variable whenever:
$$
y_1 = \chi_\sq \quad \text{ for }\sq \text{ an outer corner of color }c\text{ of }\bla
$$
for some $c \in \{i,...,k_1-1\}$. As explained above, this corresponds to taking a vertical strip consisting of boxes of colors $k_1-1,...,i$ and placing the box of color $c$ in this strip on top of the outer corner $\sq \in \bla$. Because of the linear factors $1 - \frac {z_{k_s-1}}{z_{k_s}}$ in \eqref{eqn:chain}, the same residue can be obtained in other ways, by considering the ``finer" chain where the link $z_i = ... = z_{k_1-1}$ is subdivided into any $r+1$ smaller links: 
$$
z_i = ... = z_{c} \qquad z_{c+1} = ... = z_{u_1 - 1} \qquad ... \qquad z_{u_{r-1}} = ... = z_{k_1-1}
$$ 
The sign with which the residue appears from the finer chain is $(-1)^{r}$. Since there are $2^{k_1-1-c}$ finer chains, the total contribution of the given residue is:
$$
\sum_{r=0}^{k_1-1-c} (-1)^{r} {k_1-1-c \choose r} = \delta_{k_1-1}^c
$$
We conclude that the only non-zero residue one obtains is when the box of color $k_1-1$ of the vertical strip is placed on top of the outer corner $\sq \in \bla$. In other words, this corresponds to removing a vertical strip from the partition $\bla$:
$$
\bla = \bnu^1 \sqcup \Big( \text{vertical strip } \sq_{k_1-1},...,\sq_i \Big)
$$
for some partition $\bnu^1$. Letting $\chi_a$ denote the weight of the box $\sq_a$, we see that:
$$
\text{RHS of \eqref{eqn:pushasyt}}\Big |_\bla =  \sum^{t\geq 1}_{i = k_0 < k_1 < ... < k_t = j} \int_{z_{k_1} = ... = z_{k_2-1} \prec ... \prec z_{k_{t-1}} = ... = z_{j-1} \prec \{0,\infty\}} M(z_i,...,z_{j-1})
$$
$$
\frac {(q^{-1}-q) \beta_{k_1-i} \prod_{a=i}^{k_1-1} \zeta \left( \frac {z_a}{\chi_{\bnu^1}} \right) \tau_+(z_a) \prod_{a=k_1}^{j-1} \zeta \left( \frac {z_a}{\chi_{\bnu^1}} \right) \tau_+(z_a)}{\prod_{s=1}^{t} \left(1 - \frac {z_{k_s-1}}{z_{k_s}}\right)\prod_{k_1 \leq a < b < j} \zeta \left( \frac {z_b}{z_a} \right)} \prod_{s=2}^t Dz_{k_s-1} \ \Big|_{z_{k_1-1} \mapsto \chi_{k_1-1},...,z_i \mapsto \chi_i}
$$
One needs now to integrate $z_{k_2 - 1} = ... = z_{k_1}$ over the finite poles, which by the same reasoning as above, corresponds to removing a whole vertical strip from $\bnu^1$. Iterating this argument gives rise to a flag of partitions \eqref{eqn:asytflag}, in other words an almost standard Young tableau of upper shape $\bla$, and the RHS of \eqref{eqn:pushasyt}$|_\bla$ equals \eqref{eqn:donald 3}. This completes the proof of \eqref{eqn:pushasyt} when $\pm = +$. \\

\noindent Let us now prove the case $\pm = -$ of \eqref{eqn:pushasyt}. Formula \eqref{eqn:eqloc vir} reads:
$$
\text{LHS of \eqref{eqn:pushasyt}}  = \sum_{\bmu \vdash \bd^-} | \bmu \rangle \sum^{\asyt \text{ of lower}}_{\text{shape }\bmu} \frac {M(\CL_i,...,\CL_{j-1}) \cdot [\ofZ_{[i;j)}^-]}{\wedge^\bullet(T^\vee \ofZ_{[i;j)} - T^\vee \CM_{\bd^-})} \ \Big |_{\asyt}
$$	
Use \eqref{eqn:adjust4} to compute the adjusted virtual class in the numerator, and \eqref{eqn:form4} to compute the exterior class in the denominator. Therefore, $\text{LHS of \eqref{eqn:pushasyt}} |_\bla = $
$$
\sum^{\asyt \text{ of lower}}_{\text{shape }\bmu} M(\chi_i,...,\chi_{j-1}) \cdot \frac {u_i...u_{j-1}}{\chi_i... \chi_{j-1}} \frac {q^{- j + i - \left \lfloor \frac {j-i}n \right \rfloor}}{q^{d_{i-1} - d_{j-1}}} \cdot \frac {(1-1)^{j-i}}{\prod_{a=i+1}^{j-1} \left(1- \frac {\chi_{a-1}}{\chi_a}\right)}
$$
$$
\prod_{a < b}^{a \equiv b} \frac {1 - \frac {\chi_{b}}{\chi_{a}}}{1 - \frac {\chi_{b}q^2}{\chi_{a}}} \prod_{a < b}^{a \equiv b + 1} \frac {1 - \frac {\chi_{b}q^2}{\chi_{a}}}{1 - \frac {\chi_{b}}{\chi_{a}}} \prod_{a=i}^{j-1} \left[ \frac 1{\left(1- \frac {u^2_a}{\chi_a}\right)} \prod^{\col \sq = a}_{\sq \in \bmu} \frac {1 - \frac {\chi_\sq}{\chi_a}}{1 - \frac {\chi_\sq q^2}{\chi_a}} \prod^{\col \sq = a-1}_{\sq \in \bmu} \frac {1 - \frac {\chi_\sq q^2}{\chi_a}}{1 - \frac {\chi_\sq}{\chi_a}} \right] 
$$
As before, the factor $(1-1)^{j-i}$ in the first row is meant to cancel $j-i$ zeroes in the denominator of the rational function on the second row. Explicitly, we may write an ASYT in the form \eqref{eqn:asytflag}, where $\bmu = \bnu^{s-1} \backslash (B_s \sqcup ... \sqcup B_t)$, where each $B_s$ is a vertical strip of boxes of colors $k_{s-1},...,k_s-1$. Therefore, the formula above implies $\text{LHS of \eqref{eqn:pushasyt}} |_\bla = $
$$
\sum^{\asyt \text{ as in}}_{\eqref{eqn:asytflag}} M(\chi_i,...,\chi_{j-1}) \cdot \frac {u_i...u_{j-1}}{\chi_i... \chi_{j-1}} \frac {q^{- j + i - \left \lfloor \frac {j-i}n \right \rfloor}}{q^{d_{i-1} - d_{j-1}}} \cdot \frac {(1-q^2)^t}{\prod_{s=1}^{t-1}  \left( 1- \frac {\chi_{k_s-1}}{\chi_{k_s}} \right)} \prod_{a=i}^{j-1} \frac 1{\left(1- \frac {u^2_a}{\chi_a}\right)}
$$
$$
\prod_{s=1}^t \left( \prod_{k_{s-1} \leq a < b-1 < k_s-1} \left[ \frac {1-\frac {\chi_a q^2}{\chi_b}}{1-\frac {\chi_a}{\chi_b}} \right]^{\delta_{\bar{a}}^{\bar{b}} - \delta_{\bar{a}+1}^{\bar{b}}}  \prod_{a=k_{s-1}}^{k_s-1} \left[\prod^{\col \sq = a}_{\sq \in \bnu^{s-1}} \frac {1 - \frac {\chi_\sq}{\chi_a}}{1 - \frac {\chi_\sq q^2}{\chi_a}} \prod^{\col \sq = a-1}_{\sq \in \bnu^{s-1}} \frac {1 - \frac {\chi_\sq q^2}{\chi_a}}{1 - \frac {\chi_\sq}{\chi_a}} \right] \right)
$$
Absorbing all the $u_a$'s, $\chi_a$'s and $q$'s into the second row, we obtain the formula:
\begin{equation}
\label{eqn:donald 4}
\text{LHS of \eqref{eqn:pushasyt}} \Big|_\bmu =  \sum^{\asyt \text{ as in}}_{\eqref{eqn:asytflag}} 
\end{equation}
$$
\frac {M(\chi_i,...,\chi_{j-1}) (-1)^{j-i} (q^{-1} - q)^t \prod_{s=1}^t \left[ \beta_{k_s - k_{s-1}} \prod_{a=k_{s-1}}^{k_s-1} \left[ \zeta \left( \frac {\chi_{\bnu^{s-1}}}{\chi_a} \right)  \tau_-(\chi_a) \right]^{-1}  \right]}{q^{j-i}\prod_{s=1}^{t-1} \left(1- \frac {\chi_{k_s-1}}{\chi_{k_s}}\right)}
$$
where $\beta_k$ is defined in \eqref{eqn:def beta}. Recalling the notion $\int^-$ from Definition \ref{def:normal}, we note that the $\text{RHS of \eqref{eqn:pushasyt}}|_\bmu = $
$$
= \sum^{\text{functions}}_{\sigma : \{i,...,j-1\} \rightarrow \{1,-1\}}  \int^{|q|^{\pm 1}, |\oq|^{\pm 1} > 1}_{|z_{a}| = \gamma^{\sigma(a)}  \oq^{-\frac {2a}n}} \frac {M(z_i,...,z_{j-1}) \prod_{a=i}^{j-1} \left[ \zeta \left( \frac {\chi_\bmu}{z_a} \right) \tau_-(z_a) \right]^{-1} \sigma(a) Dz_a}{q^{j-i}\prod_{a=i+1}^{j-1} \left(1 - \frac {z_{a-1}}{z_a}\right)\prod_{i \leq a < b < j} \zeta \left( \frac {z_b}{z_a} \right)} 
$$
Each variable $z_a$ is integrated over the difference of two circles, one very small and one very large, which separate the set $S = \{0,\infty\}$ from the finite poles of the integral, by which we mean all the poles that arise from the functions $\zeta$ and $\tau_+$. The only poles that involve two variables $z_a$ and $z_b$ with $a<b$ are of the form \eqref{eqn:poles1}--\eqref{eqn:poles2}, as well as $z_{a} = z_{a-1}$. Because of the assumptions on the sizes of $q,\oq$, the poles \eqref{eqn:poles1}--\eqref{eqn:poles2} do not hinder us from moving the contours of $z_i,...,z_{j-1}$ very far away from each other (with $z_a$ closer to $0$ and $\infty$ than $z_b$, if $a<b$). However, the poles $z_a = z_{a-1}$ do hinder us, and we have to consider the corresponding residues:
$$
\text{RHS of \eqref{eqn:pushasyt}}\Big |_\bmu = \sum^{t\geq 1}_{i = k_0 < k_1 < ... < k_t = j} \int_{z_{k_{t-1}} = ... = z_{j-1} \prec ... \prec  z_i = ... = z_{k_1-1} \prec \{0,\infty\}} (-1)^{j-i-t}
$$
\begin{equation}
\label{eqn:chain 2}
\frac {M(z_i,...,z_{j-1}) \prod_{a=i}^{j-1} \left[\zeta \left( \frac {\chi_\bmu}{z_a}\right) \tau_-(z_a) \right]^{-1}}{\prod_{s=1}^{t-1} \left(1 - \frac {z_{k_s-1}}{z_{k_s}}\right)\prod_{i \leq a < b < j} \zeta \left( \frac {z_b}{z_a} \right)} \prod_{s=1}^t Dz_{k_s-1} 
\end{equation}
as prescribed by \eqref{eqn:contour 4}. The factor $(-1)^{j-i-t}$ arises from taking the residue at $z_{a-1}$ in the variable $z_{a}$:
$$
\forall a \in \{i+1,...,j-1\} \backslash \{k_1,...,k_{t-1}\}
$$ 
As in the case $\pm = +$ treated above, each summand in \eqref{eqn:chain 2} corresponds to a chain $i = k_0 < ... < k_t = j$ consisting of links of equal variables \eqref{eqn:equal}. We will compute the integral \eqref{eqn:chain 2} as a sum of residues over the finite poles, namely those poles other than $0$ and $\infty$. The first variable which we have to integrate is $y_t := z_{k_{t-1}-1} = ... = z_{j-1}$, and by \eqref{eqn:Gamma-} we have a pole in this variable whenever:
$$
y_t = \chi_\sq \quad \text{ for }\sq \text{ an inner corner of color }c\text{ of }\bmu
$$
for some $c \in \{k_{t-1},...,j-1\}$. As in the case $\pm = +$ discussed above, the only choice which produces a non-zero contribution to the integral \eqref{eqn:chain 2} is when $c = k_{t-1}$, which corresponds to adding a vertical strip to $\bmu$:
$$
\bmu \sqcup \Big( \text{vertical strip } \sq_{j-1},...,\sq_{k_{t-1}} \Big) = \bnu^{t-1}
$$
for some partition $\bnu^{t-1}$. Letting $\chi_a$ denote the weight of the box $\sq_a$, we see that:
$$
\text{RHS of \eqref{eqn:pushasyt}}\Big |_\bmu =  \sum^{t\geq 1}_{i = k_0 < k_1 < ... < k_t = j} \int_{z_{k_{t-2}} = ... = z_{k_{t-1}-1} \prec ... \prec z_i = ... = z_{k_1-1} \prec \{0,\infty\}} (-1)^{j-i-t} 
$$
$$
\frac { \prod_{a=k_{t-1}}^{j-1} \left[ \zeta \left( \frac {\chi_{\bnu^{t-1}}}{z_a} \right) \tau_-(z_a) \right]^{-1} \prod_{a=i}^{k_{t-1}-1} \left[\zeta \left( \frac {\chi_{\bnu^{t-1}}}{z_a} \right) \tau_-(z_a) \right]^{-1}}{q^{j-i} \prod_{s=1}^{t} \left(1 - \frac {z_{k_s-1}}{z_{k_s}}\right)\prod_{i \leq a < b < k_{t-1}} \zeta \left( \frac {z_b}{z_a} \right)} 
$$
$$
M(z_i,...,z_{j-1}) (q - q^{-1}) \beta_{j-k_{t-1}} \prod_{s=1}^{t-1} Dz_{k_s-1} \ \Big|_{z_{j-1} \mapsto \chi_{j-1},...,z_{k_{t-1}} \mapsto \chi_{k_{t-1}}}
$$
One needs now to integrate $z_{k_{t-1} - 1} = ... = z_{k_{t-2}}$ over the finite poles, which by the same reasoning corresponds to adding a whole vertical strip to $\bnu^{t-1}$. Iterating this argument gives rise to a flag of partitions \eqref{eqn:asytflag}, in other words an almost standard Young tableau of lower shape $\bmu$, and the RHS of \eqref{eqn:pushasyt}$|_\bmu$ equals \eqref{eqn:donald 4}. This completes the proof of \eqref{eqn:pushasyt} when $\pm = -$. 

\end{proof}

\begin{proof} {\bf of Proposition \ref{prop:pushsmooth} and \ref{prop:pushsmooth change}:} We will only prove \eqref{eqn:pushsmooth} and \eqref{eqn:contours smooth} when the sign is $+$, and leave the case of $-$ to the interested reader. The virtual equivariant localization formula \eqref{eqn:eqloc vir} allows to compute the LHS of \eqref{eqn:pushsmooth}:
$$
\pi^+_*\Big(M(...,\CL_i,...) \cdot [\fW_\bk^+] \Big) = \sum_{\bla} |\bla \rangle \sum^{\bmu \text{ such that}}_{\blamu \text{ is as in }\eqref{eqn:strip}} \frac {M(...,\CL_i,...) \cdot [\fW_\bk^+] }{\wedge^\bullet\left([T^\vee\fW_\bk] - [T^\vee\CM_{\bd^+}]\right)} \ \Big|_{(\bla, \bmu)}
$$
We will use $\CL_i|_{(\bla,\bmu)} = \sum_{\bsq \in \blamu}^{\col \bsq = i} \chi_\bsq$ and the definition of the adjusted virtual class \eqref{eqn:adjust5} to compute the numerator, and \eqref{eqn:form5} to compute the denominator:
$$
\text{LHS of \eqref{eqn:pushsmooth}} \Big|_\bla = \sum^{\bmu \text{ such that}}_{\blamu \text{ is as in }\eqref{eqn:strip}} M(\blamu) \cdot \frac {u_{\bk+1}}{q^{|\bk|+\langle \bk, \bd^+ \rangle}} \cdot \prod_{\bsq \in \blamu} \left(1 - \frac {\chi_\bsq q^2}{u_{\col \bsq + 1}^2} \right)
$$
$$
\prod_{\bsq,\sq \in \blamu} \left(1 - \frac {\chi_\bsq}{\chi_{\sq}} \right)^{\delta_{\col \bsq}^{\col \sq} - \delta_{\col \bsq +1}^{\col \sq}} \prod^{\bsq \in \blamu}_{\sq \in \bla} \left( \frac {1 - \frac {\chi_\bsq q^2}{\chi_\sq}}{1 - \frac {\chi_\bsq}{\chi_\sq}}\right)^{\delta^{\col \sq}_{\col \bsq} - \delta^{\col \sq}_{\col \bsq+1}}  
$$
If we convert the sum over $\sq \in \bla$ to a sum over $\sq \in \bmu$, the expression above equals:
$$
\text{LHS of \eqref{eqn:pushsmooth}} \Big|_\bla = \sum^{\bmu \text{ such that}}_{\blamu \text{ is as in }\eqref{eqn:strip}} M(\blamu) \cdot \frac {u_{\bk+1}}{q^{|\bk|+\langle \bk, \bd^+ \rangle}} \cdot \prod_{\bsq \in \blamu} \left(1 - \frac {\chi_\bsq q^2}{u_{\col \bsq + 1}^2} \right)
$$
$$
\prod_{\bsq,\sq \in \blamu} \left(1 - \frac {\chi_\bsq q^2}{\chi_{\sq}} \right)^{\delta_{\col \bsq}^{\col \sq} - \delta_{\col \bsq +1}^{\col \sq}} \prod^{\bsq \in \blamu}_{\sq \in \bmu} \left( \frac {1 - \frac {\chi_\bsq q^2}{\chi_\sq}}{1 - \frac {\chi_\bsq}{\chi_\sq}}\right)^{\delta^{\col \sq}_{\col \bsq} - \delta^{\col \sq}_{\col \bsq+1}}  
$$
and if we absorb the $u_a$'s and $q$'s from the first row into the second row, we obtain:
\begin{equation}
\label{eqn:mish}
\text{LHS of \eqref{eqn:pushsmooth}} \Big|_\bla = \sum^{\bmu \text{ such that}}_{\blamu \text{ is as in }\eqref{eqn:strip}} M(\blamu) \cdot 
\end{equation}
$$
\prod_{\bsq,\sq \in \blamu} \left(\frac 1q - \frac {\chi_\bsq q}{\chi_{\sq}} \right)^{\delta_{\col \bsq}^{\col \sq} - \delta_{\col \bsq +1}^{\col \sq}}  \prod_{\bsq \in \blamu} \left[ \zeta \left( \frac {\chi_\bsq}{\chi_\bmu} \right)\tau_+(\chi_\bsq) \right]
$$
We will compute the right hand side of \eqref{eqn:pushsmooth} by changing the contours in the integral, and showing that the resulting residue computation equals \eqref{eqn:mish}. To this end, recall that the normal ordered integral was defined in Definition \ref{def:normal} as:
$$
\text{RHS of \eqref{eqn:pushsmooth}} \Big |_\bla = \frac {1}{\bk!} \sum^{\text{functions}}_{\sigma : \{(i,a)\}^{1 \leq i \leq n}_{1\leq a \leq k_i} \rightarrow \{1,-1\}}  \int^{|q|^{\pm 1}, |\oq|^{\pm 1} < 1}_{|z_{ia}| = \gamma^{\sigma(i,a)}  \oq^{-\frac {2i}n}} M(...,z_{ia},...) 
$$
\begin{equation}
\label{eqn:tina}
\frac {\prod_{1 \leq i \leq n} \prod_{1 \leq a \neq b \leq k_i} \left(1 - \frac {z_{ib}}{z_{ia}} \right)}{\prod_{1 \leq i \leq n} \prod^{1 \leq a \leq k_i}_{1 \leq b \leq k_{i-1}} \left(1 - \frac {z_{i-1,b}}{z_{ia}} \right)} \prod_{1 \leq i \leq n}^{1 \leq a \leq k_i} \zeta \left( \frac {z_{ia}}{\chi_\bla} \right) \tau_+(z_{ia}) \sigma(i,a) Dz_{ia}
\end{equation}
The only poles that involve two of the $z$ variables are of the form $z_{ia} = z_{i-1,b}$. We will seek to move the contours of $\{z_{ia}\}$ far apart from each other by the following procedure: pick an arbitrary $z_{ia}$, $1 \leq a \leq k_i$, which we will henceforth call the \textbf{starting variable}, and seek to move its contour toward $\{0,\infty\}$. The only poles we can pick up along the way are of the form $z_{ia} = z_{i-1,b}$ for some $1 \leq b \leq k_{i-1}$. For the corresponding residue, try to move the contour of $z_{i-1,b}$ toward $\{0,\infty\}$. As there are zeroes at $z_{i-1,b} = z_{i-1,b'}$ in the integrand, the only poles we can pick up are of the form $z_{ia} = z_{i-1,b} = z_{i-2,c}$ for some $1 \leq c \leq i-2$. Iterating this argument shows that the only residues one obtains are given by partitions of the variable set:
\begin{equation}
\label{eqn:turner}
\{z_{ia}\}^{1\leq i \leq n}_{1\leq a \leq k_i} = B_1 \sqcup B_2 \sqcup ... \sqcup B_t
\end{equation}
where:
\begin{equation}
\label{eqn:strip proof}
B_s = \Big\{ z_{i_s,a_s}, ..., \underline{z_{j_s-1,b_s}}\Big\}
\end{equation}
for various $i_s < j_s$, and we underline the starting variable. The variables in each group are set equal to each other $z_{i_s,a_s} = ... = z_{j_s-1,b_s}  =: y_s$, so we interpret each $B_s$ as the set of weights of a vertical strip of boxes of colors $i_s,...,j_s-1$. With the above discussion in mind, \eqref{eqn:tina} yields:
$$
\text{RHS of \eqref{eqn:pushsmooth}} \Big |_\bla = \sum^{\text{decompositions}}_{\text{as in \eqref{eqn:turner}}}  \int_{y_1 \prec ... \prec y_t \prec \{0,\infty\}} \frac {M(...,z_{ia},...)}{\#_{B_1,...,B_t}}
$$
\begin{equation}
\label{eqn:oier}
\prod_{s,s'=1}^t \prod_{z' \in B_{s'}}^{z\in B_s} \left(1 - \frac {z}{z'} \right)^{\delta^{\col z}_{\col z'} - \delta^{\col z}_{\col z'-1}} \prod_{1\leq s \leq t}^{z\in B_s} \zeta \left( \frac z{\chi_\bla} \right)\tau_+(z) \Big|^{z \mapsto y_s}_{\text{if }z \in B_s} \prod_{s=1}^t Dy_s \qquad
\end{equation}
While we will not mention this explicitly in order to keep the notation legible, one must take care to remove all 0 factors from the numerators or denominators of our products, such as the situation when $z = z'$ in the first product on the second row of \eqref{eqn:oier}. The denominator that appears on the first row of \eqref{eqn:oier} is:
\begin{equation}
\label{eqn:diez}
\#_{B_1,...,B_t} =
\end{equation} 
$$
= \prod_{a=1}^t \Big( (j_a-1)\text{--th entry of the vector } \bk - [i_{a+1};j_{a+1}) - ... - [i_t;j_t) \in \nn \Big)
$$
The appearance of $\#_{B_1,...,B_t}$ in \eqref{eqn:oier} stems from the fact that the second indices of the variables $\{z_{jb}\}$ can be permuted at random, except for the second indices of the starting variables, which are fixed by our algorithm. This proves \eqref{eqn:contours smooth}. \\

\noindent To compute the integral \eqref{eqn:oier}, we will first integrate $y_1$ over the finite poles of the integral (namely those poles $\neq 0,\infty$). According to \eqref{eqn:Gamma+}, these are of the form $y_1 = \chi_1 = $ weight of an outer corner $\sq$ of the partition $\bla$. This corresponds to a vertical strip of boxes of colors $i_1,...,j_1-1$, such that the outer corner $\sq$ has color $c \in \{i_1,...,j_1-1\}$. As in the discussion following \eqref{eqn:chain} in the proof of Proposition \ref{prop:pushasyt}, the corresponding residue is canceled by an inclusion-exclusion argument, unless:
$$
c = j_1 - 1 \quad \Rightarrow \quad \bla = \bnu^1 \sqcup \Big( \text{vertical strip } B_1 \Big)
$$ 
With this in mind, we may rewrite \eqref{eqn:oier} by breaking up the products in terms of whether each variable $z$ lies in $B_2 \sqcup .. \sqcup B_t$ or in $B_1$. Therefore, $\text{RHS of \eqref{eqn:pushsmooth}} |_\bla =$
$$
= \sum^{\text{decompositions}}_{\text{as in \eqref{eqn:turner}}} \ \int_{y_2 \prec ... \prec y_t \prec \{0,\infty\}} \frac {M(...,z_{ia},...)}{\#_{B_1,...,B_t}} \prod_{2 \leq s \leq t}^{w\in B_1, z\in B_s} \frac {\left(1-\frac wz\right)^{\delta^{\col z}_{\col w}}\left(\frac 1q -\frac {zq}{w}\right)^{\delta^{\col z}_{\col w}}}{\left(1-\frac {w}{z}\right)^{\delta^{\col z}_{\col w+1}}\left(\frac 1q -\frac {zq}w \right)^{\delta^{\col z}_{\col w-1}}}
$$
\begin{equation}
\label{eqn:tempy}
\prod_{w,w' \in B_1} \left(1 - \frac {w}{w'} \right)^{\delta^{\col w}_{\col w'} - \delta^{\col w}_{\col w' - 1}} \prod_{s,s'=2}^{t} \prod_{z' \in B_{s'}}^{z\in B_s} \left(1 - \frac {z}{z'} \right)^{\delta^{\col z}_{\col z'} - \delta^{\col z}_{\col z' - 1}}
\end{equation}
$$
\prod_{w\in B_1} \zeta \left( \frac w{\chi_\bla} \right)\tau_+(w) \prod_{2 \leq s \leq t}^{z\in B_s} \zeta \left( \frac z{\chi_{\bnu^1}} \right)\tau_+(z) \Big|^{w \mapsto y_1, z \mapsto y_s}_{\text{if }w \in B_1, z \in B_s} \ \prod_{s=2}^{t} Dy_s \Big|_{y_1 \mapsto \chi_1}				
$$
We may now analyze the finite poles of the integrand in $y_2$, which corresponds to those variables in $B_2$. Note that the finite poles come from $\chi_{\bnu^1}$ as well as the $w$ variables. Relation \eqref{eqn:Gamma+} tells us that the third row of \eqref{eqn:tempy} has a pole when $y_2 = \chi_2 = $ weight of an outer corner of the partition $\bnu^1 = \bla \backslash B_1$, but also a zero when $y_2 q^2 = $ weight of an inner corner of the partition $\bnu^1$. The latter zero precisely cancels out the pole produced by the first row of \eqref{eqn:tempy}, so we can repeat our argument to conclude that the variables in $B_{t-1}$ correspond to removing a strip:
$$
\bnu^1 = \bnu^2 \sqcup \Big(\text{vertical strip }B_2\Big)
$$
which does not share a common vertical edge with $B_1$. Iterating this argument shows that integrating over the various remaining variables $y_3,...,y_t$ amounts to removing vertical strips from $\bla$, no two sharing a vertical edge. We obtain:
$$
\text{RHS of \eqref{eqn:pushsmooth}} \Big |_\bla = \sum^{\text{decompositions}}_{\text{as in \eqref{eqn:turner}}} \frac {M(...,z_{ia},...)}{\#_{B_1,...,B_t}} \prod_{1\leq s < s' \leq t}^{w\in B_s, z\in B_{s'}} \frac {\left(1-\frac wz\right)^{\delta^{\col z}_{\col w}}\left(\frac 1q -\frac {zq}{w}\right)^{\delta^{\col z}_{\col w}}}{\left(1-\frac {w}{z}\right)^{\delta^{\col z}_{\col w+1}}\left(\frac 1q -\frac {zq}w \right)^{\delta^{\col z}_{\col w-1}}}
$$
$$
\cdot \prod_{s=1}^t \left( \prod_{z,z' \in B_s} \left(1 - \frac {z}{z'} \right)^{\delta^{\col z}_{\col z'} - \delta^{\col z}_{\col z'-1}} \prod_{z\in B_s} \zeta \left( \frac z{\chi_{\bnu^{s-1}}} \right)\tau_+(z) \right) \Big|^{z \mapsto \text{weight of }s-\text{th strip}}_{\text{if }z \in B_s} 
$$
In the formula above, the variables $z \in B_s$ become specialized to the weights of the boxes in the strips $B_s$, $\forall s \in \{1,...,t\}$. If we let $\bmu = \bnu_t$, hence: 
$$
\bnu_{s-1} = \bmu \sqcup B_s \sqcup ... \sqcup B_t
$$
for all $s$, then the expression above becomes:
\begin{equation}
\label{eqn:phish}
\text{RHS of \eqref{eqn:pushsmooth}} \Big|_\bla = \sum^{\bmu \text{ such that}}_{\blamu = B_1 \sqcup ... \sqcup B_t} \frac {M(\blamu)}{\#_{B_1,...,B_t}} \cdot 
\end{equation}
$$
\prod_{\bsq,\sq \in \blamu} \left(\frac 1q - \frac {\chi_\bsq q}{\chi_{\sq}} \right)^{\delta_{\col \bsq}^{\col \sq} - \delta_{\col \bsq +1}^{\col \sq}}  \prod_{\bsq \in \blamu} \zeta \left( \frac {\chi_\bsq}{\chi_\bmu} \right)\tau_+(\chi_\bsq)
$$
Besides $\#_{B_1,...,B_t}$, the only difference between formulas \eqref{eqn:mish} and \eqref{eqn:phish} is that: \\

\begin{itemize}[leftmargin=*]
	
\item In \eqref{eqn:mish}, $\blamu$ is decomposed into strips $S_1,...,S_t$ with no common edge \\

\item In \eqref{eqn:phish}, each summand corresponds to a decomposition of $\blamu$ into vertical strips $B_1,...,B_t$ which may have a horizontal common edge, but no vertical common edges (the factors in the numerator of the first line of \eqref{eqn:tempy} kill poles corresponding to vertical strips $B_i$ which may share vertical common edges). \\

\end{itemize}

\noindent It is clear that any skew diagram $\blamu$ can be decomposed at most once as in the first bullet. Meanwhile, any decomposition as in the second bullet can only be a refinement of a decomposition in the first bullet (meaning that every $S_a$ splits up into several of the $B_i$'s). Therefore, the fact that \eqref{eqn:mish} equals \eqref{eqn:phish} (which would complete the proof of the Proposition) follows from the fact that for any collection of vertical strips $S_1,...,S_t$ with no common edge, we have the equality:
\begin{equation}
\label{eqn:equality}
\sum^{B_1 \sqcup ... \sqcup B_{t'} = }_{= S_1 \sqcup ... \sqcup S_t} \frac 1{\#_{B_1,...,B_{t'}}} = 1 
\end{equation}
with the understanding that the decomposition $\sqcup B_i$ is a refinement of the decomposition $\sqcup S_a$. Let us prove \eqref{eqn:equality} by induction on the total number of boxes in the strips $B_a$. Our algorithm prescribes that for any first chosen starting variable $z_{ia}$, a box of color $i$ must be at the bottom of the vertical strip $B_1$, which in turn must be at the top of the vertical strip $S_a$ for some $a \in \{1,...,k\}$. Therefore:
$$
S_a = S_{a}' \sqcup_i B_1
$$
where the symbol $\sqcup_i$ means that the vertical strip $S_{a}'$ is directly below $B_1$, and the bottom-most box of $B_1$ has color $\equiv i$ mod $n$. Therefore, the LHS of \eqref{eqn:equality} equals:
\begin{equation}
\label{eqn:equalness}
\frac 1{g} \sum^{1\leq a \leq t}_{S_a = S_a' \sqcup_i B_1} \quad \sum^{B_2 \sqcup ... \sqcup B_{t'} = }_{= S_1 \sqcup ... \sqcup S_a' \sqcup ... \sqcup S_t} \frac 1{\#_{B_2,...,B_{t'}}}
\end{equation}
where $g$ is the number of boxes of color $\equiv i$ mod $n$ in the collection $S_1 \sqcup ... \sqcup S_t$. By the induction hypothesis, all inner sums are equal to 1. Since the number of outer sums is precisely equal to $g$, we conclude that the entire \eqref{eqn:equalness} equals 1, as desired.

\end{proof}

\end{document}